\documentclass[11pt]{amsart}
\usepackage{amscd,amssymb,amsthm,amsmath,amssymb,mathrsfs,enumerate,enumitem,xfrac,txfonts}
\usepackage[matrix,arrow,curve]{xy}
\usepackage[margin=2cm]{geometry}
\usepackage{xcolor}
\usepackage{comment}
\usepackage[colorlinks=true, linkcolor=blue!70!black, urlcolor=purple, citecolor=blue!70!black]{hyperref}

\newtheorem{theorem}{Theorem}[section]
\newtheorem{proposition}[theorem]{Proposition}
\newtheorem{lemma}[theorem]{Lemma}
\newtheorem*{lemma*}{Lemma}
\newtheorem{corollary}[theorem]{Corollary}
\newtheorem{conjecture}[theorem]{Conjecture}

\newtheorem{problem}[theorem]{Problem}
\newtheorem*{maintheorem*}{Main Theorem}
\newtheorem*{corollary*}{Corollary}
\newtheorem*{conjecture*}{Conjecture}
\newtheorem*{theorem*}{Theorem}
\newtheorem{claim}{Claim}
\newtheorem*{question*}{Question}
\newtheorem*{proposition*}{Proposition}
\newtheorem*{remark*}{Remark}
\newtheorem*{maincorollary*}{Main Corollary}
\newtheorem{setup}[theorem]{Setup}

\theoremstyle{definition}
\newtheorem{example}[theorem]{Example}
\newtheorem*{example*}{Example}
\newtheorem{definition}[theorem]{Definition}
\newtheorem*{acknowledgement}{Acknowledgments}

\theoremstyle{remark}

\theoremstyle{plain}

\theoremstyle{definition}

\newcommand{\bC}{\mathbb{C}}

\newcommand{\bQ}{\mathbb{Q}}

\newcommand{\bF}{\mathbb{F}}
\newcommand{\bG}{\mathbb{G}}

\newcommand{\calL}{\mathcal{L}}

\newcommand{\calX}{\mathcal{X}}

\newcommand{\Proj}{\mathrm{Proj}}

\newcommand{\Pic}{\mathrm{Pic}}
\newcommand{\Aut}{\mathrm{Aut}}

\newcommand{\Spec}{\mathrm{Spec}\;}

\newcommand{\PP}{\mathbb{P}}
\newcommand{\bP}{\mathbb{P}}
\newcommand{\bA}{\mathbb{A}}

\newcommand{\SL}{\mathrm{SL}}

\newcommand{\calO}{\mathcal{O}}
\newcommand{\cO}{\mathcal{O}}

\newcommand{\calM}{\mathcal{M}}

\usepackage{graphicx}
\newcommand{\bZ}{\mathbb{Z}}

\newcommand{\Chow}{\operatorname{Chow}}

\newcommand{\cX}{\mathcal X}
\newcommand{\cD}{\mathcal D}

\newcommand{\CM}{\mathrm{CM}}
\newcommand{\CH}{\mathrm{CH}}

\newcommand{\cL}{\mathcal L}

\newcommand{\Fut}{\mathrm{Fut}}

\newcommand{\wt}{\mathrm{wt}}

\def\tiago#1{{\color{blue}\textbf{Tiago: }#1}}
\def\livia#1{{\color{teal}\textbf{Livia: }#1}}
\def\kristin#1{{\color{violet}\textbf{Kristin: }#1}}

\makeatletter\@addtoreset{equation}{section} \makeatother

\author{Livia Campo, 
Kristin DeVleming, Tiago Duarte Guerreiro}

\address{Institut f\"{u}r Mathematik \\
	Universit\"{a}t Wien \\
	Oskar-Morgenstern-Platz 1 \\
	1090 Wien \\
	Austria}
	
\email{livia.campo@univie.ac.at}

\address{Department of Mathematics \\
University of California, San Diego \\
2985 Muir Lane \\
La Jolla, CA  92093 \\
USA
}
	
\email{kedevleming@ucsd.edu}

\address{Departement Mathematik und Informatik (DMI) \\ Universit\"{a}t Basel \\
Spiegelgasse 1 \\
4051 Basel \\
Switzerland}
	
\email{tiago.duarteguerreiro@unibas.ch}

\title{K-stability of complete intersections}

\begin{document}

\begin{abstract}
    We prove the K-polystability of the general Fano complete intersection of arbitrary multidegree and dimension, and the K-stability of the general Fano complete intersection that is not isomorphic to projective space or a quadric hypersurface.  We prove analogous results for certain smooth weighted complete intersections.
\end{abstract}

\maketitle

\vspace{-.5cm}

\section{Introduction}

K-stability is an algebraic notion introduced in \cite{Tia97, Don02} to capture the existence of a \textit{K\"ahler-Einstein} metric, a type of constant scalar curvature metric, on Fano manifolds.  Indeed, by \cite{CDS1} and \cite{Tian15}, a smooth Fano manifold $X$ is K-polystable if and only if $X$ admits a K\"ahler-Einstein metric.  This result has been generalized to klt Fano varieties by \cite{LXZ22}. 

There have been great strides in K-stability theory in algebraic geometry over the past several decades, notably the construction of projective moduli spaces parameterizing K-polystable Fano varieties of fixed dimension and anticanonical volume.  However, in general, determining the K-(poly/semi)stability of a particular Fano manifold remains difficult.  In dimension 2, the situation is completely understood: there are ten deformation families of del Pezzo surfaces, and the K-stability of every member of each family has been determined: $\bP^2$, $\bP^1 \times \bP^1$, and smooth del Pezzo surfaces of degree 6 are K-polystable; the first Hirzebruch surface $\bF_1$ and del Pezzo surfaces of degree 7 are K-unstable; and smooth del Pezzo surfaces of degree $\le 5$ are K-stable.  In dimension 3, by the remarkable work \cite{thebook:CalabiProb} the K-(poly/semi)stability of the \textit{general} member of each deformation family of smooth Fano threefolds is known.  For most families, the K-stability of \textit{all} smooth members is difficult to determine but is near completion. In higher dimensions, much less is known.

The present paper investigates the K-stability of Fano hypersurfaces and complete intersections.  Denoting by $X$ the Fermat hypersurface of degree $d \le n$ in $\bP^n$, there are several methods to prove $X$ is K-polystable and in fact K-stable if $d \ge 3$.  For $d = 1,2$, as every smooth hypersurface is isomorphic to the Fermat, this proves all such $X$ are K-polystable. For $d \ge 3$, given a family of Fano varieties over a smooth base $B$, the set of K-(semi)stable fibers of the family is an open subset of $B$ by \cite{BLX19}.  Therefore, if $B = \bP(H^0(\bP^n, \calO(d)))$ is the parameter space of degree $d$ hypersurfaces in $\bP^n$ and $\calX \to B$ is the universal family, the K-stability of the Fermat hypersurface then implies the K-stability of the general hypersurface, which leads to the following folklore conjecture: 

\begin{conjecture}
	Every smooth Fano hypersurface of degree $d$ in $\bP^{n+1}$ is K-polystable, and K-stable if $d \ge 3$.
\end{conjecture}

This conjecture is known in very few cases.  By the previous paragraph, this is known for hypersurfaces of degree $d = 1,2$.   By \cite{LiuXu19,liu2022cubicfourfolds}, the result follows for cubic hypersurfaces in $\bP^{n+1}$ for $n \le 4$. In \cite{kentohyper}, Fujita proves the result for hypersurfaces of Fano index $1$, i.e. hypersurfaces of degree $n+1$ in $\bP^{n+1}$. Using a powerful new method to estimate the stability threshold, Abban and Zhuang showed in \cite{AZ21} that every smooth hypersurface of Fano index $2$, i.e. degree $n$ in $\mathbb P^{n+1}$, is K-stable for $n \ge 3$, and, building on their previous work, prove in \cite{AZ-Seshadri} that any smooth hypersurface $X$ in $\mathbb P^{n+1}$ such that $n\geq \iota_X^3\geq 9$ is K-stable, where $\iota_X$ is the Fano index of $X$.

The same folklore conjecture is made for complete intersections: 

\begin{conjecture}
	Assume $k \ge 2$ and $r_1, \dots, r_k \ge 2$.  Every smooth Fano complete intersection $X$ of multidegree $(r_1, \dots, r_k)$ in $\bP^{n+k}$ is K-stable.
\end{conjecture}

To the authors' knowledge, this result is known only in the case that $X$ is the intersection of two quadrics (\cite{AGP06}), the complete intersection of a quadric and cubic in $\mathbb P^5$ (\cite[Proposition 5.3]{ZhuangSR}), the complete intersection of three quadrics in $\bP^6$ (\cite[Theorem 5.1]{AZ-Seshadri}), and complete intersections of Fano index one of dimension $n$ in $\mathbb P^{n+k}$ such that $n\geq 10k$ (\cite[Theorem~1.3]{ZhuangSR}).

In general, there is remarkably little evidence for this conjecture.  The main result of this paper is the following theorem, providing evidence for the conjecture in all cases and giving a complete answer to \cite[Problem~8]{xu2026openproblemskstabilityfano} proposed by Xu and Zhuang:

\begin{theorem} \label{thm: index >=2}
   	If $X \subset \mathbb{P}^{n+k}$ is a general $n$-dimensional Fano complete intersection of type $(r_1, \dots, r_k)$, then $X$ is K-polystable.  If $X$ is not isomorphic to $\bP^n$ or a smooth quadric, then $X$ is K-stable.
\end{theorem}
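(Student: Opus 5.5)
By openness of K-(semi)stability in families (\cite{BLX19}) and the analogous openness of K-stability for families with discrete automorphisms, it is enough to exhibit one K-polystable (respectively K-stable) member of the family of complete intersections of type $(r_1,\dots,r_k)$. The K-polystable case needs care, since K-polystability is not open in general. However, the cases where $X$ could be K-polystable but not K-stable are exactly those where $X$ has infinite automorphism group. For general smooth complete intersections this happens only when $X\cong\bP^n$ (some $r_i=1$, all others $1$) or $X$ is a quadric. Both are homogeneous and known to be K-polystable. After discarding linear equations, we may therefore assume all $r_i\ge 2$ and $X$ is not a quadric; then we must produce a single K-stable member.

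The candidate is the Fermat-type complete intersection. Split the coordinates of $\bP^{n+k}$ into blocks and let $X_i$ be the Fermat hypersurface $\{\sum_{j\in B_i} x_j^{r_i}=0\}$ in the $i$-th block. That choice is degenerate, so I would instead use the standard degeneration trick via cyclic covers. The complete intersection $X=\{f_1=\dots=f_k=0\}$ with $f_i = x_{i}^{r_i} - g_i(y)$, taken in suitable coordinates, is an iterated $\mu_{r_i}$-cyclic cover of a lower-degree complete intersection (ultimately of $\bP^n$) branched along $\{g_i=0\}$. The K-stability of cyclic covers is controlled by K-stability of the log pair $(V,(1-\tfrac1{r})D)$ (Liu--Zhu, Zhuang's cyclic cover theorem). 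So inductively: $X$ is K-stable iff $(\bP^n,\sum_i(1-\tfrac1{r_i})D_i)$ is K-stable, where the $D_i$ are general hypersurfaces of degree $r_i$. Alternatively, the same conclusion follows from the Galois-cover criterion: $\delta$ is preserved under finite quasi-étale-log covers.

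The core step is thus proving K-stability of the log Fano pair $(\bP^n,\Delta)$ with $\Delta=\sum_i(1-\frac1{r_i})D_i$, $D_i$ general of degree $r_i$. The Fano condition for $X$, $\sum r_i\le n+k$, translates into $\sum(r_i-1)<n+1$, and this is exactly ampleness of $-(K+\Delta)$ with the right scaling. Here I would use interpolation and a further degeneration. Degenerate each $D_i$ to a union of $r_i$ general hyperplanes, or, better, to a configuration of coordinate-like hyperplanes. Lower semicontinuity of $\delta$ means that K-semistability of the limit pair $(\bP^n, \sum c_jH_j)$ with general hyperplanes gives K-semistability of the general pair. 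For hyperplane arrangements on $\bP^n$ there are explicit criteria (Fujita's computation for $\bP^n$ with hyperplane boundaries: for general hyperplanes with coefficients $c_j<1$, K-semistability holds iff each $c_j\le \frac{1}{n+1}\sum_l c_l$ type inequality and $\sum c_j<n+1$). With $c_j=1-\frac1{r_i}$, these conditions are easily checked because all coefficients are roughly equal. To upgrade to K-stability I would use that the general $(\bP^n,\Delta)$ has finite automorphism group once $\Delta$ is not a quadric, so K-semistable plus reduced uniform/openness arguments (or Zhuang's criterion that a K-semistable log Fano with discrete automorphisms degenerating to a K-polystable pair is K-stable for general member) gives K-stability.

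The main obstacle is this last hyperplane-arrangement step, particularly when the number of hyperplanes is small relative to $n+1$ (low degrees, high index). In that range the general arrangement has torus symmetry, and its K-polystability must be checked via the Futaki character, by balancing the weights. One must then carefully pass from K-polystability of a degenerate limit with $\bG_m$-actions to K-stability of the general pair. For this I would first try the theorem that the K-polystable degeneration is unique and isotrivial only if the general member has the same automorphisms.
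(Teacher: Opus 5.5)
\paragraph{The core step fails outside the small-index range.} Your reduction is sound. For $X=\{f_i(x_0,\dots,x_n)+x_{n+i}^{r_i}=0\}$ the quotient $X\to\bP^n$ by $\prod_i\mu_{r_i}$ is Galois. So $X$ is K-polystable iff $(\bP^n,\Delta)$ is, where $\Delta=\sum_i(1-\frac1{r_i})D_i$.

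The problem is how you propose to prove K-stability of $(\bP^n,\Delta)$. Degenerating each $D_i$ to $r_i$ general hyperplanes gives $(\bP^n,\sum_j c_jH_j)$ with $\sum_j c_j=n+1-\iota_X$. If $W$ is an intersection of $m$ of the $H_j$, then $\beta(\mathrm{ord}_W)=\frac{m}{n+1}\sum_j c_j-\sum_{H_j\supset W}c_j$.

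Already $W=H_j$, which is the inequality you quote, forces $1-\frac1{r_i}\le 1-\frac{\iota_X}{n+1}$ for all $i$, i.e.\ $\iota_X\le (n+1)\min_i\frac1{r_i}$. This condition implies all the others, so by Fujita's criterion it is exactly where the limit is K-semistable. It is precisely the small-index range of Theorem~\ref{thm:index1}.

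Outside that range the limit pair is K-unstable, and openness only transfers (semi)stability from the special fibre to the general one. So the degeneration tells you nothing about the general pair. Two concrete failures:
\begin{itemize}
\item For two quadrics in $\bP^{n+2}$ with $n\ge 4$, you get four hyperplanes of coefficient $\frac12$, and $\beta(\mathrm{ord}_{H_1})=\frac{2}{n+1}-\frac12<0$.
\item For cubic hypersurfaces of dimension $n\ge 3$, three hyperplanes of coefficient $\frac23$ already fail.
\end{itemize}

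Your proposed remedy of ``balancing the weights'' cannot help. When fewer than $n+1$ hyperplanes occur, or the coefficients differ, the toric limit has nonzero Futaki character, and that nonzero character is exactly the instability.

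Separately, K-semistable plus finite automorphism group does not imply K-stable. You would need strict inequalities (giving uniform K-stability) or a genuine polystability argument.

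\paragraph{The missing idea: one cover at a time over a base that is not $\bP^n$.}
\begin{itemize}
\item The paper writes $X=\{f_1=\dots=f_{k-1}=0,\ f_k+x_{n+k}^{r_k}=0\}$ with $f_1,\dots,f_{k-1}$ general in $x_0,\dots,x_{n+k-1}$.
\item Then $X$ is an $r_k$-fold cyclic cover of a general complete intersection $Y$ of codimension $k-1$. It is branched along $B_Y=Y\cap(f_k=0)$, a general complete intersection of dimension $n-1$.
\item By double induction on $(n,k)$, $Y$ is K-polystable, and so is $B_Y$ when it is Fano.
\item The Liu--Zhuang cone degeneration $(Y,B_Y)\rightsquigarrow(C_p(B_Y),B_Y)$, plus interpolation with $(Y,0)$, gives K-semistability of $(Y,cB_Y)$ for all $c<1-\frac{\iota_Y-r_k}{nr_k}$.
\item This bound always exceeds $1-\frac1{r_k}$, because $\iota_Y=\iota_X+r_k-1$ and $\iota_X\le n$.
\item If $B_Y$ is not Fano, the log Calabi--Yau pair $(Y,\frac{\iota_Y}{r_k}B_Y)$ plays the same role.
\item GIT polystability of $B_Y$ together with \cite{LZnonproportional} then upgrades K-semistability to K-polystability.
\end{itemize}
With a single branch divisor on a K-polystable base, this threshold is never an obstruction. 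With all $k$ divisors placed on $\bP^n$ at once, it is.
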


As the anticanonical volume of a complete intersection as above is $v = (n+k+1 - (r_1 + \dots + r_k))^n r_1 \dots r_k$ and small deformations of non-K3 complete intersections are complete intersections (\cite{Sernesi75}), this immediately gives: 

\begin{corollary}
	For any $n \ge 2$ and positive integers $k,r_1, \dots, r_k$ such that $n+k+1 > r_1 + \dots + r_k$, for $v =  (n+k+1 - (r_1 + \dots + r_k))^n r_1 \dots r_k$, the K-moduli space $M^K_{n,v}$ is non-empty and contains an irreducible component with generic point parameterizing a smooth complete intersection of type $(r_1, \dots, r_k)$ in $\bP^{n+k}$.
\end{corollary}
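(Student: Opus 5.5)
The plan is to deduce the corollary from Theorem~\ref{thm: index >=2}, using openness of K-semistability and the deformation theory of complete intersections.

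\textbf{Volume.} First I would record the volume. For a smooth complete intersection $X \subset \bP^{n+k}$ of type $(r_1,\dots,r_k)$, adjunction gives $-K_X = \calO_X(n+k+1-\sum r_i)$. This is ample exactly when $n+k+1>\sum r_i$. Since $\deg X = r_1\cdots r_k$, we get $(-K_X)^n = (n+k+1-\sum r_i)^n r_1\cdots r_k = v$.

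\textbf{A family and its image in K-moduli.} Let $U$ be the open subset of the product of projective spaces $\prod_i \bP(H^0(\bP^{n+k},\calO(r_i)))$ parametrizing smooth complete intersections, and let $\calX\to U$ be the universal family. Then $U$ is smooth and irreducible. By \cite{BLX19}, the K-semistable locus $U^{ss}\subset U$ is open. By Theorem~\ref{thm: index >=2} it contains a general point, so it is a dense open subset. Moreover, a general fiber is K-polystable, and in fact K-stable unless $X\cong \bP^n$ or a quadric. In those two exceptional cases every fiber is isomorphic, so all fibers are K-polystable. Hence $[X]$ defines a point of $M^K_{n,v}$, which shows $M^K_{n,v}\neq\emptyset$. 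The family $\calX|_{U^{ss}}$ induces a morphism $U^{ss}\to\calM^K_{n,v}$ to the K-moduli stack, and composing with the good moduli map gives a morphism to $M^K_{n,v}$. Its image $Z$ is irreducible because $U^{ss}$ is.

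\textbf{$Z$ is open, hence its closure is a component.} It remains to show that $Z$ is open in $M^K_{n,v}$, or at least contains a nonempty open subset of the space. I would argue at a general point $[X]\in U^{ss}$ with $X$ K-stable, so that $\Aut(X)$ is finite. Near such a point, $\calM^K_{n,v}$ is étale-locally the quotient of the versal deformation space $\mathrm{Def}(X)$ by a finite group.
\begin{itemize}
\item Since $X$ is a smooth Fano, $H^2(X,T_X)=H^2(X,\Omega_X^{n-1}\otimes\omega_X^{-1})=0$ by Akizuki--Nakano vanishing. So $\mathrm{Def}(X)$ is smooth.
\item By \cite{Sernesi75}, every small deformation of a complete intersection that is not a K3 surface is again a complete intersection of the same type. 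A Fano variety is never K3, so this applies. Hence $U\to\mathrm{Def}(X)$ is surjective near $[X]$.
\end{itemize}
Therefore a neighborhood of $[X]$ in $\calM^K_{n,v}$ consists of smooth complete intersections of type $(r_1,\dots,r_k)$. So $Z$ contains an open neighborhood of $[X]$ in $M^K_{n,v}$. An irreducible set containing a nonempty open subset of the space has closure equal to an irreducible component. Thus $\overline Z$ is an irreducible component of $M^K_{n,v}$, and its generic point parametrizes a smooth complete intersection of type $(r_1,\dots,r_k)$.

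For $\bP^n$ and the quadric, the same argument works with the group $\Aut(X)$ reductive rather than finite. Alternatively, one can note that these varieties are rigid, so their point is itself an isolated, hence open, point of the moduli space.

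\textbf{Main obstacle.} The main subtlety is the openness step. It requires identifying the local structure of the K-moduli stack with the deformation space, and then applying \cite{Sernesi75}, which is exactly where the non-K3 hypothesis is used. Everything else is formal once Theorem~\ref{thm: index >=2} is available.
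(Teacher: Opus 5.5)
Your proposal is correct and follows the same route as the paper, which deduces the corollary in one line from the volume formula, Theorem~\ref{thm: index >=2}, and Sernesi's result that small deformations of non-K3 complete intersections remain complete intersections \cite{Sernesi75}. You simply make explicit what the paper leaves implicit: the image of the irreducible parameter space of K-semistable complete intersections contains a nonempty open subset of $M^K_{n,v}$, which follows from unobstructedness of smooth Fano varieties and the local structure of the K-moduli stack at a K-stable point.
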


Here, $M^K_{n,v}$ is the K-moduli space of K-polystable Fano varieties of dimension $n$ and volume $v$ (cf. \cite{Chenyang}). A complete explicit description of the corresponding component of the K-moduli space is unknown except in few cases where K-stability is known to coincide with GIT stability, see e.g. \cite{OSS, SS17, LiuXu19, liu2022cubicfourfolds}.  In general, the K-moduli space does not coincide with GIT stability and very little is known about the boundary of these spaces. 

In fact, we prove a more general result than Theorem \ref{thm: index >=2} for weighted complete intersections. 

\begin{theorem} \label{thm:wted-version}
   	If $X \subset \mathbb{P}(1^{n+1},a_{n+1}, \dots, a_{n+k})$ is a general $n$-dimensional Fano weighted complete intersection of type $(r_1, \dots, r_k)$ such that $a_{n+i} \mid r_i$ for each $i$, then $X$ is K-polystable.  If $X$ is not isomorphic to $\bP^n$ or a smooth quadric, then $X$ is K-stable.
\end{theorem}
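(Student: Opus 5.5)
By openness of K-(semi)stability in families over a smooth base \cite{BLX19}, it suffices to exhibit one member of the family of weighted complete intersections of the given type which is K-stable (resp.\ K-polystable in the cases $\bP^n$ and the quadric). The plan is to degenerate to a special, highly symmetric member and apply an inductive argument on the number of equations $k$. The key structural observation is that when $a_{n+i}\mid r_i$, a general weighted complete intersection $X$ of type $(r_1,\dots,r_k)$ in $\mathbb{P}(1^{n+1},a_{n+1},\dots,a_{n+k})$ can be described as a cyclic cover. Concretely, one chooses the $i$-th equation of the form $y_i^{r_i/a_{n+i}} = f_i(x_0,\dots,x_n,y_1,\dots,y_{i-1})$, with $y_i$ the variable of weight $a_{n+i}$. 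When $a_{n+i}=1$ we may still arrange a Fermat-type equation $y_i^{r_i}=f_i$ after a generic linear change of coordinates. Eliminating the last variable exhibits $X$ as a $\mu_{m}$-cover, with $m=r_k/a_{n+k}$, of a weighted complete intersection $Y$ of type $(r_1,\dots,r_{k-1})$ in one fewer variable, branched along $Y\cap\{f_k=0\}$.

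We then use the known behavior of K-stability under cyclic covers (Liu--Zhu, Zhuang's equivariant results), which we take as available. If $(Y,\tfrac{m-1}{m}D)$ is a K-stable (resp.\ K-polystable) log Fano pair, where $D\in|\calO_Y(r_k)|$ is the branch divisor, then the cover $X$ is K-stable (resp.\ K-polystable). This reduces the problem to a log Fano statement about a complete intersection with fewer equations and a boundary of coefficient $1-\tfrac1m$. Induction then needs the stronger statement for pairs: for a general $Y$ and a general $D\in|\calO_Y(d)|$, the pair $(Y,cD)$ is K-stable for all $c$ in the range where it is log Fano, possibly excluding endpoints. To establish this, we use interpolation of $\delta$-invariants. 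It suffices to check K-semistability, or control of $\delta$, at the two extremes $c=0$ (the variety $Y$ itself, by induction) and $c$ near the log Calabi--Yau threshold, where $\delta\to\infty$ up to normalization because $(Y,c_{\max}D)$ is klt log Calabi--Yau with $D$ smooth. Convexity of $c\mapsto\delta$-type bounds then yields K-stability in between, with strictness coming from the finite automorphism group of a general pair. The base case is $(\bP^n,cH)$ for a general hypersurface $H$ of degree $d$, which is handled by the same interpolation starting from $\bP^n$ (K-polystable) and the klt log Calabi--Yau pair $(\bP^n,\tfrac{n+1}{d}H)$ when $d\ge n+1$. When $d\le n$, one instead realizes $H$ as a cyclic cover and iterates.

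The main obstacle is the interpolation step at $c=0$ when $Y$ is only K-polystable, namely $\bP^n$ or a quadric. Here $\delta(Y)=1$, so interpolation only gives K-semistability of $(Y,cD)$. To upgrade, we argue that the automorphism group of a general $(Y,D)$ is finite, and use that K-semistable pairs near the K-polystable endpoint with reductive (finite) automorphisms are K-stable via the equivariant criterion or the openness of K-stability of the interior. A second delicate point is the boundary coefficient $1-\tfrac1m$ landing strictly inside the log Fano range. This is equivalent to $X$ being Fano, i.e.\ $\sum r_i<n+\sum a_{n+i}$, and should follow by a direct computation. Finally, the statement about $\bP^n$ and quadrics is immediate from homogeneity.
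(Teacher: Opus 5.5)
There is a genuine gap. Your argument never handles the case where the branch divisor is Fano, and that is the main case. Write $\dim Y=n$, $-K_Y\sim\calO_Y(\iota_Y)$ and $D=B_Y\in|\calO_Y(d)|$ with $d=r_k$.

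\textbf{The pair statement fails when $d<\iota_Y$.} The inductive statement you formulate for pairs is false in this range. Testing against $D$ itself gives
\[ A_{(Y,cD)}(D)-S_{-K_Y-cD}(D)=1-c-\frac{\iota_Y-cd}{(n+1)d}<0 \quad\text{for } c>1-\frac{\iota_Y-d}{nd}. \]
For instance, $(\bP^n,cQ)$ with $Q$ a smooth quadric is K-unstable for $c>\frac{n+1}{2n}$, although it is log Fano for all $c<1$.

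Correspondingly, the upper endpoint of your interpolation does not exist. The log Calabi--Yau coefficient is $\iota_Y/d>1$, so that pair is not even log canonical.

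For $X\subset\bP^{n+k}$ one has $\iota_Y=\iota_X-1+r_k$, so $B_Y$ is Fano exactly when $\iota_X\ge 2$. Your scheme therefore only reaches index one (and, in the weighted setting, $\iota_X\le a_{n+k}$), i.e. roughly Theorem \ref{thm:index1} and Case 1 of Theorem \ref{thm:general-case-induction}. Even for $\iota_X=1$ the coefficient is exactly $1$, so $(Y,B_Y)$ is lc but not klt, contrary to what you assert. Your remark that for $d\le n$ one ``realizes $H$ as a cyclic cover and iterates'' gives stability of $H$, not of the pair $(\bP^n,cH)$.

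\textbf{The missing idea.} You must feed in the K-polystability of $B_Y$ itself. Since $B_Y$ has dimension $n-1$ and the same codimension $k$ as $X$, this needs a double induction on $(n,k)$, not induction on $k$ alone. The bridge from $B_Y$ to the pair is the Liu--Zhuang cone construction (Lemma \ref{lem:Liu-Zhuang-cone}):
\begin{enumerate}
\item K-polystability of $B_Y$ gives K-polystability of the cone pair $(C_p(B_Y,\calO(d)),(1-\frac{\iota_Y-d}{nd})B_Y)$.
\item $(Y,B_Y)$ specially degenerates to this cone pair, so openness gives K-semistability of $(Y,(1-\frac{\iota_Y-d}{nd})B_Y)$ (Proposition \ref{prop:KstabofYimpliesKstabofpair}).
\item You then interpolate with $Y$.
\end{enumerate}
The inequality to check is $1-\frac1m<1-\frac{\iota_Y-d}{nd}$, i.e. $\iota_X<(n+1)a_{n+k}$. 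It is not the log Fano condition you verify.

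\textbf{The upgrade step is also invalid as stated.} A K-semistable pair with finite automorphism group need not be K-stable. Finiteness of $\Aut$ only turns K-polystability into K-stability; for example, a general cubic surface with one $A_2$ point is strictly K-semistable with finite automorphism group. Openness only spreads K-semistability or K-stability from your special member, so you genuinely need K-polystability of the special pair $(Y,(1-\frac1m)B_Y)$.

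When $Y$ is K-stable this is harmless, since $\beta$ is affine in the boundary coefficient. When $Y\cong\bP^n$ or a quadric, the cases you flag, which do occur in the induction, an extra input is required. The paper argues as follows:
\begin{enumerate}
\item $B_Y$ is GIT polystable for the action of $\Aut(Y)$ (Lemmas \ref{lem:GITstability-hypersurfaces}, \ref{lem:GITstability-finiteAut}, \ref{lem:GIT-of-(2,d)ci}).
\item By \cite[Theorem 10.3]{LZnonproportional}, this makes $(Y,\epsilon B_Y)$ K-polystable for small $\epsilon$.
\item Interpolation with a nonzero K-polystable endpoint (Proposition \ref{prop:polystabilityofpair}) then gives K-polystability at coefficient $1-\frac1m$.
\end{enumerate}
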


To prove the main theorems, using the openness of K-stability in families, it suffices to find one K-stable complete intersection of each multidegree.  To do so, we prove several results on the stability of complete intersections of particular forms.  

\begin{theorem}[See Theorem \ref{thm:index1}]
	Let $k \ge 1$ and $n \ge 2$.  Let $X\subset \mathbb P(1^{n+1},a_{n+1},\ldots,a_{n+k})$ be a Fano weighted complete intersection of type $(r_1,\ldots,r_k)$, where $r_i \ge 2$ and  $a_{n+i} \, \vert \, r_i$ for each $i$, with the following form: 
	\begin{equation*}
		X\colon \begin{cases}
			f_1+x_{n+1}^{r_1/{a_{n+1}}}=0\\
			f_2+x_{n+2}^{r_2/{a_{n+2}}}=0\\
			\quad \vdots \\
			f_k+x_{n+k}^{r_k/{a_{n+k}}}=0\\
		\end{cases} \subset \mathbb P(1^{n+1},a_{n+1},\ldots,a_{n+k})
	\end{equation*}
	such that $f_i \in \mathbb C[x_0,\ldots, x_n]$ is a homogeneous polynomial of degree $r_i$ for $1\leq i\leq k $ and  $\sum B_i$ is snc where $B_i$ is the divisor in $\mathbb P^n$ corresponding to the vanishing of $f_i$. Let $q:=\min\Big\{\frac{a_{n+1}}{r_1},\ldots,\frac{a_{n+k}}{r_k}\Big\}$. 
    \begin{enumerate}
        \item If the Fano index $\iota_X$ satisfies
	\begin{equation*} 
	\iota_X < (n+1)q    
	\end{equation*}
	then $X$ is K-polystable and is K-stable if $\mathrm{Aut}(X)$ is finite.
    \item If the Fano index $\iota_X$ satisfies
	\begin{equation*} 
	\iota_X = (n+1)q    
	\end{equation*}
	and each $B_i$ is a GIT-polystable hypersurface of $\mathbb P^n$, then $X$ is K-polystable and is K-stable if $\mathrm{Aut}(X)$ is finite.
    \end{enumerate}
In particular, if $X$ as above is smooth such that $\iota_X\leq (n+1)q$ and is not isomorphic to $\mathbb P^n$ or a quadric, then it is K-stable.
\end{theorem}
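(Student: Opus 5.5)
The plan is to realize $X$ as a finite abelian Galois cover of $\mathbb P^n$ and reduce K-stability of $X$ to K-stability of a log Fano pair on $\mathbb P^n$. Projecting to the coordinates $x_0,\dots,x_n$ gives a morphism $\pi\colon X\to\mathbb P^n$. Set $m_i=r_i/a_{n+i}$. Then $\pi$ is the quotient by $G=\prod_i \mu_{m_i}$, where $\mu_{m_i}$ acts on $x_{n+i}$, and its branch divisor is $\sum B_i$ with ramification index $m_i$ along $B_i$. Because $\sum B_i$ is snc, $X$ is normal and klt, and Riemann--Hurwitz gives
\[
K_X=\pi^*\Bigl(K_{\mathbb P^n}+\sum_i\bigl(1-\tfrac{1}{m_i}\bigr)B_i\Bigr).
\]
Hence $X$ is K-(poly)stable if and only if the pair $(\mathbb P^n,\Delta)$ with $\Delta=\sum_i(1-a_{n+i}/r_i)B_i$ is K-(poly)stable. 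This is the finite-cover criterion for Galois covers (Liu--Zhu, Zhuang), and it also holds equivariantly for the $G$-action. A short computation gives $-(K_{\mathbb P^n}+\Delta)\sim_{\mathbb Q}\iota_X H$. For example, when all $a_{n+i}=1$, the index is $\iota_X=n+k+1-\sum r_i$, which matches $n+1-\sum(r_i-1)$.

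Next I would analyse the pair $(\mathbb P^n,\Delta)$ using the interpolation/coefficient technique. Each coefficient satisfies $1-a_{n+i}/r_i\ge 1-q$, and the degree of $\Delta$ is $n+1-\iota_X$. The plan is to compare $(\mathbb P^n,\Delta)$ with $(\mathbb P^n,cD)$ for a GIT-controlled or snc divisor $D$, and to estimate $\delta$ directly via Abban--Zhuang or via $\alpha$-invariants. The condition $\iota_X<(n+1)q$ is equivalent to each coefficient $c_i=1-1/m_i$ being strictly less than $1-\iota_X/(n+1)$. This is exactly the regime where the interpolation lemma for snc divisors applies: $\mathbb P^n$ is K-semistable, and adding an snc boundary $\sum c_iB_i$ with $-(K+\Delta)$ ample and coefficients below the threshold where $\mathbb P^n$'s $\beta$ of a toric divisor would turn negative gives a K-stable pair. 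Concretely, I would use $\beta_{(\mathbb P^n,\Delta)}(E)=A_{\mathbb P^n}(E)-\mathrm{ord}_E\Delta-\iota_X S_H(E)$ and bound $\mathrm{ord}_E(\Delta)$ by $\max_i c_i\cdot\mathrm{lct}$-type estimates, using the snc hypothesis and the inequality $S_H(E)\le \frac{n}{n+1}T_H(E)$ as in Fujita's arguments. This should yield $\delta(\mathbb P^n,\Delta)>1$, and hence K-stability of the pair and of $X$ once $\mathrm{Aut}$ is finite. In general one gets K-polystability, or one can simply prove uniform K-stability of the pair directly.

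In the equality case $\iota_X=(n+1)q$, the estimate only gives $\delta\ge1$, i.e. K-semistability. Polystability must then come from the special degenerations with $\beta=0$. These are induced by one-parameter subgroups of $\mathrm{PGL}_{n+1}$ acting on the $B_i$ with maximal coefficient, and for such test configurations the Futaki invariant equals, up to a positive constant, the Hilbert--Mumford weight of $B_i$. GIT-polystability of each $B_i$ then forces these degenerations to be product-type. I would phrase this through the Paul--Tian/Odaka-type correspondence for pairs $(\mathbb P^n,cB)$ at the critical coefficient $c=1-q$, where K-stability of the pair coincides with GIT stability of $B$.

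The final sentence follows by checking that smooth $X$ with finite $\mathrm{Aut}$ are exactly those not isomorphic to $\mathbb P^n$ or a quadric; here one uses that $\mathrm{Aut}$ of a smooth complete intersection is finite outside these cases. In the equality case, smooth hypersurfaces of degree $\ge3$ are GIT-stable. I expect the main obstacle to be the sharp $\delta$-estimate showing strict inequality exactly under $\iota_X<(n+1)q$, in particular controlling valuations centred along strata of $\sum B_i$.
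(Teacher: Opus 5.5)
\textbf{Verdict: there are two gaps.} Your reduction to the Galois quotient $(\mathbb{P}^n,\Delta)$ is right, with $\Delta=\sum_i c_iB_i$ and $c_i=1-\tfrac{a_{n+i}}{r_i}$. So is your reformulation of $\iota_X<(n+1)q$ as $\max_i c_i<1-\tfrac{\iota_X}{n+1}$. Both agree with the paper. (Note that $c_i\le 1-q$, not $\ge$: $1-q$ is the \emph{largest} coefficient.) However, the klt case stops exactly at the step you call the main obstacle, and the inequality you propose would not get you past it. Take $S_H(E)\le\tfrac{n}{n+1}T_H(E)$ together with the best available bound $T_H(E)\le A(E)$ (i.e.\ $\alpha(\mathbb{P}^n)=\tfrac1{n+1}$) and $\mathrm{ord}_E\Delta\le(\max_ic_i)A(E)$. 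This only yields $\beta(E)\ge A(E)\bigl(1-\max_ic_i-\tfrac{n}{n+1}\iota_X\bigr)$, which is vacuous as soon as $\iota_X\ge 1$ and some $c_i\ge\tfrac12$. The missing input is the K-semistability of $\mathbb{P}^n$ itself, $(n+1)S_H(E)\le A(E)$. Keeping your snc bound, this gives
\[
\beta_{(\mathbb{P}^n,\Delta)}(E)\ \ge\ A(E)\Bigl(1-\max_i c_i-\tfrac{\iota_X}{n+1}\Bigr),
\]
and the same two bounds give $\delta(\mathbb{P}^n,\Delta)\ge\frac{(n+1)(1-\max_ic_i)}{\iota_X}>1$ precisely when $\iota_X<(n+1)q$. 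This is the paper's argument in valuative form. With $\lambda=\tfrac{n+1}{n+1-\iota_X}$, the condition says the log Calabi--Yau pair $(\mathbb{P}^n,\lambda\Delta)$ is klt, hence K-stable (Theorem~\ref{odaka-CY}). Interpolating with $\mathbb{P}^n$ (Proposition~\ref{prop:k-interpolation}) and taking $c=1$ finishes the klt case.

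In the equality case, the gap is your claim that GIT-polystability of each $B_i$ forces the $\beta=0$ degenerations to be of product type. First, you assert that such degenerations come from one-parameter subgroups $\rho$ of $\mathrm{PGL}_{n+1}$; this needs an argument. One way is to write $\beta_{(\mathbb{P}^n,\Delta)}=(1-\tfrac1\lambda)\beta_{\mathbb{P}^n}+\tfrac1\lambda A_{(\mathbb{P}^n,\lambda\Delta)}$ and use that $\mathbb{P}^n$ is K-polystable. Even granting this, the same $\rho$ moves all the $B_i$ simultaneously. The Futaki invariant is then a positive combination $\sum_i c_i\,\mu(B_i,\rho)$ of the Hilbert--Mumford weights of \emph{all} the $B_i$, not only those of maximal coefficient. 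Its vanishing gives $\lim_{t\to0}\rho(t)\cdot B_i=g_i\cdot B_i$ for each $i$ separately. A product test configuration instead requires a single $g$ that works for all $i$, i.e.\ polystability of the boundary as a whole. That does not follow from polystability of the summands. For example, the triangles $xyz=0$ and $yz(x+y+z)=0$ in $\mathbb{P}^2$ are each polystable and both have weight $0$ for the $1$-PS with weights $(1,0,-1)$. Yet both degenerate to $xyz=0$, so the pair of them is not polystable. So the snc hypothesis must be used at this point, and your sketch never invokes it. The paper proceeds in three steps:
\begin{itemize}
\item it obtains GIT polystability of the whole divisor $\sum B_i$ from the snc hypothesis (Lemma~\ref{lem:sncquadrics}, for quadric components);
\item it deduces K-polystability of $(\mathbb{P}^n,\epsilon\Delta)$ for small $\epsilon$ from \cite[Theorem 10.3]{LZnonproportional};
\item it interpolates toward the log canonical, hence K-semistable, log Calabi--Yau pair $(\mathbb{P}^n,\lambda\Delta)$.
\end{itemize}
Relatedly, there is no correspondence identifying K-stability of $(\mathbb{P}^n,cB)$ with GIT stability of $B$ at the critical coefficient. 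At that coefficient, K-semistability is equivalent to log canonicity; the GIT comparison is a small-coefficient statement.
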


This implies the K-stability of the general Fano complete intersection of small index; in particular, the general complete intersection of index one will be K-polystable (see Corollary \ref{cor:index-one}). 

Without the condition on the Fano index, we have the following: 

\begin{theorem}[See Theorem \ref{thm:general-case-induction}]
	    For integers $n \geq 2$ and $k \geq 2$, let $X \subset \mathbb{P}(1^{n+1}, a_{n+1}, \dots, a_{n+k})$ be a Fano weighted complete intersection of type $(r_1, \dots, r_k)$ such that $a_{n+i} \mid r_i$ and dimension $\dim X=n$ with equations
	\begin{equation*}
		X\colon \begin{cases}
			f_1=0\\
			f_2=0\\
			\quad \vdots \\
			f_{k-1}=0 \\
			f_k+x_{n+k}^{r_k/a_{n+k}}=0\\
		\end{cases} 
	\end{equation*}
	where $f_1, \dots ,f_k \in \mathbb{C} \left[ x_0, \dots , x_{n+k-1} \right]$ are a regular sequence of homogeneous polynomials of degree $\deg f_i = r_i$. 
	Call $Y \subset \mathbb{P}(1^{n+1}, a_{n+1}, \dots, a_{n+k-1})$ the complete intersection of type $(r_1, \dots, r_{k-1})$ defined by 
	\begin{equation*}
		Y \colon \begin{cases}
			f_1=0\\
			f_2=0\\
			\quad \vdots \\
			f_{k-1}=0 \\
		\end{cases} 
	\end{equation*}
	Call $B_Y$ the complete intersection $Y \cap (f_k=0) \subset \mathbb{P}(1^{n+1}, a_{n+1}, \dots, a_{n+k-1})$.  Assume $Y$ is K-polystable. If $B_Y$ is Fano, assume that $B_Y$ is K-polystable, and if $B_Y$ is not Fano, assume that the pair $(Y, \frac{n+1+\sum_{i=1}^{k-1} a_{n+i}-\sum_{i=1}^{k-1} r_i}{r_k}B_Y)$ is log canonical. Then, $X$ is K-polystable. If $\Aut(X)$ is finite, then $X$ is K-stable.
\end{theorem}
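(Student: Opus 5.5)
The approach is to realize $X$ as a cyclic cover of $Y$ branched along $B_Y$, and then to pass K-polystability from $Y$ and $B_Y$ up to $X$ through a degeneration to the cone over $B_Y$.

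\textbf{Step 1: the cyclic cover.} Set $m = r_k/a_{n+k}$. The projection from the point $P_{n+k}=[0:\dots:0:1]$ gives a morphism $\pi\colon X \to Y$; it is a morphism because $P_{n+k}\notin X$, as the last equation forces $x_{n+k}=0$ once all other coordinates vanish. The map $\pi$ is the $\mu_m$-cyclic cover of $Y$ branched along $B_Y$, defined by $x_{n+k}^m = -f_k$. Riemann--Hurwitz gives
\[ \pi^*\Big(K_Y + \big(1-\tfrac1m\big)B_Y\Big) = K_X. \]
It is convenient to write $-K_Y = \iota_Y H$ with $\iota_Y = n+1+\sum_{i<k} a_{n+i} - \sum_{i<k} r_i$, and $B_Y \sim r_k H$. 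By the equivariant finite-cover results for K-stability (Liu--Zhu, Zhuang), $X$ is K-polystable if and only if the log Fano pair $(Y,(1-\tfrac1m)B_Y)$ is $\mu_m$-equivariantly K-polystable, which is implied by plain K-polystability. The task therefore reduces to showing that $(Y, cB_Y)$ is K-polystable for $c = 1-\tfrac1m$. This pair is log Fano because $-K_X$ is ample.

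\textbf{Step 2: interpolation.} The pair $(Y,0)$ is K-polystable by hypothesis. Next I need a second endpoint, namely K-semistability of $(Y, c_0 B_Y)$ at the value $c_0 = \iota_Y/r_k$ where $K_Y + c_0B_Y \equiv 0$ (when that value is admissible). If $c_0 \le 1$, the pair $(Y,c_0B_Y)$ is log Calabi--Yau, and its log canonicity is the hypothesis in the non-Fano case. When $B_Y$ is Fano, its K-polystability gives K-semistability of the cone: by adjunction and the cone construction (Li--Liu--Xu, Zhuang's degeneration), $(Y, cB_Y)$ degenerates to the projective cone $C_p(B_Y, H|_{B_Y})$ with the boundary at infinity. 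This pair is K-semistable precisely for a specific coefficient $c_1$, determined by $-K_{B_Y} = (\iota_Y - r_k)H|_{B_Y}$.

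Interpolation of $\delta$ then applies: $\delta$ is concave in the coefficient (Dervan, Zhuang), and in the polystable version one uses the interpolation theorem for K-polystability of pairs (Ascher--DeVleming--Liu). This gives K-polystability of $(Y,cB_Y)$ for every $c$ strictly between $0$ and the upper endpoint, and K-stability whenever $\Aut$ is finite. It remains to check that $c = 1-\tfrac{a_{n+k}}{r_k}$ lies in this range. The Fano condition on $X$ is
\[ \iota_X = \iota_Y + a_{n+k} - r_k > 0, \]
which rearranges to $1-\tfrac{a_{n+k}}{r_k} < \tfrac{\iota_Y}{r_k}$ in the log Calabi--Yau case. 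In the cone case one compares against $c_1$ directly.

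\textbf{Step 3: upgrades.} The same argument gives K-stability. When $\Aut(X)$ is finite, $\Aut(Y,B_Y)$ is finite as well, so the interior point is K-stable; alternatively, the finite-cover correspondence preserves K-stability once the group is finite. This upgrades K-polystability to K-stability.

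The main obstacle I expect is the endpoint in the case where $B_Y$ is Fano. There one must produce the degeneration of $(Y,cB_Y)$ to the cone over $B_Y$ as a special test configuration, namely the deformation to the normal cone of $B_Y \subset Y$, and verify that the coefficient where the cone is K-semistable lies at or beyond $1-\tfrac1m$. The first thing to try is to run the argument of Li--Liu--Xu for cones over K-polystable bases, applied inside the weighted projective space. I would also argue by induction on $k$, starting from a hypersurface of the form $f+x^m=0$, so that the degeneration is explicit: it is the family $x_{n+k}^m + t f_k$.
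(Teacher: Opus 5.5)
Your architecture is the paper's: $X\to Y$ is the $\mu_m$-cover branched along $B_Y$, Liu--Zhu reduces the problem to $(Y,(1-\frac1m)B_Y)$, and you interpolate against a semistable endpoint coming from the cone over $B_Y$ or from the log Calabi--Yau pair. The gap is in Step 2, where you claim interpolation gives K-polystability of $(Y,cB_Y)$ on the open interval. It does not. Proposition~\ref{prop:k-interpolation} propagates polystability only from a K-polystable endpoint $(Y,D)$ with $D\neq 0$. With $(Y,0)$ as the polystable endpoint it gives only K-semistability; this is part (2).

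Your other endpoint is also only K-semistable. In the Fano case, $(Y,c_1B_Y)$ merely degenerates to the polystable cone. In the non-Fano case, log canonicity gives only K-semistability, via Proposition~\ref{prop:KYnefimpliesKstabofpair}. So your inputs prove only that $X$ is K-semistable. A simple example shows this loss is real. On $\bP^1$ take $B=2p_1+p_2+p_3$. Then $(\bP^1,0)$ is K-polystable and $(\bP^1,\frac12B)$ is an lc log Calabi--Yau pair. Yet $(\bP^1,cB)$ is strictly K-semistable for every $c\in(0,\frac12)$: it degenerates to $(\bP^1,2c\,p_1+2c\,q)$. Step 3 inherits the gap, since finiteness of $\Aut$ upgrades K-polystability to K-stability, not K-semistability.

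The missing ingredient is a K-polystable pair $(Y,\epsilon B_Y)$ for $0<\epsilon\ll 1$, to serve as the $D\neq0$ endpoint. The paper gets this in Proposition~\ref{prop:polystabilityofpair} from two facts:
\begin{itemize}
\item $[B_Y]$ is GIT polystable for the action of $\Aut(Y)$, which is reductive by \cite{ABHLX} because $Y$ is K-polystable;
\item \cite[Theorem 10.3]{LZnonproportional} upgrades K-semistability of $(Y,\epsilon B_Y)$ to K-polystability under that GIT condition.
\end{itemize}
In the Fano case, GIT polystability follows from K-polystability of $B_Y$ by Lemma~\ref{lem:KimpliesGIT}, the Paul--Tian CM-weight argument. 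In the non-Fano case it is \emph{not} a consequence of the log canonicity hypothesis. The paper takes it from Lemma~\ref{lem:GITstability-finiteAut}, using that $\Aut(Y)$ is finite, or from Lemma~\ref{lem:GIT-of-(2,d)ci} when $Y$ is a quadric. So your argument needs this extra input and cannot close using only the hypotheses you list.

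Two smaller points:
\begin{itemize}
\item The Fano-case comparison you leave open is automatic. The inequality $1-\frac{a_{n+k}}{r_k}<1-\frac{\iota_Y-r_k}{nr_k}$ is equivalent to $\iota_X<(n+1)a_{n+k}$. This holds because $\iota_X\le n+1-a_{n+k}$ once $r_k\ge 2a_{n+k}$.
\item The degeneration of $(Y,B_Y)$ to the cone, which you flag as the main obstacle, is available from \cite[Lemma 2.12]{LiuZhuangSharpness} together with openness of K-semistability. It needs no new construction.
\end{itemize}
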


The previous result allows us to inductively show the K-stability of the general complete intersection.  Indeed, assuming the $f_i$ are general, we may assume $X, Y, B_Y$ are smooth and inductively we may suppose the assumptions on $Y$ and $B_Y$ hold.  We then conclude $X$ is K-polystable.  When $a_{n+1} = \dots = a_{n+k} = 1$, by \cite{LW84} or \cite{benoist2013separation}, the automorphism group of $X$ is finite, so we conclude that a general Fano complete intersection is K-stable, which implies Theorem \ref{thm: index >=2}.  

While the results of the previous two theorems overlap, we include both results as their applications are distinct.  For example, the first result allows us to write down many examples of Fano complete intersections of small index that are K-stable by simply writing polynomials $B_i = (f_i = 0)$ such that $\sum B_i$ is snc.  The second result is more suitable for inductive proofs: use this in explicit examples would require first verifying polystability properties of the varieties $Y$ and $B_Y$.  The proofs also proceed along different methods.   

We also derive several additional corollaries on K-stability of weighted hypersurfaces and complete intersections of particular forms.  First, we provide an algebraic proof of the following result in \cite{AGP06}. See also \cite[Corollary~4.17]{ZhuangEquivariant}.

\begin{corollary}[See Corollary \ref{cor:int-of-2-quadrics}]
    Let $X \subset \bP^{n+2}$ be any smooth Fano complete intersection of two quadrics, $n \ge 2$.  Then, $X$ is K-stable.
\end{corollary}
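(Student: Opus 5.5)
The plan is to deduce this from Theorem~\ref{thm:general-case-induction} with $k=2$, all weights equal to $1$, and $r_1=r_2=2$. A smooth complete intersection of two quadrics in $\bP^{n+2}$ can be simultaneously diagonalized: $X=(\sum x_i^2=0)\cap(\sum \lambda_i x_i^2=0)$ with pairwise distinct $\lambda_i$. Since $n\ge 2$, we have $\iota_X=n+3-4=n-1\ge 1$, so $X$ is Fano. I would first change coordinates to put $X$ in the form of Theorem~\ref{thm:general-case-induction}. Subtracting $\lambda_{n+2}$ times the first equation from the second eliminates $x_{n+2}^2$ from it. The first equation then becomes $f_2+x_{n+2}^2=0$ with $f_2=\sum_{i\le n+1}x_i^2$, and the second becomes $f_1=\sum_{i\le n+1}(\lambda_i-\lambda_{n+2})x_i^2=0$, which involves only $x_0,\dots,x_{n+1}$. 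After reordering, $f_1=0$ and $f_2+x_{n+2}^2=0$ are exactly of the required shape.

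Here $Y=(f_1=0)\subset\bP^{n+1}$ is a quadric whose coefficients $\lambda_i-\lambda_{n+2}$ are nonzero for $i\le n+1$, so $Y$ is a smooth quadric. It is therefore homogeneous, and hence K-polystable, as recalled in the introduction. The divisor $B_Y=Y\cap(f_2=0)$ is a complete intersection of two quadrics in $\bP^{n+1}$. It is smooth because its pencil is diagonal with distinct eigenvalues $\lambda_i-\lambda_{n+2}$ (nonzero and distinct for $i\le n+1$). So $B_Y$ is again a smooth complete intersection of two quadrics, now of dimension $n-1$.

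This sets up an induction on $n$. The constant in the log canonical alternative is $(n+1+1-2)/2=n/2$. If $n-1\ge 2$, then $B_Y$ is Fano and K-stable by the inductive hypothesis, hence K-polystable. The base case $n=2$ gives $B_Y$ a curve: four points in $\bP^1$ cut by two conics. These four points are distinct, and the relevant coefficient is $n/2=1$. So I would need $(Y,B_Y)$ log canonical, where $Y\cong\bP^1\times\bP^1$ and $B_Y$ is a smooth elliptic curve of class $(2,2)$. This holds because $B_Y$ is a smooth divisor on a smooth surface. Alternatively, one can start the induction at $n=2$ with the known K-stability of the degree $4$ del Pezzo surface.

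Theorem~\ref{thm:general-case-induction} then gives that $X$ is K-polystable, and K-stable once $\Aut(X)$ is finite. The last step, which I expect to be the main point to check, is finiteness of $\Aut(X)$ for every smooth $X$, not just a general one. I would argue that any automorphism preserves $-K_X=\calO(n-1)$. Since $n\ge2$, Lefschetz gives $\Pic X=\bZ H$, so the automorphism preserves $\calO_X(1)$, and by projective normality it is induced from $PGL_{n+3}$ preserving the pencil of quadrics. The connected component of the identity acts trivially on the pencil's finitely many singular members, the cones over the $n+3$ distinct points $\lambda_i$. So it fixes each vertex, hence is diagonal, and it preserves $\sum x_i^2$ up to scale, which leaves only finitely many sign choices. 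Equivalently, one can cite \cite{benoist2013separation}. Hence $\Aut(X)$ is finite and $X$ is K-stable.
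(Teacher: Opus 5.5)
Your proposal is correct and is essentially the paper's proof. It uses the same simultaneous diagonalization and elimination of $x_{n+2}^2$, then Theorem~\ref{thm:general-case-induction} with $Y$ a smooth quadric and $B_Y$ a smooth intersection of two quadrics of dimension $n-1$, then induction on $n$ and finiteness of $\Aut(X)$. The paper differs only in starting the induction at $n=2$ with \cite{MM90} (your stated alternative) rather than the log canonical branch, and in citing \cite{LW84,benoist2013separation} instead of arguing $\Aut(X)$ is finite directly. There are two harmless slips: for $n=2$, $B_Y$ is the elliptic quartic curve in $\bP^3$, not four points in $\bP^1$; and $\Pic X\neq\bZ H$ for the degree $4$ del Pezzo surface, though there $\calO_X(1)=\calO_X(-K_X)$ is preserved by automorphisms anyway.
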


We also prove the following for intersections of three quadrics: 

\begin{theorem}[See Theorem \ref{three-quadrics}]
	Let $X$ be smooth complete intersection of three quadrics $X = Q_1 \cap Q_2 \cap Q_3 \subset \bP^7$ such that $Q_1$ and $Q_2$ have a common singular point.  Then, $X$ is K-stable.
\end{theorem}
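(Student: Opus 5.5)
This should follow from the general inductive result (Theorem \ref{thm:general-case-induction}) with $k=3$, all weights equal to $1$ and $r_i=2$. The key is to change coordinates so that $X$ has the form required there. Let $p\in\bP^7$ be a common singular point of $Q_1$ and $Q_2$, and choose coordinates with $p=[0:\dots:0:1]$, so that $x_7$ is the last coordinate. Since $Q_1$ and $Q_2$ are singular at $p$, their equations $f_1,f_2$ do not involve $x_7$: a quadric singular at $p$ is a cone with vertex $p$. The third quadric cannot pass through $p$, since otherwise $p\in X$ and $X$ would be singular at $p$ (the tangent spaces of $Q_1,Q_2$ at $p$ are the whole of $\bP^7$, so three equations could not cut out a codimension-3 smooth germ). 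Hence $Q_3$ contains the monomial $x_7^2$ with nonzero coefficient. Completing the square in $x_7$, and changing only the $x_7$ coordinate (which leaves $f_1,f_2$ unchanged), we get $Q_3=f_3+x_7^2$ with $f_3\in\bC[x_0,\dots,x_6]$. So $X$ has exactly the shape in Theorem \ref{thm:general-case-induction} with $n=4$ and $k=3$.

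Next we identify $Y$ and $B_Y$. Here $Y=(f_1=f_2=0)\subset\bP^6$ is a complete intersection of two quadrics of dimension $4$, and $B_Y=Y\cap(f_3=0)$ is a threefold complete intersection of three quadrics in $\bP^6$. Projection from $p$ exhibits $X$ as a double cover of $Y$ branched along $B_Y$. We then show $Y$ and $B_Y$ are smooth. Suppose $y\in Y$ is a point where $df_1,df_2$ are dependent. The line through $p$ and $y$ meets $X$ in the points with $x_7^2=-f_3(y)$. At such a point the Jacobian of $X$ has last column $(0,0,2x_7)$ and rank at most $2$, so $X$ would be singular there, a contradiction. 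Hence $Y$ is smooth. Similarly, at a singular point of $B_Y$ the preimage point has $x_7=0$ and the three gradients are dependent, so $X$ would be singular there too. Hence $B_Y$ is smooth.

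Now we verify the hypotheses of Theorem \ref{thm:general-case-induction}. $Y$ is a smooth intersection of two quadrics, so it is K-polystable (indeed K-stable) by Corollary \ref{cor:int-of-2-quadrics}. $B_Y$ is a smooth Fano threefold, namely the intersection of three quadrics in $\bP^6$, of index $1$ and degree $8$. Its K-stability is known: every smooth member of this family is K-stable (\cite[Theorem 5.1]{AZ-Seshadri}, or the Fano threefold book \cite{thebook:CalabiProb}). Theorem \ref{thm:general-case-induction} therefore gives that $X$ is K-polystable. Finally, $X$ is a smooth complete intersection of three quadrics, neither a quadric nor projective space, so $\Aut(X)$ is finite by \cite{benoist2013separation}. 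Hence $X$ is K-stable.

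The main obstacle is the last input: we need K-polystability for every smooth $B_Y$, not just a general one, and this relies on the cited result for all smooth complete intersections of three quadrics in $\bP^6$. If that cannot be cited, the fallback is a direct $\delta$-invariant estimate for $B_Y$, for instance by Abban–Zhuang adjunction to a hyperplane section K3. The coordinate normalization and the smoothness transfer are routine, apart from the tangent-space argument ruling out $p\in X$.
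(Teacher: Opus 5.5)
Your proposal is correct and follows essentially the paper's proof. You normalize $p=[0:\dots:0:1]$, complete the square in $Q_3$, and transfer smoothness from $X$ to $Y=\overline{Q}_1\cap\overline{Q}_2$ and to $B_Y=\overline{Q}_1\cap\overline{Q}_2\cap\overline{Q}_3$. You then combine Corollary~\ref{cor:int-of-2-quadrics} and \cite{AZ-Seshadri} with the double-cover criterion. The only difference is cosmetic: you go through Theorem~\ref{thm:general-case-induction} instead of applying Theorem~\ref{thm:kstab-of-cover} directly, and the former is just a wrapper for the latter here.
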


Several additional corollaries appear in Section \ref{sec:covers}.

\begin{acknowledgement}
    We thank Ivan Cheltsov, Elizabeth Gross, and Julius Ross for organizing the 2025 working group on \textit{Polarized varieties and their applications} at which this collaboration began.  We also thank Hamid Abban, Ivan Cheltsov, Kento Fujita, Yijue Hu, and Patricio Gallardo for helpful conversations.  During the preparation of this manuscript, we were informed that Yijue Hu obtained the same results on K-stability of complete intersections of Fano index one, which will appear in forthcoming work. LD was partially supported by \"{O}FG International Communications Grant No. IK-1446. KD was supported by NSF grant DMS-2550445 (formerly DMS-2302163). TDG was supported by ERC StG Saphidir No. 101076412. 
\end{acknowledgement}

\section{Properties of hypersurfaces and complete intersections}

In what follows, we will use the following notation.  A \textbf{hypersurface} of degree $d$ in $\bP^{n+1}$ will be denoted by $X_d \subset \bP^{n+1}$.  The variety $X_d$ has $\dim X_d = n$ and the canonical divisor $X_d$ is given by $\calO(K_{X_d}) \cong \calO_{\bP^{n+1}}(-n-2+d)|_{X_d}$.  Observe that $X_d$ is Fano if and only if $d \le n+1$.  

A \textbf{complete intersection} of multidegree $(r_1, \dots, r_k)$ in $\bP^{n+k}$ is a variety of dimension $n$ denoted $X_{r_1, \dots,r_k} \subset \bP^{n+k}$ defined by the vanishing of a regular sequence of $k$ polynomials $f_1, \dots, f_k \in \bC[x_0, \dots, x_{n+k}]$ such that each $f_i$ is a homogeneous polynomial of degree $r_i$.  The canonical divisor of the variety $X_{r_1, \dots,r_k}$ is given by $\calO(K_{X_{r_1,\dots,r_k}}) \cong \calO_{\bP^{n+1}}(-n-k-1+r_1+\dots+r_k)|_{X_{r_1,\dots,r_k}}$ and $X_{r_1, \dots,r_k}$ is Fano if and only if $r_1 + \dots + r_k \le n+k$.

\begin{definition}
    Let $X$ be a Fano variety such that $\calO_X(K_X)$ is Cartier.  The \textbf{Fano index} of $X$ is
    \[ \iota_X = \max_{m \in \bZ \ge 0} \{ -K_X \sim mH \mid H \in \Pic(X) \}.\]

    For a Fano hypersurface $X_d\subset \bP^{n+1}$, the Fano index is $\iota_{X_d} = n+2 - d$, and for a Fano complete intersection $X_{r_1, \dots, r_k} \subset \bP^{n+k}$, the Fano index is $ \iota_{X_{r_1, \dots, r_k}} = n+k+1 - r_1 - \dots - r_k$.
\end{definition}

In most cases, the automorphism groups of smooth hypersurfaces and complete intersections are finite: 

\begin{theorem}\cite{MM63}
    Let $X_d \subset \bP^{n+1}$ be a smooth degree $d \ge 2$ hypersurface of dimension $\ge 2$ that is not a K3 surface or a quadric hypersurface.  Then, $\Aut(X_d)$ is finite.
\end{theorem}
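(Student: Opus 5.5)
The plan is the classical Matsumura--Monsky argument, which reduces finiteness of $\Aut(X_d)$ to two facts: every automorphism is linear, and $X_d$ has no nonzero infinitesimal linear automorphisms.

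\emph{Step 1: automorphisms are linear.} By the Lefschetz hyperplane theorem, for $n \ge 3$ we have $\Pic(X_d) \cong \bZ$, generated by $H = \calO(1)|_{X_d}$. In the surface case $n=2$ with $d \ne 4$, the canonical class $K_{X_d} = (d-4)H$ is a nonzero multiple of $H$. In either case every automorphism preserves $H$: for $n\ge 3$ because $H$ is the ample generator, and for $n=2$ because it preserves $K$ and $\Pic$ is torsion free. Since $X_d$ is projectively normal, $H^0(\bP^{n+1},\calO(1)) \to H^0(X_d,H)$ is an isomorphism. Hence every automorphism is induced by an element of $\mathrm{PGL}_{n+2}$ stabilizing $X_d$. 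So $\Aut(X_d)$ is a closed subgroup $G \subset \mathrm{PGL}_{n+2}$, in particular a linear algebraic group, and it suffices to show $\dim G = 0$.

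\emph{Step 2: vanishing of the Lie algebra.} Write $X_d = (F=0)$. An element of $\mathrm{Lie}(G)$ is represented by a matrix $A=(a_{ij}) \in \mathfrak{gl}_{n+2}$, modulo scalars, such that
\[ \sum_{i,j} a_{ij} x_j \frac{\partial F}{\partial x_i} = \lambda F \]
for some $\lambda \in \bC$. By Euler's formula $F = \frac1d \sum x_i \partial_i F$, so after subtracting $\frac{\lambda}{d} \mathrm{Id}$ we may assume $\lambda = 0$. We then get a relation $\sum_i L_i \,\partial_i F = 0$ in degree $d$, with $L_i = \sum_j a_{ij}x_j$ linear forms.

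Smoothness of $X_d$ means $\partial_0 F, \dots, \partial_{n+1} F$ form a regular sequence of forms of degree $d-1$. All syzygies of a regular sequence are Koszul, so in a relation $\sum L_i \partial_i F = 0$ each $L_i$ lies in the ideal generated by the $\partial_j F$ with $j \ne i$. Those generators have degree $d-1$. If $d \ge 3$, then $d-1 \ge 2 > 1 = \deg L_i$, so every $L_i = 0$, hence $A = 0$ modulo scalars and $\mathrm{Lie}(G) = 0$. Since we are in characteristic zero, $G$ is then finite.

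\emph{Step 3: the excluded cases.} For $d=2$ the argument genuinely fails: quadrics have $\mathrm{PO}$ symmetry. For a K3 surface ($n=2$, $d=4$), $\Aut$ can be infinite discrete; moreover $K=0$, so Step 1 breaks. Our hypotheses exclude both. In the case $n=2$, $d=3$ (cubic surfaces), Step 1 works as stated: $-K=H$ is preserved. The main obstacle is Step 1 in low dimension, specifically making sure $H$ is canonically determined. Step 2 is a direct Koszul-complex degree count, with the degree $d-1 \ge 2$ being exactly what is needed.
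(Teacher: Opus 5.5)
Your proposal is correct. The paper gives no proof of its own and simply cites Matsumura--Monsky, and your argument is the standard characteristic-zero form of their proof: linearity of automorphisms (via $\Pic(X_d)=\bZ H$ for $n\ge 3$, or $K_{X_d}=(d-4)H$ with torsion-free $\Pic$ for $n=2$, together with projective normality), followed by vanishing of the Lie algebra of the linear stabilizer from the Koszul syzygies of the regular sequence $\partial_0F,\dots,\partial_{n+1}F$ and the degree count $d-1\ge 2$.
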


\begin{theorem}\cite{LW84,benoist2013separation}
    Let $X_{r_1,\dots,r_k} \subset \bP^{n+k}$ be a smooth complete intersection of dimension $\ge 2$ of multidegree $(r_1, \dots, r_k)$ with $r_i \ge 2$ for all $i$ that is not a K3 surface or a quadric.  Then, $\Aut(X_{r_1,\dots,r_k})$ is finite.
\end{theorem}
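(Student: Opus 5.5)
The statement is the finiteness of $\Aut(X)$ for a smooth complete intersection $X=X_{r_1,\dots,r_k}\subset\bP^{n+k}$ of dimension $n\ge 2$, with all $r_i\ge 2$, excluding K3 surfaces and quadrics. The plan is to reduce to a vanishing statement for vector fields. First we show $H^0(X,T_X)=0$; then the identity component $\Aut^0(X)$, whose Lie algebra is $H^0(X,T_X)$, is trivial. Next we show $\Aut(X)$ is a linear algebraic group, so $\Aut(X)/\Aut^0(X)$ is finite. The linearity step is easy. By Lefschetz, $\Pic(X)=\bZ\cdot\calO_X(1)$ when $n\ge 3$; the case $n=2$ is handled separately using the canonical class. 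In either case $\calO_X(1)$ is preserved by every automorphism, up to the cases where $K_X$ is a multiple of the hyperplane class. So every automorphism preserves the polarization and extends to $\bP^{n+k}$, because $X$ is projectively normal and linearly normal. Hence $\Aut(X)$ is the closed subgroup of $\mathrm{PGL}_{n+k+1}$ stabilizing $X$, an algebraic group with finitely many components.

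For the vector fields, we use the normal bundle sequence and the Euler sequence restricted to $X$:
\[0\to T_X\to T_{\bP^{n+k}}|_X\to \textstyle\bigoplus_i\calO_X(r_i)\to 0,\qquad 0\to\calO_X\to\calO_X(1)^{n+k+1}\to T_{\bP^{n+k}}|_X\to 0.\]
Thus $H^0(T_X)$ is the kernel of $H^0(T_{\bP}|_X)\to\bigoplus H^0(\calO_X(r_i))$. Since $H^1(\calO_X)=0$ for $n\ge2$, $H^0(T_{\bP}|_X)=\mathfrak{gl}_{n+k+1}/\bC$. The map sends a matrix $A$ to the classes of $(\sum_j (Ax)_j\partial_j f_i)_i$ modulo the ideal. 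A global vector field tangent to $X$ therefore corresponds to $A$ with $A\cdot f_i\in (f_1,\dots,f_k)_{r_i}$ for all $i$. So we must show that any such $A$ is a scalar.

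The main obstacle is this last step. Here I would follow Benoist's argument rather than a direct computation. Write the condition as $\sum_j (Ax)_j \partial_j f_i=\sum_l g_{il}f_l$. Using the Jacobian criterion, smoothness makes the partials $\partial_j f_i$ together with the $f_l$ have no common zero, so the Koszul complex on them is exact in the relevant degrees. This identifies solutions $A$ with Euler-type relations. The degree bookkeeping, using $r_i\ge2$, forces $A$ to be a multiple of the identity, plus terms coming from syzygies. Those syzygy terms exist only in degrees that occur precisely for quadrics (where $\mathfrak{so}$ survives) or for the cases where $T_X$ is forced otherwise. Alternatively, when $K_X$ is nef, i.e. $X$ is not Fano, $H^0(T_X)=0$ follows from Kodaira–Nakano-type vanishing for $K_X$ ample or from Bochner for Calabi–Yau non-K3. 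In the Fano range we need the algebraic Jacobian-ring argument, which I expect to be the delicate part, especially controlling low-degree syzygies for mixed multidegrees containing several $2$'s. Since the paper cites \cite{LW84,benoist2013separation}, I would ultimately invoke those references for this vanishing.
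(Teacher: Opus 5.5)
Your plan is consistent with the paper. The paper gives no argument for this statement and simply quotes \cite{LW84,benoist2013separation}, and you, like the paper, rest the one substantive step, $H^0(X,T_X)=0$ when $X$ is Fano, on those references. What your scaffolding adds is a transparent reduction. Finiteness follows from $\Aut(X)$ being a linear algebraic group together with $\Aut^0(X)=1$. You prove the vanishing yourself when $K_X$ is ample, via Serre duality $H^0(X,T_X)\cong H^n(X,\Omega^1_X\otimes K_X)^\vee$ and Akizuki--Nakano. You also prove it when $K_X$ is trivial, via $T_X\cong\Omega^{n-1}_X$ and $h^{n-1,0}=h^{0,n-1}=0$ for $n\ge 2$. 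This makes visible where each hypothesis enters. In particular, the K3 exclusion is \emph{not} needed for vector fields: a K3 surface has $H^0(T_X)=H^{1,0}=0$, so your qualifier ``non-K3'' in the Calabi--Yau step is superfluous. The exclusion is needed only for linearity, since automorphisms of a K3 surface need not preserve $\calO_X(1)$.

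Two points need correcting if the scaffolding is to stand on its own.

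First, in the linearity step the exception is not ``$K_X$ a multiple of the hyperplane class''. For a complete intersection, $K_X=\calO_X(\sum r_i-n-k-1)$ always is such a multiple; the actual exception is $K_X=0$ with $n=2$. When $K_X\neq 0$, the cleanest argument is that one of $\pm K_X$ is ample and $\Aut(X)$-invariant. Hence $\Aut(X)$ is a closed subgroup of the projective linear group of a pluri(anti)canonical embedding. If you prefer to work with $\calO_X(1)$ for $n=2$, you must also use that $\Pic(X)$ is torsion-free, which follows from $\pi_1(X)=1$ and $H^1(X,\calO_X)=0$. Otherwise $\phi^*\calO_X(1)\otimes\calO_X(-1)$ is only known to be torsion.

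Second, the Koszul sketch for the Fano range would fail as written. The Matsumura--Monsky argument works for $k=1$ because the $n+2$ partials of $f$ form a regular sequence. For $k\ge 2$, however, there are $k(n+k+1)$ partials $\partial_j f_i$ in $n+k+1$ variables (the $f_l$ are redundant by Euler's formula). They therefore never form a regular sequence, and their Koszul complex has nonzero first homology. Smoothness only says that the $k\times k$ minors of the Jacobian, together with the $f_l$, cut out the origin. So nothing in your proposal controls the relevant linear syzygies in that range. The Fano case genuinely depends on the citation, which is acceptable here because the paper does the same.
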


There are analogous definitions and results for weighted hypersurfaces and complete intersections in weighted projective space.  Let $(a_0, \dots, a_{n})$ be a collection of positive integers such that no subset of size $n$ has a common factor.  Define $\bP(a_0, \dots, a_n) = \Proj \  \bC[x_0, \dots, x_n]$ where the grading on the polynomial ring is such that $\deg x_i = a_i$.  We will use superscripts on the weights to indicate a repeated weight, e.g. $\bP(1^3,2) = \bP(1,1,1,2)$.  This is the quotient of the affine space $(\Spec \bC[x_0, \dots, x_n] \setminus \{0\})/\bC^* $ where $\bC^*$ acts on $x_i$ with weight $a_i$.     

A \textbf{weighted hypersurface} $X_d \subset \bP(a_0, \dots, a_{n+1})$ is the vanishing locus of a polynomial equation $f(x_0, \dots, x_n) \in \bC[x_0, \dots, x_n]$ that is weighted homogeneous of degree $d$.  A \textbf{weighted complete intersection} $X_{r_1, \dots,r_k} \subset \bP(a_0, \dots, a_{n+k})$ is the vanishing locus of a regular sequence of weighted homogeneous equations $(f_1,\dots, f_k)$ of degree $\deg f_i = r_i$.  Adjunction holds for weighted projective space in this setting, and the canonical divisor of $X_{r_1, \dots, r_k}$ has degree $-(a_0 + \dots + a_{n+k}) + r_1 + \dots + r_k$, so $X_{r_1, \dots, r_k}$ is Fano if and only if $r_1 + \dots + r_k \le a_0 + \dots + a_{n+k} - 1$. 

Let $I$ be the homogeneous ideal generated by the weighted polynomial equation $f(x_0, \dots, x_n) \in \bC[x_0, \dots, x_n]$. For a weighted hypersurface $X_d = \Proj \ \mathbb{C}[x_0, \dots, x_n]/ I$ let $C_{X_d}$ be the affine scheme $C_{X_d} \coloneqq \Spec \mathbb{C}[x_0, \dots, x_n]/ I$. The weighted hypersurface is said to be \textbf{quasismooth} if $C_{X_d}^* \coloneqq C_{X_d} \setminus \{ 0 \} $ is smooth. 
Analogously, a weighted complete intersection whose homogeneous ideal $J$ is defined by regular sequence of weighted homogeneous equations $(f_1,\dots, f_k)$ is quasismooth if $C_{X_{r_1, \dots, r_k}}^* \coloneqq (\Spec \mathbb{C}[x_0, \dots, x_{n+k}]/ J) \setminus \{ 0 \} $ is smooth.  The singularities of a quasismooth complete intersection arise only from the $\bC^*$-action on $\Spec \mathbb{C}[x_0, \dots, x_{n+k}] \setminus \{ 0 \}$, which can occur only at the intersections of $X_{r_1, \dots, r_k}$ with the singular locus of the weighted projective space $\bP(a_0, \dots, a_{n+k})$. 


In this paper, we will consider complete intersections $X_{r_1, \dots, r_k} \subset \bP(1^{n+1}, a_{n+1}, \dots, a_{n+k})$ such that $a_{n+i} \mid r_i$ for each $i$.  An advantage of these complete intersections is that the generic one is \textit{smooth}: 

\begin{proposition}\label{prop:smoothness-of-wted-ci}
    Let $X = X_{r_1, \dots, r_k} \subset \bP(1^{n+1}, a_{n+1}, \dots, a_{n+k})$ be a generic weighted complete intersection such that $a_{n+i} \mid r_i$.  Then, $X$ is smooth.
\end{proposition}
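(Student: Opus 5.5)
Since every weight $a_{n+i}$ divides $r_i$, the general equation $f_i$ of degree $r_i$ contains the monomial $x_{n+i}^{r_i/a_{n+i}}$ with nonzero coefficient. We use this to show that $X$ avoids the singular locus of $\bP(1^{n+1},a_{n+1},\dots,a_{n+k})$, and then apply Bertini away from it.

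\textbf{Step 1: $X$ avoids the singular strata.} A point of the weighted projective space with some coordinate $x_j\neq 0$, $j\le n$, lies in the chart $x_j=1$. That chart is $\bA^{n+k}$, since $\deg x_j=1$ and the stabilizer is trivial. So the singular locus, and more generally every point with nontrivial stabilizer, lies in $\Pi=(x_0=\dots=x_n=0)\cong \bP(a_{n+1},\dots,a_{n+k})$.

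On $\Pi$, each $f_i$ restricts to a weighted homogeneous polynomial in $x_{n+1},\dots,x_{n+k}$ containing $c_i x_{n+i}^{r_i/a_{n+i}}$ with $c_i\ne 0$. For general coefficients we can arrange $X\cap \Pi=\emptyset$. The cleanest way is a degeneration argument: the special choice $f_i = g_i(x_0,\dots,x_n)+x_{n+i}^{r_i/a_{n+i}}$ gives $X\cap\Pi=\{x_{n+1}=\dots=x_{n+k}=0\}\cap\Pi=\emptyset$. Since the condition $X\cap\Pi=\emptyset$ is open in the parameter space (the projection of the incidence variety is closed), it holds for general $f_i$. Equivalently, the $k$ restricted polynomials $f_i|_\Pi$ form a regular sequence in the $k$ variables $x_{n+1},\dots,x_{n+k}$, so their only common zero is the origin.

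\textbf{Step 2: $X$ is smooth in the smooth charts.} Now $X$ is contained in $U=\bigcup_{j\le n}(x_j\neq0)$, which is a smooth variety covered by affine spaces. The linear system of degree $r_i$ weighted polynomials restricted to $U$ is base-point free on $U$, because $x_j^{r_i}$ is a section not vanishing where $x_j\ne0$. Applying Bertini's theorem successively to $f_1,\dots,f_k$ shows that for general choices each $U\cap(f_1=\dots=f_i=0)$ is smooth of the expected codimension. Hence $X=X\cap U$ is smooth of dimension $n$, and $(f_1,\dots,f_k)$ is a regular sequence.

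\textbf{Main obstacle.} The main step is Step 1, making sure the general member really misses $\Pi$. The subtle point is the openness of this condition together with exhibiting one member that misses $\Pi$, which the monomials $x_{n+i}^{r_i/a_{n+i}}$ supply. One also has to note that the locus with nontrivial stabilizer is exactly contained in $\Pi$; this holds because weights $1$ occur among $x_0,\dots,x_n$. The Bertini step is standard.
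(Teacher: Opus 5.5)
Your proof is correct, but it divides the work differently from the paper.

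The paper produces one explicit smooth member and then invokes openness of smoothness in the universal family. The member is $f_i(x_0,\dots,x_n)+x_{n+i}^{r_i/a_{n+i}}=0$ with the $f_i$ general in $x_0,\dots,x_n$. Its quasismoothness is checked from the Jacobian; for these equations this reduces to the hypersurfaces $(f_i=0)\subset\bP^n$ meeting transversally, which holds for general $f_i$. The paper also observes that this member misses $(x_0=\dots=x_n=0)$.

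You use that same special member only for one purpose: to show the general $X$ misses $\Pi$, via properness of $\Pi$ and closedness of the projected incidence variety. You then get smoothness of the general member directly from Bertini on the smooth open set $U=\bigcup_{j\le n}(x_j\neq 0)$. There $\mathcal{O}(r_i)|_U$ is a line bundle, and the degree-$r_i$ polynomials form a base-point-free system on $U$.

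What your route buys: it avoids the Jacobian computation and spells out the genericity argument that the paper compresses into ``smoothness is open.'' The cost is an iterated Bertini over a product of parameter spaces. This is cleanest as generic smoothness of the universal family over $U$, whose total space is smooth because the conditions $f_i(p)=0$ are independent linear conditions on the parameters.

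What the paper's route buys: it exhibits an explicit smooth member of exactly the cyclic-cover form $f_i+x_{n+i}^{r_i/a_{n+i}}=0$. That is the shape of complete intersection used later in Theorem~\ref{thm:index1} and in the inductive argument.

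Your regular-sequence claim is also fine. Once $X\cap\Pi=\emptyset$ and $X\cap U$ has pure dimension $n$, the affine cone has codimension $k$ in a graded polynomial ring, and that forces $(f_1,\dots,f_k)$ to be regular.
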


\begin{proof}
    Because smoothness is an open condition in families, it suffices to produce one smooth complete intersection of this type.  For ease of notation, denote by $[x_0: \dots : x_{n+k}]$ the coordinates on the weighted projective space and $b_{i} = \frac{r_i}{a_{n+i}}$.  Choose general homogeneous functions $f_1, \dots, f_k \in \bC[x_0, \dots ,x_n]$ of degree $ \deg f_i = r_i$ and define 
    	\begin{equation*}
		X\colon \begin{cases}
			f_1 + x_{n+1}^{b_1} =0\\
			f_2 + x_{n+2}^{b_2} = 0\\
			\quad \vdots \\
			f_{k-1} + x_{n+k-1}^{b_{k-1}} = 0 \\
			f_k+x_{n+k}^{b_k}=0.\\
		\end{cases} 
	\end{equation*}
    It is straightforward to check that $X$ is quasismooth from the Jacobian of these equations.  Furthermore, the intersection of $X$ with the locus $(x_0 = \dots = x_{n} = 0)$, which contains the singular locus of the weighted projective space, is empty, so $X$ is smooth.
\end{proof}

Because such a complete intersection is smooth, we may apply the following result to conclude its automorphism group is finite in most cases.

\begin{theorem}\cite[Theorem 1.3]{autsmthWCI}
    Let $X$ be a smooth well formed weighted complete intersection of dimension $n$. Suppose that either $n \geq 3$, or $K_X \ne 0$. Then the
group $\Aut(X)$ is finite unless $X$ is isomorphic either to $\bP^n$ or to a quadric hypersurface. 
\end{theorem}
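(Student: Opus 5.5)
The statement is \cite[Theorem 1.3]{autsmthWCI}, so in the paper it is simply quoted. If I had to prove it, I would reduce finiteness of $\Aut(X)$ to the vanishing $H^0(X,T_X)=0$ and then compute this cohomology from the weighted Euler sequence. The plan splits according to the sign of $K_X$.

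\emph{Reduction.} Whenever $K_X\neq 0$, one of $\pm K_X$ is ample, since $\Pic(X)\otimes\bQ$ is generated by $\calO_X(1)$ by a weighted Lefschetz theorem. Hence $\Aut(X)$ preserves an ample class and is a linear algebraic group. It is therefore finite if and only if its Lie algebra $H^0(X,T_X)$ vanishes. If $K_X$ is ample, $X$ is of general type, so $\Aut(X)$ is finite by the classical theorem. I would instead show $H^0(T_X)=0$ directly via the argument below, for uniformity.

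If $K_X=0$ and $n\ge 3$, then $T_X\cong\Omega_X^{n-1}$. Thus $h^0(T_X)=h^{n-1,0}(X)$, and this equals $h^{0,n-1}=h^{n-1}(\calO_X)$. This vanishes by the standard Koszul computation: intermediate cohomology of $\calO$ on a weighted complete intersection vanishes in degrees $0<i<n$. Note that $n\ge3$ is needed to avoid the K3 case.

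\emph{Main computation.} This is the Fano (and general type) case, in the spirit of Matsumura--Monsky. Since $X$ is smooth and well formed, it lies in the smooth locus of the ambient stack. There we have the generalized Euler sequence
\[0\to\calO_X\to\bigoplus_j\calO_X(a_j)\to T_{\bP}|_X\to0\]
and the normal sequence
\[0\to T_X\to T_{\bP}|_X\to\bigoplus_i\calO_X(r_i)\to0.\]
Using the Koszul resolution and the vanishing $H^1(\calO_X(m))=0$ for $n\ge2$, global sections of $T_{\bP}|_X$ are degree-$0$ derivations $\sum g_j\partial_{x_j}$ of $S/I$, where $S$ is the graded coordinate ring and $I=(f_1,\dots,f_k)$, taken modulo the Euler derivation. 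A vector field on $X$ is such a derivation $D$ with $D(f_i)\in I$ for all $i$. Then $D(f_i)=\sum_l c_{il}f_l$ with $\deg c_{il}=r_i-r_l$.

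So I must show every such $D$ is a multiple of the Euler field. The natural approach is to exploit smoothness, which makes the Jacobian matrix of maximal rank on the punctured cone. From the relation $\sum_j g_j\,\partial f_i/\partial x_j\equiv\sum_l c_{il}f_l$ and a Koszul/depth argument on the Jacobian ideal, I would deduce that the $g_j$ have low degree: $\deg g_j=a_j$ forces the $g_j$ to be linear combinations of variables of the same or smaller weight.

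\emph{Main obstacle.} I expect the hardest step to be the final combinatorial one. Smooth well-formed weighted complete intersections satisfy strong divisibility constraints on the weights (for instance, each $a_j>1$ divides suitable $r_i$, and variables of large weight appear as pure powers up to coordinate change). Using these, I would try to reduce to the case where every low-degree derivation preserving $I$ comes from $\mathfrak{gl}$ of the weight-$1$ variables. One then checks by the usual Hessian argument that the stabilizer of $I$ in that Lie algebra is trivial modulo scalars, unless some $r_i=1$ can be eliminated, which reduces to a smaller ambient space, or the remaining equation is a single quadric, or there are no equations, giving $\bP^n$. Carrying out this reduction cleanly in the weighted setting, where derivations can mix variables of different weights, is where I expect most of the work to lie.
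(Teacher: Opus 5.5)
The paper gives no proof of this statement. It is quoted from \cite{autsmthWCI}, so your proposal has to stand on its own, and it has a genuine gap. The Fano case, which is the whole content of the theorem, is left as a programme rather than an argument.

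For $n\ge 2$ you correctly identify $H^0(X,T_X)$ with degree-zero derivations of $S$ preserving $I$, taken modulo those with image in $I$ and modulo the Euler field. But you never prove that every such derivation is a multiple of the Euler field, and the route you sketch would not get there as stated:
\begin{itemize}
\item The claim that $\deg g_j=a_j$ forces $g_j$ to be a linear combination of variables is false. For example, $x_0^{a_j}\partial_{x_j}$, with $x_0$ of weight one, is a degree-zero derivation in the Lie algebra of the automorphism group of the ambient weighted projective space. So nothing reduces the problem to the general linear Lie algebra of the weight-one variables.
\item The ``usual Hessian argument'' is the Matsumura--Monsky argument for a single equation. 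It does not directly handle several equations of different degrees.
\item Your list of exceptions only eliminates equations with $r_i=1$. It misses weighted linear cones $r_i=a_j>1$, e.g.\ $y=q(x_0,\dots,x_n)$ in $\bP(1^{n+1},2)$, which is $\bP^n$.
\item Nothing in the sketch shows that $\bP^n$ and the quadric are the \emph{only} cases with vector fields.
\end{itemize}

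The missing idea is to avoid derivations altogether. Since $X$ is smooth and well formed, $\calO_X(1)$ is an ample line bundle and $\omega_X\cong\calO_X(-\iota_X)$ with $\iota_X=\sum a_j-\sum r_i$. Hence $T_X\cong\Omega^{n-1}_X\otimes\omega_X^{-1}\cong\Omega^{n-1}_X(\iota_X)$. Kobayashi--Ochiai then gives $\iota_X\le n+1$, with $\iota_X=n+1$ only for $\bP^n$ and $\iota_X=n$ only for a quadric. In every remaining case $\iota_X\le n-1$. Flenner's Bott-type vanishing for twisted differential forms on quasi-smooth weighted complete intersections gives $H^0(X,\Omega^{n-1}_X(t))=0$ for $t\le n-1$ (when $n\ge 2$). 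Hence $H^0(X,T_X)=0$ uniformly, including the general type and Calabi--Yau cases.

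Two smaller points:
\begin{itemize}
\item For $K_X\neq 0$, ampleness of $\pm K_X$ follows from this adjunction formula. Your Lefschetz justification is false for $n=2$: del Pezzo surfaces of degree at most $4$ are smooth well formed weighted complete intersections of Picard rank $>1$.
\item For $K_X=0$ you never show that $\Aut(X)$ is an algebraic group, and this is exactly where $\Pic(X)\cong\bZ$ for $n\ge 3$ is needed. The vanishing of $H^0(T_X)$ holds for K3 surfaces as well; they are excluded because $\Aut(X)$ can then be infinite and discrete.
\end{itemize}
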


\section{Some results on GIT}

In this section, we introduce the basic definitions in Geometric Invariant Theory (see \cite{mumford1994geometric}) and include some results on GIT stability of particular varieties that will be used in what follows. 

\begin{definition}
    Let $G$ be a reductive group acting by $G \to \SL_{n+1}$ on $\bP^n$, and let $\calL =\calO(1)$ be an ample line bundle on $\bP^n$.  A point $x \in \bP^n$ is 
    \begin{enumerate}
        \item \textbf{GIT-semistable} if there exists a $G$-invariant function $f \in H^0(\bP^n, \calO(d))^G$ such that $f(x) \ne 0$;  
        \item \textbf{GIT-polystable} if it is GIT-semistable and $G \cdot x$ is closed in the semistable locus; and 
        \item \textbf{GIT-stable} if it is GIT-polystable and the stabilizer subgroup $G_x$ is finite. 
    \end{enumerate}
\end{definition}

We also recall the Hilbert-Mumford criterion to detect GIT stability.  Let $G$ be a reductive group acting by $G \to \SL_{n+1}$ on $\bP^n =  \Proj \ \bC[x_0, \dots, x_n]$ and let $\lambda \colon \mathbb C^*\rightarrow G$. In suitable coordinates we can assume that the action is diagonal, that is, that it acts on the basis coordinates as $\lambda(t)\cdot x_i=t^{w_i}x_i$ for some integers $w_i$ such that $\sum_{i=0}^n w_i =0$. This action induces a natural grading on $R=\oplus_{w\in \mathbb Z} R_w$, where the piece with weight $w$, denoted by $R_w$, corresponds to all monomials $x^{\alpha}=x_0^{\alpha_0}\cdots x_n^{\alpha_n}$ such that  their weight $w_{\lambda}(x^{\alpha})=\sum_{i=0}^n \alpha_i w_i$ is exactly $w$. Recall that the Hilbert-Mumford function is defined as 
\begin{equation} \label{eq:HM}
    \mu(x,\lambda)=-\mathrm{min}_i\{w_i\, | \, x_i\not =0\} 
\end{equation}

A point $x \in R$ corresponds to a point $[x] \in \bP^n$, and by the Hilbert-Mumford criterion (\cite[Theorem 2.1]{mumford1994geometric}), with respect to the action of $G$ on $\bP^n$, this point is 

\begin{enumerate}
    \item \textbf{GIT-semistable} if $\mu(x, \lambda) \ge 0$ for all 1-PS subgroups $\lambda$; 
    \item \textbf{GIT-polystable} if $\mu(x, \lambda) > 0$ for all non-trivial 1-PS subgroups $\lambda$; and
    \item \textbf{GIT-stable} if $\mu(x, \lambda) > 0$ for all non-trivial 1-PS subgroups $\lambda$ and the stabilizer of $[x]$ under the action of $G$ on $\bP^n$ is finite.
\end{enumerate}

We list several well-known results in GIT.

\begin{lemma}\label{lem:GITstability-hypersurfaces}
    Let $X \subset \bP^n$ be a smooth hypersurface of degree $d \ge 2$.  If $d = 2$, then $X$ is GIT polystable.  If $d \ge 3$, then $X$ is GIT stable.
\end{lemma}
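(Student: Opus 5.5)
For $d=2$ and $d \ge 3$ I will use two separate arguments: an explicit normal form for quadrics, and Mumford's criterion that a smooth form of degree $d\ge3$ is stable. Throughout, $G=\SL_{n+1}$ acts on $\bP(H^0(\bP^n,\calO(d)))$, and $X=(F=0)$ corresponds to the point $[F]$.

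\emph{Case $d=2$.} A smooth quadric is a nondegenerate quadratic form, so after a linear change of coordinates, rescaled to determinant one, I may take $F=x_0^2+\dots+x_n^2$. Its stabilizer in $G$ is $\mathrm{SO}_{n+1}$, which is reductive. The plan has two steps.
\begin{enumerate}
\item Semistability: the discriminant $\det$ of the symmetric matrix of $F$ is an $\SL_{n+1}$-invariant of degree $n+1$ that does not vanish at $F$.
\item Closedness of the orbit: every point in the $G$-orbit of $[F]$ is a form $\sum a_{ij}x_ix_j$ with $\det(a_{ij})=1$, up to the scalar. If a limit $F'$ of points in the orbit were not in the orbit, it would have $\det F'\neq 0$ by semistability of the limit. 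A nondegenerate form is then again in the orbit after rescaling, since any two nondegenerate quadrics with the same determinant are $\SL$-equivalent over $\bC$.
\end{enumerate}
Hence the orbit is closed in the semistable locus and $X$ is polystable. It is not stable, because $\mathrm{SO}_{n+1}$ is infinite.

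\emph{Case $d\ge3$.} The classical argument runs in two parts.
\begin{enumerate}
\item Finiteness of the stabilizer. For $n\ge 2$ (hypersurfaces of dimension $\ge1$), the cited result of Matsumura–Monsky handles most cases. The excluded cases (K3 surfaces and plane cubics) still have finite linear stabilizer, because the stabilizer is a linear algebraic group preserving a smooth hypersurface with $H^0(T_X)=0$; for plane cubics the linear automorphisms preserving $X$ form a finite group. I can sidestep this entirely with the standard fact that a linear infinitesimal automorphism $\sum c_{ij}x_j\partial_i F\in(F)$ forces $c=0$ when $d\ge3$. This holds because $\partial_0F,\dots,\partial_nF$ form a regular sequence of degree $d-1\ge 2$, so there are no linear syzygies.
\item The Hilbert–Mumford bound. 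Let $\lambda$ be a nontrivial 1-PS, diagonalized with weights $w_0\ge\dots\ge w_n$, $\sum w_i=0$. Suppose $\mu(F,\lambda)\le 0$, i.e.\ every monomial of $F$ has $\lambda$-weight $\ge0$ (with the sign convention of \eqref{eq:HM}). I will show that $X$ is then singular at the coordinate point $p_n=[0:\dots:0:1]$. The monomials $x_n^d$ and $x_ix_n^{d-1}$ have weights $dw_n$ and $w_i+(d-1)w_n$.
\begin{itemize}
\item $x_n^d$ has negative weight, since $w_n<0$ for a nontrivial $\lambda$, so $F(p_n)=0$.
\item If $w_i+(d-1)w_n\ge 0$ for some $i$, then the weights cannot sum to zero. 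I would compare $w_i\le w_0$ and use $\sum w_j=0$ to bound $w_0\le -n w_n$ or similar. This requires care, and the cleaner route is the standard one.
\end{itemize}
\end{enumerate}

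The standard route is to show that nonnegative weight of all monomials forces all partials of $F$ to vanish at a common point. I would instead invoke Mumford's classical argument (GIT, Prop.~4.2). If all monomials of $F$ have weight $\ge 0$, then there is an index $j$ and a coordinate point such that no monomial of the form $x_j^{d-1}x_i$ appears. Choosing $j$ with $w_j$ minimal, $x_j^{d-1}x_i$ has weight $(d-1)w_j+w_i<0$ whenever $d\ge 3$ and $w_i$ is not too large. This needs $(d-1)w_j+w_0<0$, which follows from $w_0\le -(\text{sum of the others})$ only in good cases. The genuine obstacle is exactly this weight inequality. Mumford resolves it by working at the point where the flag of coordinate subspaces yields a singular point via a dimension count on the vanishing of partial derivatives. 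I would reproduce that flag argument, finding a linear subspace on which all partials vanish, as the key step.

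Stability then follows from the Hilbert–Mumford criterion as recalled above: $\mu(F,\lambda)>0$ for all nontrivial $\lambda$, together with the finite stabilizer from step 1.
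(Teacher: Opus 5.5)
Your $d=2$ case is essentially fine. The cleanest version notes that the orbit of the vector $F$ in $\mathrm{Sym}^2$ is the closed set $\{\det=1\}$; this also avoids your unjustified claim that a semistable limit must be nondegenerate. Your Step~1 for $d\ge3$ is also correct: a relation $\sum_i \ell_i\,\partial_iF=0$ with $\deg \ell_i=1$ must be a Koszul syzygy, which vanishes when $d-1\ge 2$, so the stabilizer of $F$ in $\SL_{n+1}$ is finite.

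The genuine gap is Step~2. You never prove $\mu(F,\lambda)>0$ for nontrivial $\lambda$, so you never show the orbit is closed. The coordinate-point approach cannot work. Take $n=3$, $d=3$, weights $(3,-1,-1,-1)$ and $F=x_0(x_0^2+x_1^2+x_2^2+x_3^2)$. Every monomial of $F$ has weight $\ge 0$, yet $\partial_0F(p_3)=1$. So the singular point need not be a coordinate point, and your claim that $w_i+(d-1)w_n\ge0$ is incompatible with $\sum w_j=0$ is false. The appeal to a ``flag argument with a dimension count on partials'' is then left unexecuted. It is also not how Mumford proves GIT Prop.~4.2: he uses the discriminant together with finiteness of stabilizers.

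The missing idea is exactly the paper's key observation: the discriminant $\Delta$ on $V=H^0(\bP^n,\calO(d))$. It is an $\SL_{n+1}$-invariant polynomial with $\Delta(F)\neq 0$ precisely when $V(F)$ is smooth, and it combines with your Step~1 as follows. Suppose every monomial of $F$ has $\lambda$-weight $\ge 0$. Then $\lambda(t)\cdot F$ has a limit $F_0$, the weight-zero part of $F$, as $t\to 0$ or $t\to\infty$ (depending on the convention for the action). Invariance and continuity give $\Delta(F_0)=\Delta(F)\neq0$. So $F_0$ is a smooth form of degree $d\ge 3$ fixed by the infinite group $\lambda(\bC^*)$, which contradicts Step~1 applied to $F_0$.

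Equivalently, argue as the paper does in Lemma~\ref{lem:GIT-of-(2,d)ci}. The closure of $G\cdot F$ in $V$ lies in $\{\Delta=\Delta(F)\}$, which consists of smooth forms. By Step~1 all their orbits have dimension $\dim \SL_{n+1}$, so no boundary orbit of smaller dimension can occur. Replacing your Step~2 with either version completes the proof.
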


\begin{proof}
    For $d = 2$, this is Example 10.1 in \cite{dolgachev2003lectures}.  For $d \ge 3$, the polystability is \cite[Chapter 4, Proposition 4.2]{mumford1994geometric}.  Note that in \cite{mumford1994geometric}, Mumford uses the terminology \textit{stable} in place of \textit{polystable}, however in the course of the proof, shows that the stabilizer of a degree $d \ge 3$ hypersurface is finite.  
    
    The key observation in both proofs is that the discriminant hypersurface $\Delta \subset \bP(H^0(\bP^n, \calO(d)))$ is an $\Aut(\bP^n)$-invariant hypersurface that does not vanish on $[X]$, by the assumption that $X$ is smooth. 
\end{proof}

\begin{lemma}\label{lem:GITstability-finiteAut}
    Let $Y$ be a projective variety and suppose $G = \Aut(Y)$ is finite. Let $X \subset Y$ be a divisor $X \in |\calL|$, where $\calL$ is a very ample line bundle on $Y$.  Then, $[X]$ is a GIT stable point of $\bP(H^0(Y, \calL))$ with respect to the natural action of $G$.
\end{lemma}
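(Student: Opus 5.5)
The key observation is that when $G$ is finite, every $G$-orbit is a finite set and every stabilizer is a finite subgroup. So the statement should follow almost formally from the definitions, with the only real point being how to interpret the action of $G$ on $\bP(H^0(Y,\calL))$.

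\textbf{Setting up the action.} First I will make precise how $G=\Aut(Y)$ acts on $\bP(H^0(Y,\calL))$. The intended meaning is the induced action of the subgroup $G_{\calL}\subseteq G$ of automorphisms with $g^*\calL\cong\calL$. After passing to a finite central extension, this lifts to a linear action on $H^0(Y,\calL)$, and after rescaling we may assume it lands in $\SL(H^0(Y,\calL))$. Scalars act trivially on the projective space, so this changes neither orbits nor finiteness of stabilizers. The resulting group $\widetilde G$ is finite, hence reductive, so the GIT definitions of the previous section apply.

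\textbf{Semistability.} Next I will show $[X]$ is GIT-semistable by producing an invariant section that does not vanish at $[X]$. Choose a nonzero $s\in H^0(Y,\calL)$ with $X=(s=0)$, and a linear form $\ell$ on $H^0(Y,\calL)$ with $\ell(g\cdot s)\neq 0$ for all $g\in\widetilde G$. Such an $\ell$ exists because the orbit $\{g\cdot s\}$ is a finite set of nonzero vectors, so we only need to avoid finitely many hyperplanes in the dual space. Then
\[ F=\prod_{g\in\widetilde G} g\cdot\ell \]
is a $\widetilde G$-invariant homogeneous polynomial of degree $|\widetilde G|$. Its value at $s$ is, up to ordering, $\prod_g \ell(g^{-1}s)\neq 0$.

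\textbf{Polystability and stability.} The orbit $\widetilde G\cdot[X]$ is finite, hence closed in the semistable locus, so $[X]$ is polystable. Its stabilizer is a subgroup of a finite group, hence finite, so $[X]$ is GIT-stable. Alternatively, one can use the Hilbert--Mumford criterion as stated: a finite group has no nontrivial one-parameter subgroups, so the condition there holds vacuously.

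The only step needing care is the first one, making the action of $G$ on $\bP(H^0(Y,\calL))$ precise (the linearization and the reduction to $\SL$). Once that is fixed, the rest is formal. The main use of very ampleness is that $\bP(H^0(Y,\calL))$ genuinely parametrizes the divisors in $|\calL|$, which is why the point $[X]$ makes sense.
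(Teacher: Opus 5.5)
Your proposal is correct and takes essentially the same route as the paper. Both linearize the action of the finite group: you lift to a finite central extension inside $\SL(H^0(Y,\calL))$, while the paper passes to a power of the line bundle. Both then use that, for a finite group, every point is stable. The paper cites this from Dolgachev's exercise; you verify it directly via the invariant $\prod_{g} g\cdot\ell$, closedness of finite orbits, and finiteness of stabilizers. You also give the same vacuous Hilbert--Mumford alternative as the paper.
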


\begin{proof}
    Let $P = \bP(H^0(Y, \calL))$ with induced action of $G$.  Given any line bundle $\calM$ on $P$, some power of $\calM$ admits a $G$-linearization (see, e.g. \cite[Corollary 7.2]{dolgachev2003lectures}), so we may assume $G$ acts linearly on $(P, \calO(1))$.  Because $G$ is finite, by \cite[Exercise 8.5]{dolgachev2003lectures}, every point of $P$ is stable.  Alternatively, by the Hilbert-Mumford criterion, there are no non-trivial one parameter subgroups of a finite group $G$, so $[X]$ is GIT stable. 
\end{proof}

The next result is standard but we include a proof for convenience of the reader.

\begin{lemma}\label{lem:GIT-of-(2,d)ci}
    Let $Q \subset \bP^n$ be a smooth quadric hypersurface and $X \in |\calO_{Q}(d)|$ a smooth divisor for $d \ge 2$.  Then, $[X]$ is a GIT stable point of $\bP(H^0(Q, \calO_Q(d)))$ with respect to the natural action of $\Aut(Q) = SO(n+1)$. 
\end{lemma}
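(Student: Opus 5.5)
We argue exactly as in the proof of Lemma \ref{lem:GITstability-hypersurfaces}, replacing the discriminant of $\bP^n$ by the discriminant of the linear system $|\calO_Q(d)|$. Set $V = H^0(Q, \calO_Q(d))$ and let $\Delta_Q \subset \bP(V)$ be the locus of divisors $D \in |\calO_Q(d)|$ that are singular. The plan has three steps.

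First, we show that $\Delta_Q$ is a hypersurface; it is irreducible and defined over $\bZ$. Since $\calO_Q(d)$ is very ample, $\Delta_Q$ is projectively dual to the $d$-th Veronese re-embedding $v_d(Q) \subset \bP(V^\vee)$. The dual of a smooth variety embedded by a sufficiently positive linear system (one that separates $2$-jets) is a hypersurface. This is where $d \ge 2$ is used: for $d=1$ the dual of a smooth quadric is again a quadric, but for $d\ge 2$ the system separates $2$-jets, so the dual defect is zero. Alternatively, since $Q$ is not a linear space, the general tangent hyperplane section has an isolated ordinary double point, which gives the same conclusion. Let $F \in \mathrm{Sym}^N V^\vee$ be a defining equation of $\Delta_Q$.

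Second, we show that $F$ is $SO(n+1)$-invariant. Because $\Delta_Q$ is invariant under $\Aut(Q)$, $F$ is semi-invariant, transforming by a character of $SO(n+1)$. Since $SO(n+1)$ is semisimple (for $n+1\ge 3$; the case $n+1 = 2$ does not arise in practice, and the $\bC^*$ case can be handled by a direct check), its character group is trivial, so $F$ is invariant. Since $X$ is smooth, $F([X]) \ne 0$, so $[X]$ is GIT semistable. Moreover, the orbit of $[X]$ lies in the affine open set $\bP(V) \setminus \Delta_Q$, on which $SO(n+1)$ acts. Rescaling $F$ (or working on the affine cone $\{F = 1\}$), the orbit of the lifted point is closed because its closure inside the semistable locus cannot meet $\Delta_Q$, and closedness of the orbit gives polystability.

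Third, we prove that the stabilizer of $[X]$ is finite. For this we use the characterization that a polystable point with finite stabilizer is stable. The stabilizer $\mathrm{Stab}([X]) \subset SO(n+1)$ acts faithfully on $X$ (for $n\ge 3$; when $n=2$, $Q\cong\bP^1\times\bP^1$ and $X$ is a smooth curve of bidegree $(d,d)$). Hence it embeds in $\Aut(X)$, and we argue that $\Aut(X)$ is finite. $X$ is a smooth complete intersection of type $(2,d)$ in $\bP^n$, so the theorem of \cite{LW84,benoist2013separation} applies whenever $\dim X \ge 2$, except when $X$ is a K3 surface. In the low-dimensional or K3 cases we instead use that a connected positive-dimensional stabilizer would contain a $\bG_m$, and hence a $1$-PS $\lambda$ fixing $[X]$. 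Diagonalizing $\lambda$ in $SO(n+1)$ with weights $w, -w$ paired on isotropic coordinates, an equation of $X$ that is invariant up to scaling under $\lambda$ is forced to have all its monomials with one fixed weight. A direct Jacobian check then shows that $X$, together with $Q$, is singular at a fixed point of $\lambda$. For the curve case, genus $\ge 1$ when $d \ge 2$ gives finiteness of the automorphisms preserving the embedding.

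The main obstacle is the third step, in particular the uniform treatment of finiteness of the stabilizer in the cases not covered by \cite{LW84,benoist2013separation}, namely K3 surfaces and curves. The first thing to try there is the $1$-PS argument just described, which reduces everything to showing that the weight-homogeneous members of $|\calO_Q(d)|$ are singular at a $\lambda$-fixed point.
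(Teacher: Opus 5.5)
Your second step has a genuine gap. You conclude that the orbit of the lifted point is closed because its closure stays inside $\{F\neq 0\}$, and that inference is not valid. An invariant $F$ with $F(\hat X)\neq 0$ only gives semistability. The set $\{F=1\}$ is an affine $SO(n+1)$-variety, and orbits in it need not be closed. For example, for binary quartics under $\SL_2$, the invariant $I$ is nonzero on $x^3y-x^2y^2$, yet $\lambda(t)=\mathrm{diag}(t,t^{-1})$ sends it to $t^2x^3y-x^2y^2\to -x^2y^2$, so its orbit is not closed. Your third step then invokes ``polystable with finite stabilizer implies stable'', so the stability conclusion rests on this unproved closedness. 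Semistability together with finiteness of $\mathrm{Stab}([X])$ does not by itself give stability.

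The missing ingredient is the argument the paper uses. Suppose $SO(n+1)\cdot\hat X$ were not closed in $\{F=1\}$. Its boundary would then contain an orbit $SO(n+1)\cdot\hat Y$ of strictly smaller dimension, so $\mathrm{Stab}(\hat Y)$ would be positive-dimensional. But $F(\hat Y)=1$, so $Y$ is again a smooth member of $|\calO_Q(d)|$. Finiteness of the linear automorphism groups of smooth $(2,d)$ complete intersections \cite{LW84,benoist2013separation} then gives a contradiction. In other words, you must apply your finiteness statement to every smooth member of the linear system, not only to $X$, and feed it into the closedness argument. With that repair your outline is the paper's proof.

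Your extra care about the exceptional cases is a genuine improvement over the paper's citation, though your indexing is off. For $n=2$, $Q$ is a conic and $X$ consists of $2d$ points; $Q\cong\bP^1\times\bP^1$ occurs for $n=3$; and the K3 case is $(n,d)=(4,3)$. In all these cases you only need finiteness of automorphisms preserving $\calO_X(1)$. This follows from $H^0(X,T_X)=0$ for the K3 surface and is classical for the curves and the point configurations, so your sketched $1$-PS Jacobian argument is not needed.
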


\begin{proof}
    Because $L = \calO_{Q}(d)$ is very ample, the generic element $X \in |L|$ is smooth by Bertini's Theorem.  Then, the discriminant is given by the dual variety $\Delta = Q^{\vee}$ in $P = \bP(H^0(Q, L))$, which an $SO(n+1)$-invariant divisor that is $0$ at a point $[X], X \in |L|$ if and only if $X$ is singular (see, for example, \cite[Chapter I]{GKZbook}).  We conclude that a smooth complete intersection $[X] \in P$ is GIT semistable. 
    
    To prove that it is polystable, because $\Delta$ is an $SO(n+1)$-invariant function, it is constant on orbit closures.  Therefore, the closure of the orbit of a smooth point $[X]$ must be contained in the complement of $\Delta$.  Suppose $SO(n+1) \cdot [X]$ is not closed.  Then, there exists a smooth complete intersection $Y$ such that $[Y] \in \overline{SO(n+1)\cdot [X]} - SO(n+1) \cdot X$.  By the Orbit-Stabilizer theorem, we must have the dimension of the stabilizer of $Y$ larger than that of $X$.  However, because any smooth complete intersection $Y$ has finite automorphism group by \cite{LW84} or \cite[Theorem 3.1]{benoist2013separation}, this is a contradiction and therefore $SO(n+1) \cdot [X]$ is closed.  We therefore conclude that $[X]$ is GIT polystable, and as it has finite automorphism group by \cite{LW84} or \cite[Theorem 3.1]{benoist2013separation}, it is in fact GIT stable. 
\end{proof}

We will also use the GIT stability of simple normal crossing sums of quadrics. 

\begin{lemma}\label{lem:sncquadrics}
    Let $G=\mathrm{SL}(n+1)$ and consider its natural action on the space of degree $d$ polynomials in $n+1$ variables, that is, $V=\mathrm{Sym}^{d}\big(\mathbb C{^{n+1}}\big)^*$. Let $B = \sum B_l$ be the sum of $k$ smooth quadrics such that $B$ is simple normal crossing in $\mathbb P^n$. Then, $[B]$ is a GIT-polystable point of $\bP(\mathrm{Sym}^{2k}\big(\mathbb C{^{n+1}}\big)^*)$. Moreover, if $k\geq 2$ then $[B]$ is GIT-stable. 
\end{lemma}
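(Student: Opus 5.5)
We prove the lemma with the Hilbert--Mumford criterion. A degree $2k$ form is a point of $\bP(\mathrm{Sym}^{2k})$, and $B$ is given by $F = q_1 \cdots q_k$, where $q_l$ is a nondegenerate quadratic form defining $B_l$. Fix a non-trivial one-parameter subgroup $\lambda$ of $\SL(n+1)$. After a change of coordinates it is diagonal with weights $w_0 \le w_1 \le \dots \le w_n$, $\sum w_i = 0$, and not all $w_i = 0$. The weight of a monomial is additive, so the minimal weight of a monomial in $F$ is at least the sum over $l$ of the minimal weights of monomials in $q_l$. It is exactly that sum, because the lowest-weight parts multiply to a nonzero product in the polynomial ring, which is a domain. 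Therefore $\mu(F,\lambda) = \sum_l \mu(q_l,\lambda)$ (with the paper's sign conventions), and we reduce to the single quadric.

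\textbf{Step 1 (one quadric).} A smooth quadric is GIT-polystable by Lemma~\ref{lem:GITstability-hypersurfaces}. Concretely, nondegeneracy gives the following. For each $i$, the row $i$ of the symmetric matrix of $q_l$ is nonzero, so some monomial $x_ix_j$ appears in $q_l$. Moreover, by a standard rank count, there is a permutation $\sigma$ with each $x_ix_{\sigma(i)}$ in $q_l$; this is the support of a nonzero term of the determinant. Summing over $i$ gives $\sum_i (w_i + w_{\sigma(i)}) = 0$. So the minimal weight $\min\{w_i+w_j : x_ix_j \in q_l\}$ is $\le 0$, i.e. $\mu(q_l,\lambda) \ge 0$. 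Equality forces every pair $(i,\sigma(i))$ to have weight exactly $0$, and also forces every monomial in $q_l$ to have weight $\ge 0$.

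\textbf{Step 2 (polystability).} From Step 1, $\mu(F,\lambda) \ge 0$. We must show that $\mu(F,\lambda)=0$ implies $\lambda$ fixes $[F]$, i.e. $\lim_{t\to0}\lambda(t)\cdot F$ lies in the orbit (equivalently $\lambda$ lies in the stabilizer, up to conjugacy). If $\mu(F,\lambda)=0$, then each $\mu(q_l,\lambda)=0$. By Step 1, the limit $\bar q_l = \lim \lambda(t)\cdot q_l$ (the weight-$0$ part of $q_l$) still contains the permutation terms, so it is nondegenerate. Hence the limit $\bar F = \prod \bar q_l$ is again a product of smooth quadrics. To conclude closedness of the orbit, we use the snc hypothesis. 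The set of snc configurations of $k$ smooth quadrics is the complement of an $\SL$-invariant discriminant-type divisor: the product of the discriminants of all intersections $B_{l_1}\cap\dots\cap B_{l_s}$, viewed as an invariant of the tuple, or more simply of the form $F$ via the discriminants of its factors. The limit of an orbit lying in this open invariant set stays in it, so $\bar F$ is again snc. We then argue as in Lemma~\ref{lem:GIT-of-(2,d)ci}: if the orbit were not closed, the boundary point $\bar F$ would have a strictly larger stabilizer. For $k \ge 2$ we show all such snc configurations have finite stabilizer, which gives the contradiction. For $k=1$ the statement is just Lemma~\ref{lem:GITstability-hypersurfaces}.

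\textbf{Step 3 (finite stabilizer for $k\ge2$).} Suppose a one-parameter subgroup (or a positive-dimensional connected subgroup) preserves $B$. It then preserves each component $B_l$, so it lies in $SO(q_1)\cap SO(q_2)$. Because $B_1+B_2$ is snc, $B_1\cap B_2$ is a smooth complete intersection of two quadrics. Equivalently, the pencil $\langle q_1,q_2\rangle$ has $n+1$ distinct singular members, since for a pencil of quadrics smoothness of the base locus forces the determinant to have distinct roots. The pencil can then be simultaneously diagonalized as $\sum x_i^2$ and $\sum \lambda_i x_i^2$ with distinct $\lambda_i$. The common orthogonal group is then $\{\pm1\}^{n+1}$, which is finite. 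Hence the stabilizer is finite and $[B]$ is GIT-stable.

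The main obstacle is Step 2: making the closedness argument rigorous. This requires checking that the snc condition is cut out by invariants, so that limits stay snc. Alternatively, one can argue directly with weights: if $\mu(F,\lambda)=0$, then the weight-zero conditions for two quadrics in a pencil with distinct eigenvalues force $\lambda$ to commute with their diagonalization. One then shows that $\lambda$ is trivial, which would give $\mu(F,\lambda) > 0$ for every non-trivial $\lambda$ when $k\ge2$. I would try this direct weight argument first, using the simultaneous diagonal form from Step 3.
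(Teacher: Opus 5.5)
Your argument is correct, but it reaches the contradiction by a different route from the paper. Both proofs begin the same way. They use additivity of $\mu$, and the permutation argument shows that $\mu(F,\lambda)=0$ forces $\mu(q_l,\lambda)=0$ for every $l$.

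From there the paper stays with the original matrices $A_l$. The vanishing pattern shows that the top-weight space $\bP(W_{\lambda_0})$ lies on every $B_l$, and that $A_l$ sends $W_{\lambda_0}$ into the $W_{-\lambda_0}$-coordinates through an invertible block $M_l$. A vector $x\in\ker(M_1-cM_2)$, for a nonzero root $c$ of $\det(M_1-tM_2)$, then gives a point of $B_1\cap B_2$ where the gradients are proportional, contradicting snc.

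You instead pass to the $\lambda$-fixed limits $\bar q_l$ and transport transversality of $B_1,B_2$ to them by invariance. You then use the classical structure of a pencil with $n+1$ distinct singular members, namely $O(\bar q_1)\cap O(\bar q_2)\cong\{\pm1\}^{n+1}$, to force $\lambda$ to be trivial.

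What each route buys: the paper's is self-contained linear algebra, with no limits and no theory of pencils. Yours is more conceptual, mirrors the stabilizer argument of Lemma~\ref{lem:GIT-of-(2,d)ci}, and makes finiteness of the stabilizer of $[B]$ explicit.

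The step you flagged in Step 2 needs tightening. As phrased, ``the limit of an orbit lying in this open invariant set stays in it'' is false for a general open invariant set. It works only because $\mu(q_l,\lambda)=0$ lets you take limits in the affine space without rescaling, and because the relevant condition is the nonvanishing of $\SL$-invariant polynomials.

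You do not need an invariant description of the whole snc locus, which would require knowing that every partial-intersection discriminant is an invariant hypersurface. Note also that the ``discriminants of the factors'' only detect smoothness of each $B_l$, not transversality. Only $B_1,B_2$ matter. For $g\in\SL(n+1)$ one has $\det(g^{-T}(uA_1+vA_2)g^{-1})=\det(uA_1+vA_2)$, so every coefficient of this binary form is invariant. Hence $\det(u\bar A_1+v\bar A_2)=\det(uA_1+vA_2)$.

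This single identity does three things:
\begin{enumerate}
\item it shows $\bar q_1,\bar q_2$ are nondegenerate, more cleanly than your ``contains the permutation terms'' remark;
\item it shows the limit pencil still has $n+1$ distinct roots;
\item since $\lambda(\bC^*)$ fixes $\bar q_1,\bar q_2$, it places $\lambda$ in a finite group, so $\mu(F,\lambda)>0$ for every nontrivial $\lambda$ and no orbit-dimension argument is needed.
\end{enumerate}

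Your alternative ``direct weight argument'' only makes sense when applied to these limits. Applied to $q_1,q_2$ themselves it fails, because $\lambda$ need not preserve them.
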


\begin{proof}
    Let $f_1,\ldots,f_k \in \mathbb C[x_0,\ldots,x_n]$ be degree $2$ homogeneous polynomials defining $k$ smooth quadrics $B_1,\ldots, B_k$ such that $B=\sum_{l=1}^k B_l$ is simple normal crossing. Let $\lambda \colon \mathbb C^*\rightarrow G$ be a one parameter subgroup of $G$ acting diagonally i.e., such that $\lambda(t)\cdot x_j = t^{\lambda_j}x_j$ with $\lambda_j \in\mathbb Z$ and $\sum_{i=0}^n \lambda_i =0$. We adopt the convention that for $F\in V$, we have $(\lambda(t)\cdot F)(x)=F(\lambda(t)^{-1}x)$ and without loss of generality, we order the the weights as $\lambda_0\geq \lambda_1 \geq \cdots \geq \lambda_{n}$.
    
    Since smooth quadrics are GIT-polystable (see \cite[Example~10.1]{dolgachev2003lectures}) it follows from the Hilbert-Mumford criterion that $\mu(f_l,\lambda)\geq 0$ for all $1\leq l\leq k$ and by additivity
     
\begin{equation} \label{eq:AddHM}
\mu\bigg(\prod_{l=1}^k f_l,\lambda \bigg)=\sum_{l=1}^k \mu \big( f_l,\lambda \big)\geq 0.   
\end{equation}
Assume $k\geq 2$ and let $F=\prod_{l=1}^k f_l$. If $\mu(F,\lambda)>0$, then the conclusion follows so we assume $\mu(F,\lambda)=0$ for all $\lambda$. In that that case we have $\mu(f_l,\lambda)=0$ for all $l$, or, equivalently for smooth quadrics, $\mu(f_l,\lambda)\leq 0$. Write $B_l=x^{T}A_lx$ for a symmetric $(n+1) \times (n+1)$ matrix $A_l$ for each $1\leq l\leq k$. Since 
$$
\mu(f_l,\lambda)=\max\{\lambda_i+\lambda_j \,\, \colon \,\, \text{the coefficient of $x_i x_j$ in $F$ is non-zero}\}
$$ 
the condition $\mu(f_l,\lambda)\leq 0$ is equivalent to 
\begin{equation} 
\label{eq:uppantidiagon}
((a_l)_{ij})=0 \quad \text{whenever} \,\, \lambda_i+\lambda_j>0    
\end{equation}
that is, the vanishing of the entry in $A_l$ corresponding to row $i$ and column $j$, for every $1\leq l\leq k$.  Hence, to show that $\mu(F,\lambda)>0$ it suffices to show that no $\lambda$ can make all $A_l$ satisfy condition \eqref{eq:uppantidiagon}. We assume, by contradiction, that there exists a non-trivial $\lambda$ such that for all $1\leq l\leq k$ the symmetric matrix $A_l$ satisfies condition \eqref{eq:uppantidiagon}. We will show that, in this case, any two quadrics will share a tangent direction, contradicting the simple normal crossing assumption.

For each $1\leq l\leq k$ we have 
$$
\det A_l = \sum_{\sigma \in S_{n+1}}\mathrm{sgn}(\sigma)(a_l)_{1,\sigma(1)}\cdots (a_l)_{n+1, \sigma(n+1)}.
$$
Since $A_l$ is invertible, there exists some $\sigma \in S_{n+1}$ such that we have $((a_l)_{i,\sigma(i)})\not =0 $ for all $i$, which is possible only if $\lambda_i+\lambda_{\sigma(i)} \leq 0$ by condition \eqref{eq:uppantidiagon}. But by definition of $\lambda$, we have $0=\sum \lambda_i+\sum \lambda_{\sigma (i)}=\sum (\lambda_i + \lambda_{\sigma(i)})$ which implies that $\lambda_i + \lambda_{\sigma{(i)}}=0$ for all $i$. Hence $\lambda$ is symmetric around $0$ and we may write $\lambda_n = -\lambda_0$, $\lambda_{n-1} = -\lambda_1$, etc. 

Consider the grading of $W = \bC^{n+1}$ induced by $\lambda$ given by $W=W_{\lambda_{0}} \oplus \bigoplus_{|\lambda|<\lambda_0} W_{\lambda}\oplus W_{-\lambda_{0}}$ where $W_{w}=\langle e_j \,\,\colon \,\, \lambda_j=w \rangle$ and $e_i$ for $0\leq i\leq n$ forms a basis for $W$. Let $d=\dim W_{\lambda_0}=\dim W_{-\lambda_0} \geq 1$. From the previous paragraph we can deduce that $A_l$ is of the form
$$
A_l=\begin{pmatrix}
  0 & 0 & M_l^T \\
  0 & * & * \\
  M_l & * & * 
\end{pmatrix}
$$
where the top left $0$ represents the $d\times d$ zero matrix and $M_l$ is some $d\times d$ matrix. Consider the linear space $L:=\bP(W_{\lambda_0})$. Then it is clear from the shape of $A_l$ that
\begin{enumerate}
    \item 
each quadric contains $L$;
\item for each point $x \in L$ (viewed as the point $x = \begin{pmatrix} x \\ 0 \end{pmatrix} \in W$ where the second entry $0$ represents the $0$ vector of length $n+1 - d$), we have $A_lx=\begin{pmatrix}  0 \\M_lx\end{pmatrix}$ where the first $0$ represents the $0$ vector of length $n+1 - d$, and
\item $M_l$ is invertible.
\end{enumerate}
This holds for each $1 \le l \le k$ by construction.  For simplicity, we will write condition (2) as $A_l x = M_l x$.  

Since the gradient of a quadric $B_l$ is given by $2A_l x$, if we find a common point $x$ of two quadrics, say $B_1$ and $B_2$, such that $A_1x=cA_2x$ for some non-zero complex number $c$, then we have shown $B_1$ and $B_2$ are tangent at $x$.  To see this, let $x\in L$ be a point in $\ker(M_1-tM_2)$. The determinant $\det (M_1-tM_2)$ is a polynomial of degree $d\geq 1$ in $t$ whose $t^d$ coefficient is $(-1)^d\det M_2$ and constant coefficient $\det M_1$, both non-zero. Hence $\det(M_1-tM_2)= 0$  has a non-zero solution $c$. Therefore we can choose $x$ non-zero. We conclude that
$$
A_1x = M_1x=cM_2x=cA_2x
$$
contradicting the fact that $B$ is simple normal crossing. Hence $\mu(F,\lambda)>0$ and by the Hilbert-Mumford criterion $[B]$ is GIT-stable.
\end{proof}


\section{K-stability of Fano varieties}

In this section, we include some background on K-stability and results that will be used in what follows.

\subsection{K-stability}

Most of the material in this section comes from \cite{Chenyang}. We work over the field of complex numbers. A pair $(X,\Delta)$ consists of a normal irreducible variety $X$ and a $\mathbb Q$-divisor $\Delta$ such that $K_X+\Delta$ is $\mathbb Q$-Cartier.  The pair $(X,\Delta)$ is projective if $X$ is projective.  A pair $(X,\Delta)$ is log Fano if $X$ is projective, $(X,\Delta)$ is klt and $-(K_X+\Delta)$ is ample. 


\begin{definition}[\cite{Tia97, Don02}]
Let $(X,\Delta=\sum_{i=1}^k b_i \Delta_i)$ be a pair and let $L$ be an ample line bundle on $X$.
\begin{enumerate}[label=(\alph*)]
\item A \emph{test configuration} of index $r$ $(\cX;\cL_r)/\bA^1$ of $(X;L)$ consists of the following data:
\begin{itemize}
 \item a variety $\cX$ together with a flat projective morphism $\pi:\cX\to \bA^1$;
 \item a $\pi$-ample line bundle $\cL_r$ on $\cX$;
 \item a $\bG_m$-action on $(\cX;\cL_r)$ such that $\pi$ is $\bG_m$-equivariant with respect to the standard action of $\bG_m$ on $\bA^1$ via multiplication;
 \item for $t \ne 0$, an isomorphism $\phi_t:\cX_t \cong X$ such that $\phi_t^*L^r = \calL_r$.
\end{itemize}
 
\item A \emph{test configuration} $(\cX,\mathcal{D};\cL_r)/\bA^1$ of $(X,\Delta;L)$ consists of the following data:
\begin{itemize}
\item a test configuration $(\cX;\cL_r)/\bA^1$ of $(X;L)$;
\item a formal sum $\mathcal{D}=\sum_{i=1}^k c_i \mathcal{D}_i$ of codimension one closed integral subschemes $\mathcal{D}_i$ of $\cX$ such that $\mathcal{D}_i$ is the Zariski closure of $\Delta_i\times(\bA^1\setminus\{0\})$ under the identification between $\cX\setminus\cX_0$ and $X\times(\bA^1\setminus\{0\})$.
\end{itemize}
\item A test configuration $(\cX,\mathcal{D};\cL_r)/\bA^1$ is called a \emph{normal} test configuration if $\cX$ is normal. 
A normal test configuration is called a \emph{product} test configuration if \[
(\cX,\mathcal{D};\cL_r)\cong(X\times\bA^1,\Delta\times\bA^1;pr_1^* L^r\otimes\cO_{\cX}(k\cX_0))
\] for some $k\in\bZ$. A product test configuration is called a \emph{trivial} test configuration if the above isomorphism is $\bG_m$-equivariant with respect to the trivial $\bG_m$-action on $X$ and the standard $\bG_m$-action on $\bA^1$. 
\item 
Let $(X,\Delta)$ be a log Fano pair. Let $L$ be an ample $\bQ$ line bundle on $X$ such that $L\sim_{\bQ}-(K_X+\Delta)$. A normal test configuration $(\cX,\mathcal{D};\cL_r)/\bA^1$ is called a \emph{special test configuration} if $\cL_r\sim_{\bQ}-r(K_{\cX/\bA^1}+\mathcal{D})$ and $(\cX,\mathcal{D}+\cX_0)$ is plt. In this case, we say that $(X,\Delta)$ \emph{specially degenerates to} $(\cX_0,\mathcal{D}_0)$ which is necessarily a log Fano pair.
\item Given a test configuration $(\cX,\mathcal{D};\cL_r)/\bA^1$ we may glue this to the trivial test configuration $(X,\Delta; L^r) \times \bA^1 \to \bA^1 $ along $\bA^{1} \setminus \{ 0\}$ to produce the \textit{$\infty$-trivial} compactification $\overline{\pi}: (\overline{\cX}, \overline{\mathcal{D}}; \overline{\cL}_r) \to\bP^1$.
\end{enumerate}
\end{definition}

Associated to any test configuration, we can define a \textit{Futaki invariant}, coming from the weight of the $\bG_m$-action.  We provide a definition only in the case of special test configurations due to \cite{LX14}.  

\begin{definition}
    For any log Fano pair $(X,\Delta)$ and special test configuration $(\calX, \mathcal{D}; \calL_r)$, the \textit{Futaki invariant} is 
    \[ \Fut(\calX, \mathcal{D}; \calL_r) = - \frac{1}{2(-K_X-\Delta)^n(n+1)} \left(-K_{\overline{\calX}/\bP^1} - \overline{\mathcal{D}}\right)^{n+1} ,\]
    where $(\overline{\calX},\overline{\mathcal{D}})$ is the $\infty$-trivial compactification.
\end{definition}

\begin{definition}
     Let $(X,\Delta)$ be a log Fano pair.  Then, $(X,\Delta)$ is said to be:
\begin{enumerate}[label=(\roman*)]
    \item \emph{K-semistable} if $\Fut(\cX,\mathcal{D};\cL_r)\geq 0$ for any special test configuration of any index $r$;
    
    \item  \emph{K-stable} if it is K-semistable and $\Fut(\cX,\mathcal{D};\cL_r)=0$ for a special test configuration $(\cX,\mathcal{D};\cL_r)/\bA^1$ if and only if it is a trivial test configuration; and
 
\item \emph{K-polystable} if it is K-semistable and $\Fut(\cX,\mathcal{D};\cL_r)=0$ for a special test configuration $(\cX,\mathcal{D};\cL_r)/\bA^1$ if and only if it is a product test configuration.
\end{enumerate}
\end{definition}

 We repeatedly use the fact that if $(X,\Delta)$ is a log Fano pair such that  $\mathrm{Aut}(X,\Delta)$ is finite, then $(X,\Delta)$ is K-polystable if and only if it is K-stable, which follows from \cite{blum2019uniqueness} (see \cite[Corollary 2.2.5]{cheltsov2026kstabilityfanosfanos}).

The K-stability of a Fano variety or log Fano pair places constraints on its singularities: 

\begin{theorem}\cite{Odaka}
    Suppose $(X,\Delta)$ is a K-semistable log Fano pair.  Then, $(X,\Delta)$ has klt singularities. 
\end{theorem}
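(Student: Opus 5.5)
The plan is to argue by contradiction and produce a destabilizing test configuration from a divisor with non-positive log discrepancy, following Odaka's strategy. I read the statement as applying to a pair $(X,\Delta)$ with $-(K_X+\Delta)$ ample and $\bQ$-Cartier, without assuming klt a priori. Suppose $(X,\Delta)$ is not klt. Then there is a prime divisor $E$ over $X$, on some proper birational model $\mu\colon Y\to X$, with log discrepancy $A_{X,\Delta}(E)\le 0$.

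\textbf{Reduction to a special valuation.} I would first pass to a well-chosen model. If $(X,\Delta)$ is not even log canonical, I would take the lc modification (as in Odaka–Xu) to replace the bad locus by a divisor of negative log discrepancy. If $(X,\Delta)$ is lc but not klt, I would take a dlt modification $\mu\colon Y\to X$, whose exceptional divisors all have $A=0$. In either case the result is an exceptional divisor $E$ with $A_{X,\Delta}(E)\le 0$ and $-E$ relatively ample over $X$ after a small perturbation. Let $\mathfrak a_m=\mu_*\calO_Y(-mE)$ be the associated ideals on $X$.

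\textbf{The test configuration and the sign of its invariant.} I would build the test configuration as the (normalized) deformation to the normal cone along the filtration $\{\mathfrak a_m\}$. Concretely, blow up $X\times\bA^1$ along the flag ideal $\sum_m \mathfrak a_m t^{m}$, with polarization $L-\epsilon \mathcal{E}$ for small $\epsilon$. I would then compute its Donaldson–Futaki invariant using the intersection formula for $\Fut$, extended from the special case to normal test configurations. The invariant splits into two parts. The discrepancy part is $\epsilon\, A_{X,\Delta}(E)\,(\text{positive volume term})$, which is $\le 0$. The remaining part is a correction of order $\epsilon^{2}$ or higher; equivalently, it has the form $A(E)-S(E)$, where $S(E)=\frac1{\mathrm{vol}}\int_0^\infty \mathrm{vol}(L-tE)\,dt>0$. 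So the leading term is controlled by $A_{X,\Delta}(E)-S(E)<0$. This yields $\Fut<0$ (in the valuative language, $\beta(E)<0$), contradicting K-semistability.

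\textbf{Main obstacle.} The hard part is that the clean valuative criterion $\beta\ge 0$ of Fujita–Li is proved for log Fano pairs, i.e.\ it presupposes klt. So the computation must be done directly on the non-normal or general test configuration, with care about normalization. Normalizing only decreases $\Fut$, which is the favorable direction. One also needs that the $\epsilon$-leading coefficient really is $A_{X,\Delta}(E)$ times a positive intersection number. For the lc but non-klt case where $A=0$, the first-order term vanishes. There I would use the second-order term $-\epsilon^{2}(\cdots)$: I expect it to be strictly negative by the Hodge index inequality, since $E$ is $\mu$-exceptional and $-E$ is $\mu$-ample. This is Odaka's key computation, and I would reproduce it there.
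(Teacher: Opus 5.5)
The paper gives no proof of its own. It cites Odaka, and since its log Fano pairs are klt by definition, the statement is formally tautological there. Read as Odaka's theorem (with $\Fut$ tested on all normal test configurations, as you do), your blueprint is Odaka's, but the decisive lc-but-not-klt step is not established, for three reasons.

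First, the $\epsilon$-expansion is misdescribed. Take the normalized blow-up $\mathcal{X}$ of $X\times\bA^1$ along $\mathfrak{a}+(t)$; note that your $\sum_m \mathfrak{a}_m t^m$ contains $\mathfrak{a}_0=\cO_X$, so you mean something like $\sum_{m\le N}\mathfrak{a}_m t^{N-m}$. Polarize by $\mathcal{L}=p^*L-\epsilon\mathcal{E}$, and let $c\ge 2$ be the codimension of the center in $X\times\bA^1$. Then the discrepancy part has order $\epsilon^{c-1}$ and the canonical part has order $\epsilon^{c}$, and neither equals $A(E)-S(E)$. That quantity is, up to a positive factor, the Futaki invariant of the test configuration of the whole filtration of $E$ when that filtration is finitely generated. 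Concluding from $A-S<0$ is exactly the valuative criterion, which you yourself note presupposes klt; if it were available, $A\le 0<S$ would finish the proof at once.

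Second, the discrepancy part vanishes at lc places only if you use $K^{\log}_{\mathcal{X}/X\times\bA^1}$. Its coefficient on $\mathcal{E}_i$ is $A_{X,\Delta}(r(\mathrm{ord}_{\mathcal{E}_i}))$, where $r$ is restriction to $\bC(X)$. The actual relative canonical divisor has coefficient $A_{X,\Delta}(r(\mathrm{ord}_{\mathcal{E}_i}))+\mathrm{ord}_{\mathcal{E}_i}(t)-1$. Here $\mathrm{ord}_{\mathcal{E}_i}(t)=\mathrm{ord}_{E_i}(\mathfrak{a})$, which is large for $\mathfrak{a}=\mu_*\cO_Y(-mE)$, so this coefficient is positive and dominates. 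Normalization does not cure this. You need a normalized base change of degree $d$ with reduced central fiber, whose $\Fut$ is $d$ times the $K^{\log}$ expression; this applies to your non-lc case as well.

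Third, once the discrepancy part is gone, the sign of what remains does not follow from Hodge index and exceptionality of $E$. This is exactly where $L=-(K_X+\Delta)$ must enter. For this polarization the canonical part is a positive multiple of
\[
n\,\mathcal{L}^{n+1}-(n+1)\,\mathcal{L}^{n}\cdot p^{*}L=\sum_{j\ge 2}\Big[n\tbinom{n+1}{j}-(n+1)\tbinom{n}{j}\Big](p^{*}L)^{n+1-j}\cdot(-\epsilon\mathcal{E})^{j}.
\]
The brackets are positive for $j\ge 2$, and $(p^*L)^{n+1-c}\cdot(-\mathcal{E})^{c}<0$ because $-\mathcal{E}$ is relatively ample over $X\times\bA^1$; hence $\Fut<0$. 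If instead $K_X+\Delta\equiv 0$, the canonical part vanishes identically and $\Fut=0$ at lc places, consistent with Theorem~\ref{odaka-CY}. So an argument that does not use the anticanonical polarization at this step cannot work.

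The reduction producing $E$ also fails as stated. A dlt modification plus a small perturbation need not give an effective $E$ supported on lc places with $-E$ relatively ample. If $(X,\Delta)$ is plt with $\lfloor\Delta\rfloor\ne 0$ (e.g.\ $(\bP^n,H)$), there is no exceptional lc place at all. If $X$ is not $\bQ$-factorial, the dlt modification can be small over parts of $X$, where no combination of exceptional divisors is relatively ample. And without relative ampleness, the Rees valuations of $\mu_*\cO_Y(-mE)$ need not be components of $E$.

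What works is the following. On a dlt modification $g\colon Y\to X$, write $K_Y+\Delta_Y^{<1}+E_Y=g^*(K_X+\Delta)$, where $E_Y\ne 0$ is the reduced sum of all lc places on $Y$, including the strict transform of $\lfloor\Delta\rfloor$. Then $-\delta E_Y\sim_{\bQ,X}K_Y+\Delta_Y^{<1}+(1-\delta)E_Y$ with $(Y,\Delta_Y^{<1}+(1-\delta)E_Y)$ klt, so $\bigoplus_m g_*\cO_Y(-mE_Y)$ is finitely generated by BCHM. Its relative Proj $g'\colon Y'\to X$ carries $E_{Y'}\ne 0$, supported on lc places, with $-E_{Y'}$ relatively ample. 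For divisible $m$, the Rees valuations of $\mathfrak{a}=g'_*\cO_{Y'}(-mE_{Y'})$ are exactly the components of $E_{Y'}$. The Rees valuations of $\mathfrak{a}+(t)$ restrict to multiples of these or to the trivial valuation. Since $A_{(X\times\bA^1,\Delta\times\bA^1+X\times 0)}(v)=A_{X,\Delta}(r(v))$ for $\bG_m$-invariant $v$, all $K^{\log}$ coefficients vanish, as needed.

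In the non-lc case, the Odaka--Xu lc modification does give what you want: there $\sum_i A_{X,\Delta}(E_i)E_i$ is relatively ample and every $A_{X,\Delta}(E_i)<0$, subject to the base-change point above.
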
 

Using the general formula for the Futaki invariant (cf. \cite{Chenyang}), one can define K-stability for an arbitrary polarized variety.  In the case that $(X, \Delta)$ is a log Calabi-Yau pair, i.e. $K_X + \Delta \sim_{\bQ} 0$, this is equivalent to having klt or log canonical singularities: 

\begin{theorem}\cite{Odaka}\label{odaka-CY}
    A log Calabi-Yau pair $(X,\Delta)$ is K-stable (respectively, K-semistable) if and only if $(X,\Delta)$ is klt (respectively, log canonical). 
\end{theorem}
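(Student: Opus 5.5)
We would prove this directly from the intersection-theoretic formula for the Donaldson--Futaki invariant of a polarized pair. The plan is to use that the hypothesis $K_X+\Delta\sim_{\bQ}0$ removes the ``slope'' term, so the invariant reduces to a weighted sum of log discrepancies. Fix any ample $L$ on $X$ and a normal test configuration $(\cX,\mathcal{D};\cL)/\bA^1$ with $\infty$-trivial compactification $(\overline{\cX},\overline{\mathcal D};\overline{\cL})\to\bP^1$. The general formula (cf.\ \cite{Chenyang}) gives
\[
\Fut(\cX,\mathcal D;\cL)=\frac{1}{L^n}\Big(\tfrac{n}{n+1}\,\mu\,\overline{\cL}^{\,n+1}+\overline{\cL}^{\,n}\cdot(K_{\overline{\cX}/\bP^1}+\overline{\mathcal D})\Big),\qquad \mu=\frac{-(K_X+\Delta)\cdot L^{n-1}}{L^n}.
\]
Since $K_X+\Delta\equiv0$, we have $\mu=0$, and only the second term survives.

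Next we rewrite $K_{\overline{\cX}/\bP^1}+\overline{\mathcal D}$ in terms of $X_0$.
\begin{enumerate}
\item Take a common $\bG_m$-equivariant resolution $p\colon\mathcal Y\to X\times\bP^1$ and $q\colon\mathcal Y\to\overline{\cX}$.
\item Because $K_X+\Delta\sim_{\bQ}0$, we can write $K_{\overline{\cX}/\bP^1}+\overline{\mathcal D}\sim_{\bQ}\sum_E c_E E$, where $E$ runs over the irreducible components of $\cX_0$.
\item Each $E$ induces a $\bG_m$-invariant divisorial valuation on $X\times\bA^1$. Its restriction is $b_E\cdot v_E$, with $b_E=\mathrm{ord}_E(t)$ and $v_E$ either trivial or divisorial on $X$.
\item The standard computation with the log discrepancy of $(X\times\bA^1,\Delta\times\bA^1+X\times\{0\})$ gives $c_E=b_E\,A_{(X,\Delta)}(v_E)$ for $v_E$ nontrivial, and $c_E=0$ for the strict transform of $X\times\{0\}$.
\end{enumerate}
Substituting into the formula yields
\[
\Fut=\frac{1}{L^n}\sum_E b_E\,A_{(X,\Delta)}(v_E)\,(\overline{\cL}^{\,n}\cdot E),
\]
and every intersection number $\overline{\cL}^{\,n}\cdot E$ is positive because $\overline{\cL}$ is relatively ample.

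From this formula the ``if'' directions follow quickly. If $(X,\Delta)$ is lc, then every $A_{(X,\Delta)}(v_E)\ge0$, so $\Fut\ge0$ and the pair is K-semistable. If $(X,\Delta)$ is klt, then every $A_{(X,\Delta)}(v_E)>0$, so $\Fut=0$ forces $\cX_0$ to consist only of the strict transform of $X\times\{0\}$. Normality of $\cX$ then makes the test configuration trivial up to twisting $\cL$ by $\cX_0$, which gives K-stability.

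For the converses we produce a destabilizing test configuration from a bad divisor. If $(X,\Delta)$ is not lc, pick a prime divisor $F$ over $X$ with $A_{(X,\Delta)}(F)<0$. If it is lc but not klt, pick $F$ with $A_{(X,\Delta)}(F)=0$. Then build the test configuration associated with the valuation $\mathrm{ord}_F$: the deformation to the normal cone of the ideals $\mathfrak a_m=\{\mathrm{ord}_F\ge m\}$, or equivalently the filtration test configuration, normalized. The hope is that its central fiber has a unique non-trivial component $E$ with $v_E=\mathrm{ord}_F$. Then the formula gives $\Fut<0$ in the first case, and $\Fut=0$ for a nontrivial test configuration in the second.

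The main obstacle is this last step: guaranteeing that the constructed test configuration is genuinely nontrivial and that its exceptional components realize exactly $\mathrm{ord}_F$, or at least valuations with $A_{(X,\Delta)}\le A_{(X,\Delta)}(F)$. We would first try Odaka's approach: take $\cX$ to be the normalized blow-up of $X\times\bA^1$ along the flag ideal $\mathfrak a_m+(t^m)$ for $m$ large, polarized by $L-\epsilon E$ with $\epsilon\ll 1$. For this configuration the relevant components can be controlled explicitly.
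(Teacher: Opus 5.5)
The paper gives no proof of this statement. It is quoted from \cite{Odaka}, and only the two ``if'' directions are used later, in Proposition~\ref{prop:KYnefimpliesKstabofpair}. Your argument for those two directions works, but step~4 is miscomputed. The quantity $b_E\,A_{(X,\Delta)}(v_E)$ is the coefficient of $E$ in the \emph{log} relative canonical divisor $K_{\overline{\cX}/\bP^1}+\overline{\cD}+\overline{\cX}_{0,\mathrm{red}}-\overline{\cX}_0$; that is what adding $X\times\{0\}$ to the boundary computes. Comparing with $(X\times\bP^1,\Delta\times\bP^1)$ instead gives
$c_E=A_{(X\times\bA^1,\Delta\times\bA^1)}(\mathrm{ord}_E)-1=b_E\big(A_{(X,\Delta)}(v_E)+1\big)-1$.
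So, in your normalization, $\Fut=\frac{1}{L^n}\sum_E\big(b_EA_{(X,\Delta)}(v_E)+b_E-1\big)\,\overline{\cL}^{\,n}\cdot E$. Your sum omits the term $(\overline{\cX}_0-\overline{\cX}_{0,\mathrm{red}})\cdot\overline{\cL}^{\,n}$; what you computed is the non-Archimedean entropy of Boucksom--Hisamoto--Jonsson, which agrees with the Futaki invariant only when $\cX_0$ is reduced. For a concrete check, take $X$ an elliptic curve, blow up $X\times\bA^1$ at $(p,0)$, then blow up the point where the exceptional curve meets the strict transform of $X\times\{0\}$. The new curve has $b_E=2$, $v_E=\tfrac12\mathrm{ord}_p$, and coefficient $2$, not $1$. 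Since $b_E\ge 1$, the correction is non-negative, so ``lc $\Rightarrow$ K-semistable'' and ``klt $\Rightarrow$ K-stable'' go through unchanged.

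The genuine gap is that the two converse implications are not proved; you leave them as a hope. The normalized blow-up of $X\times\bA^1$ along $\mathfrak a_m+(t^m)$, or along $\sum_j\mathfrak a_jt^{m-j}$, generally has several non-trivial components. These are the Rees valuations of the flag ideal, and their restrictions to $X$ include Rees valuations of the $\mathfrak a_j$ other than $\mathrm{ord}_F$. For example, for the $(1,2)$-weighted blow-up of $\bA^2$ with $m$ odd, $\mathfrak a_m$ also has $\mathrm{ord}_0$ as a Rees valuation. Such components may have positive log discrepancy, and each enters with positive weight $\overline{\cL}^{\,n}\cdot E$. Moreover, by the corrected formula, a component with $b_E>1$ contributes an extra $(b_E-1)\,\overline{\cL}^{\,n}\cdot E$. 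So even a component with $v_E$ proportional to $\mathrm{ord}_F$ need not make $\Fut$ negative unless you first pass to a normalized base change with reduced central fibre. In the non-lc case you therefore need a real asymptotic argument. One option is an expansion of $\Fut$ for $\cL=L-\epsilon E$ as $\epsilon\to0$ whose leading coefficient is a discrepancy term, with the ideal chosen (from a log resolution or an lc modification) so that every component governing that term has negative log discrepancy; this is where the real work in the literature lies. In the lc-but-not-klt case you need \emph{exact} vanishing of $\Fut$ on a non-trivial test configuration. That means one with reduced central fibre, all of whose non-trivial components are lc places of $(X,\Delta)$ (for instance, one coming from finite generation of the filtration induced by an lc place). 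A limiting argument in $m$ would only show that $(X,\Delta)$ is not uniformly K-stable. As written, neither ``only if'' implication is established.
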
 

For the definition of klt and lc singularities, we refer the reader to \cite{KM,Kol13}.

\subsection{K-stability of quotients and cyclic covers}

K-stability behaves well with respect to finite covers: 

\begin{theorem}\cite{liu2022equivariant, ZhuangEquivariant}\label{thm:finitecovers}
    Let $X$ be a Fano variety and $G$ a finite group acting on $X$.  Let $Z = X/G$ and $\pi: X \to Z$ the quotient map.  Define $Y \subset Z$ as the divisor such that the following equality holds: 
    \[ K_X = \pi^*(K_Z + Y).\]
    Then, $X$ is K-semistable (respectively, polystable) if and only if $(Z,Y)$ is K-semistable (respectively, polystable). 
\end{theorem}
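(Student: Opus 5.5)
The plan is to work with the valuative criterion (Fujita--Li) rather than with test configurations directly. In that language $X$ is K-semistable iff $\beta_X(F) = A_X(F) - S_X(F) \ge 0$ for every prime divisor $F$ over $X$, and similarly for the log Fano pair $(Z,Y)$. Note that $(Z,Y)$ is log Fano: $-(K_Z+Y)$ is ample because its pullback $-K_X$ is, and klt-ness of $(Z,Y)$ is equivalent to klt-ness of $X$ by the standard comparison of discrepancies under finite morphisms \cite{KM}.

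\textbf{Step 1: compare invariants across $\pi$.} Let $E$ be a prime divisor over $Z$. Its preimage under the normalized base change of a model of $E$ is a $G$-orbit of prime divisors $F_1,\dots,F_m$ over $X$, all with the same ramification index $e$ along $E$. The Riemann--Hurwitz formula $K_X = \pi^*(K_Z+Y)$ gives
\[ A_X(F_j) = e\,A_{(Z,Y)}(E). \]
For the expected vanishing order, $\pi^*$ identifies $H^0(Z, -m(K_Z+Y))$ with $H^0(X,-mK_X)^G$. Using this, together with the fact that $G$-averaging a section does not decrease its order along the orbit, one checks that the induced filtrations correspond with the same scaling, so $S_X(F_j) = e\,S_{(Z,Y)}(E)$. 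Hence $\beta_X(F_j) = e\,\beta_{(Z,Y)}(E)$. Conversely, every $G$-invariant divisorial valuation on $X$ (more precisely, every $G$-orbit of divisors) arises this way from a divisor over $Z$. Therefore $(Z,Y)$ is K-semistable iff $\beta_X \ge 0$ on all $G$-invariant valuations, i.e. iff $X$ is $G$-equivariantly K-semistable.

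\textbf{Step 2: equivariant versus non-equivariant semistability.} This is the main obstacle: I must show that if $X$ is $G$-equivariantly K-semistable then it is K-semistable. I would argue by contradiction via optimal destabilization. If $\delta(X)<1$, the results of Blum--Liu--Xu--Zhuang provide a valuation computing $\delta(X)$ that comes from the \emph{unique} optimal destabilizing degeneration. Uniqueness forces this object to be fixed by $\Aut(X) \supset G$, and hence to be $G$-invariant. This produces a $G$-invariant valuation with $\beta<0$, which by Step 1 contradicts the K-semistability of $(Z,Y)$. Alternatively, as in \cite{ZhuangEquivariant}, one can run the minimal model program $G$-equivariantly to produce a $G$-invariant destabilizing special degeneration.

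\textbf{Step 3: polystability.} Assume K-semistability on both sides and use special test configurations. A $G$-equivariant special test configuration of $X$ has a quotient by $G$, which is a special test configuration of $(Z,Y)$ with boundary given by the branch divisor. Conversely, the normalized fibre product of a special test configuration of $(Z,Y)$ with $X$ yields a $G$-equivariant special test configuration of $X$. In both directions the Futaki invariants differ by the positive factor $|G|$, and product configurations correspond to product configurations.

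Now let $X$ be K-semistable. By the uniqueness of the K-polystable degeneration \cite{blum2019uniqueness}, any Futaki-zero special degeneration of $X$ can be replaced by a $G$-equivariant one with the same central fibre up to isomorphism. Hence $X$ is K-polystable iff every $G$-equivariant Futaki-zero special test configuration is a product. By the correspondence above, this holds iff $(Z,Y)$ is K-polystable.
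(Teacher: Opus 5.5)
Two steps fail as written.

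\textbf{Step 1 contains a false identity.} The claim $S_X(F_j)=e\,S_{(Z,Y)}(E)$ fails whenever the orbit $F_1,\dots,F_m$ has $m>1$. Take $X=\bP^1$ with $G=\bZ/2$ acting by $x\mapsto -x$. Then $Z=\bP^1$ and $Y=\tfrac12[0]+\tfrac12[\infty]$. Let $E$ be the unbranched point $1\in Z$, so $F_{1,2}=\{\pm 1\}$ and $e=1$. Then $S_X(F_1)=1$ but $S_{(Z,Y)}(E)=\tfrac12$, so $\beta_X(F_1)=0\neq\tfrac12=\beta_{(Z,Y)}(E)$.

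A sanity check points the same way. If your identity held for every orbit, Step 2 would be superfluous, since every divisor over $X$ lies over some $E$. An orbit with $m>1$ is not a $G$-invariant valuation, so your ``more precisely'' conflates two different things.

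What is true is $\mathrm{vol}(-K_X-te\sum_j F_j)=|G|\,\mathrm{vol}(-(K_Z+Y)-tE)$ on a $G$-equivariant model. Hence $S_X(F_j)\ge e\,S_{(Z,Y)}(E)$ and $\beta_X(F_j)\le e\,\beta_{(Z,Y)}(E)$, with equality when $m=1$. More generally, $\beta_X(v)=\beta_{(Z,Y)}(v|_{K(Z)})$ for $G$-invariant valuations $v$. This corrected version suffices. The inequality gives $X$ K-semistable $\Rightarrow$ $(Z,Y)$ K-semistable. The equality, together with Step 2, gives the converse.

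Step 2 also needs a correction. The Blum--Liu--Xu--Zhuang object is the unique minimizer of the $H$-invariant, not a valuation computing $\delta(X)$, and it is only quasi-monomial. So you must use $\beta\le H$ together with the equality for $G$-invariant quasi-monomial valuations.

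\textbf{The genuine gap is in Step 3.} You claim that ``by the uniqueness of the K-polystable degeneration, any Futaki-zero special degeneration of $X$ can be replaced by a $G$-equivariant one with the same central fibre.'' This does not follow from \cite{blum2019uniqueness}. Uniqueness only says that the translates $g\cdot\cX$, $g\in G$, of a special degeneration $\cX$ of $X$ to $X^{ps}$ all have central fibre isomorphic to $X^{ps}$. It does not produce a single test configuration preserved by $G$. For Futaki-zero degenerations with non-polystable central fibre there is no uniqueness statement to invoke at all.

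For the direction ``$(Z,Y)$ K-polystable $\Rightarrow$ $X$ K-polystable'' you need exactly this: a K-semistable, non-K-polystable $X$ admits a non-product $G$-equivariant special test configuration with $\Fut=0$, for example a $G$-equivariant degeneration to $X^{ps}$. This is the substantive content of the equivariant polystability results in \cite{liu2022equivariant, ZhuangEquivariant}, which the paper imports rather than reproves, and it needs its own argument. One way is as follows. Work in a local quotient presentation $[W/\Aut(X^{ps})]$ of the K-moduli stack, where $\Aut(X^{ps})$ is reductive. Take a Kempf-type optimal one-parameter subgroup driving $[X]$ to $[X^{ps}]$; it can be chosen to commute with the finite group $G\subseteq\mathrm{Stab}([X])$. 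As written, Step 3 assumes this rather than proving it.

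The opposite direction of Step 3 is fine: lift Futaki-zero special test configurations of $(Z,Y)$ by normalized base change, after a base change $t\mapsto t^d$ if the central fibre becomes non-reduced.
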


We will also use the following Lemma to relate K-stability of an $n-1$-dimensional Fano variety to K-stability of log Fano pairs of dimension $n$, which is a special case of \cite[Proposition 2.11]{LiuZhuangSharpness}. 

\begin{definition}
    Let $Y$ be a projective variety and $\calL$ an ample $\bQ$-line bundle on $Y$.  The \textbf{projective orbifold cone} over $Y$ with respect to $\calL$ is 
    \[ C_p(Y, \calL) = \Proj \big( \sum_{m \ge 0} \sum_{i+j = m} H^0(Y,\calL^{[i]})\cdot z^j\big).\]
\end{definition}

\begin{lemma}\label{lem:Liu-Zhuang-cone}
    Let $Z$ be a Gorenstein Fano variety of dimension $n$ and let $Y \subset Z$ be a Fano hypersurface $Y \in |\calO_Z(d)|$ for some $d \ge 2$, where $\calO_Z(1)$ is a very ample line bundle on $Z$ such that $\calO(-K_Z) \cong \calO_Z(k)$ for some $0<k \le d(n+1)$.  
    Let $C_Y = C_p\big(Y, \mathcal{O}(d)\big)$ be the projective orbifold cone over $Y$ with respect to the line bundle $\mathcal{O}(d)$. Then, $Y$ is K-polystable if and only if $\big(C_Y, (1 - \frac{k-d}{nd})Y\big)$ is K-polystable, where $Y \subset C_Y$ is the hyperplane section at infinity. 
\end{lemma}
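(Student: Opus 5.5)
The plan is to reduce the statement to the cone construction of \cite[Proposition 2.11]{LiuZhuangSharpness}, checking that our numerics are exactly the ones that construction requires.

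\textbf{Numerics.} By adjunction $-K_Y\cong\calO_Y(k-d)$, and $Y$ is Fano, so $k>d$. Set $L=\calO_Y(d)$ and $r=\frac{k-d}{d}>0$, so that $-K_Y\sim_{\bQ} rL$. The cone $C_Y=C_p(Y,L)$ has dimension $n$. For the section at infinity $V\cong Y$ we have the standard identity $-K_{C_Y}\sim_{\bQ}(1+r)V$. Hence, with $c=1-\frac{r}{n}=1-\frac{k-d}{nd}$,
\[-(K_{C_Y}+cV)\sim_{\bQ}(1+r-c)V,\]
which is ample because $c<1<1+r$. The hypothesis $k\le d(n+1)$ is equivalent to $r\le n$, that is, $c\ge 0$, so the boundary is effective.

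\textbf{Log Fano check.} Away from the vertex, $(C_Y,cV)$ is locally a product of $Y$ with a smooth curve and the divisor $cV$ with $c<1$. Since $Z$ is Gorenstein and the $Y$ of interest are K-polystable, hence klt by \cite{Odaka}, this part of the pair is klt. At the vertex, take the exceptional divisor $E\cong Y$ of the weighted blowup, i.e. the zero section of $L^{-1}$. Its log discrepancy is $A(E)=r$, which is positive, so $(C_Y,cV)$ is a log Fano pair.

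\textbf{K-stability comparison.} The pair carries the $\bG_m$-action scaling the fibres of the cone. By the equivariant criteria (Zhuang, Li--Wang--Xu), K-polystability of $(C_Y,cV)$ can be tested on $\bG_m$-invariant valuations and $\bG_m$-equivariant special test configurations. I would carry this out in three steps.
\begin{enumerate}
\item Compute $S(E)$ for $-(K_{C_Y}+cV)$ and check that $\beta(E)=A(E)-S(E)=0$ exactly when $c=1-\frac{r}{n}$. This is how the coefficient is determined, and it means the Futaki invariant of the product test configuration induced by the vertex $\bG_m$ vanishes.
\item Identify the $\bG_m$-invariant valuations on $C_Y$ with pairs consisting of a valuation $v$ on $Y$ and a shift $t\cdot\mathrm{ord}_E$. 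Show that $\beta_{(C_Y,cV)}(v_t)$ equals a positive multiple of $\beta_Y(v)$, and in particular is independent of $t$.
\item Correspondingly, show that special test configurations of $Y$ induce $\bG_m$-equivariant special test configurations of $(C_Y,cV)$ by taking cones fibrewise, and that every $\bG_m$-equivariant special test configuration arises this way, up to twisting by the vertex $\bG_m$.
\end{enumerate}
Together these show that nonnegativity of Futaki invariants, and vanishing exactly on product configurations, transfer in both directions. This gives K-semistability in both directions, and the characterisation of product configurations gives polystability.

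\textbf{Main obstacle.} The hardest step is the $\beta$-comparison in step (2). It requires computing the volumes of the filtrations induced by $v_t$ on the graded ring $\bigoplus_{m}\bigoplus_{i+j=m}H^0(Y,L^{i})z^j$. One then checks that the $t$-dependence cancels precisely because of the choice of $c$. I would follow Li's cone computation, integrating $\mathrm{vol}_Y(L-sv)$ against the fibre coordinate. The remaining step is to match product test configurations on both sides, using that automorphisms of $(C_Y,V)$ are generated by $\Aut(Y,L)$ and the vertex $\bG_m$. This is the content of \cite[Proposition 2.11]{LiuZhuangSharpness}, and our lemma is its specialisation to $r=\frac{k-d}{d}$.
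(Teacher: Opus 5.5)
Your proposal is correct and follows the same route as the paper. You set $L=\calO_Y(d)$ and use adjunction to get $-K_Y\sim_{\bQ} rL$ with $r=\frac{k-d}{d}$, note that $Y$ Fano gives $r>0$ and $k\le d(n+1)$ gives $r\le n$, and then invoke \cite[Proposition 2.11]{LiuZhuangSharpness} with coefficient $1-\frac{r}{n}=1-\frac{k-d}{nd}$. Your extra sketch of why that proposition holds is consistent with the standard cone argument but is not needed, since the paper simply cites the result.
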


\begin{proof}
    We follow the notation in \cite[Proposition 2.11]{LiuZhuangSharpness}.  Letting $M = \mathcal{O}_Z(d)|_Y$, then by adjunction $\deg M = \deg (-\frac{d}{k-d} K_Y)$.  Note that the hypothesis that $Y$ is Fano implies $d < k$.  Let $r = \frac{k-d}{d} \le n$. Notice that $r>0$ since $Y$ is Fano and $r \le n$ by the assumption that $k \le d(n+1)$. Then, by \cite[Proposition 2.11]{LiuZhuangSharpness}, $Y$ is K-polystable if and only if $\Big(C_Y, \big(1 - \frac{r}{n}\big)Y\Big)$ is K-polystable, which is precisely the pair $\Big(C_Y, (1 - \frac{k-d}{nd})Y\Big)$.
\end{proof}

For $(Z,Y)$ as in the previous lemma, the K-polystability of $Y$ implies that $\big(C_Y, (1 - \frac{k-d}{nd})Y\big)$ is K-polystable.  By \cite[Theorem 1.3]{Odaka}, the K-polystability of $Y$ also implies that $Y$ is klt and hence $(Z,Y)$ is a plt pair by \cite[Proposition 5.50]{KM}.  Then, by \cite[Lemma 2.12]{LiuZhuangSharpness}, the pair $(Z,Y)$ specially degenerates to $(C_Y,Y)$: there exists an isotrivial family over $\bA^1$ with generic fiber over $t \ne 0$ $(Z,Y)$ and special fiber over $t = 0$ $(C_Y,Y)$.  As K-semistability is an open condition in families by \cite{blum2022openness}, the K-(semi/poly)stability of $(C_Y, \alpha Y)$ for any $\alpha$ will therefore imply the K-semistability of the pair $(Z, \alpha Y)$.  Combined with Lemma \ref{lem:Liu-Zhuang-cone}, we conclude the following: 

\begin{proposition}\label{prop:KstabofYimpliesKstabofpair}
    Let $Z$ be a Gorenstein Fano variety of dimension $n$ and let $Y \subset Z$ be a Fano hypersurface $Y \in |\calO_Z(d)|$ for some $d \ge 2$, where $\calO_Z(1)$ is a very ample line bundle on $Z$ such that $\calO(-K_Z) \cong \calO_Z(k)$ for some $0<k \le d(n+1)$.  If $Y$ is K-polystable, then the pair $(Z,(1 - \frac{k-d}{nd})Y)$ is K-semistable.
\end{proposition}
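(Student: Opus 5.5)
The plan is to read the statement as the concatenation of Lemma \ref{lem:Liu-Zhuang-cone} with the special degeneration of $(Z,Y)$ to its orbifold cone, and then use that K-semistability is open in families. We do not need to check any test configurations on $Z$ directly.

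First I would apply Lemma \ref{lem:Liu-Zhuang-cone}. Its hypotheses coincide with ours: $Z$ Gorenstein Fano, $Y\in|\calO_Z(d)|$ Fano with $d\ge 2$, and $\calO(-K_Z)\cong\calO_Z(k)$ with $0<k\le d(n+1)$. So the K-polystability of $Y$ gives that $\big(C_Y,(1-\frac{k-d}{nd})Y\big)$ is K-polystable, and in particular K-semistable. Set $\alpha=1-\frac{k-d}{nd}$. Since $0<\frac{k-d}{d}\le n$, we have $\alpha\in[0,1)$, so the coefficient is an admissible boundary coefficient.

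Second, I would produce the degeneration. K-polystability of $Y$ implies $Y$ is klt by \cite{Odaka}. Inversion of adjunction \cite[Proposition 5.50]{KM} then makes $(Z,Y)$ plt; this uses that $Y$ is Cartier and $Z$ is Gorenstein, hence has canonical singularities near $Y$. Next, \cite[Lemma 2.12]{LiuZhuangSharpness} provides a $\bG_m$-equivariant family $(\calX,\cD)\to\bA^1$ whose fibers over $t\neq0$ are isomorphic to $(Z,Y)$ and whose central fiber is $(C_Y,Y)$. The construction is the deformation to the normal cone, i.e. the Rees algebra of the filtration of the section ring of $\calO_Z(d)$ by order of vanishing along $Y$. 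Scaling the boundary, $(\calX,\alpha\cD)\to\bA^1$ is then a $\bQ$-Gorenstein family of log Fano pairs: $-(K_Z+\alpha Y)\sim_{\bQ}\calO_Z(k-\alpha d)$ is ample, and on the cone side the corresponding class is the ample class realized by the hyperplane at infinity.

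Third, I would invoke openness of K-semistability in families \cite{blum2022openness}, as in the discussion preceding the statement. The central fiber $(C_Y,\alpha Y)$ is K-semistable, so K-semistability holds on a Zariski open neighborhood of $0\in\bA^1$, which contains some $t\neq0$. Since the family is isotrivial away from $0$, $(Z,\alpha Y)$ is K-semistable.

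The main obstacle is the second step: checking that the degeneration really is a $\bQ$-Gorenstein family of log Fano pairs with klt fibers for the coefficient $\alpha$, so that openness applies. The needed facts are:
\begin{itemize}
\item $K_{\calX/\bA^1}+\alpha\cD$ is $\bQ$-Cartier, which follows from $\calX$ being the relative Proj of a Rees algebra with $\cD$ Cartier;
\item the central pair $(C_Y,\alpha Y)$ is klt, which is automatic because it is K-semistable;
\item the general fiber $(Z,\alpha Y)$ is klt, which holds since $(Z,Y)$ is plt and $\alpha<1$.
\end{itemize}
I would rely on \cite{LiuZhuangSharpness} for these verifications rather than redo them.
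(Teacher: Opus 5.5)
Your proposal is correct and is essentially the paper's own argument: Lemma~\ref{lem:Liu-Zhuang-cone} gives K-polystability of $(C_Y,\alpha Y)$ with $\alpha=1-\frac{k-d}{nd}$, \cite{Odaka} together with \cite[Proposition 5.50]{KM} makes $(Z,Y)$ plt, \cite[Lemma 2.12]{LiuZhuangSharpness} supplies the isotrivial degeneration of $(Z,Y)$ to $(C_Y,Y)$, and openness of K-semistability \cite{blum2022openness} transfers K-semistability back to $(Z,\alpha Y)$. Your additional checks, namely $\alpha\in[0,1)$ and that the scaled degeneration is a $\bQ$-Gorenstein family of klt log Fano pairs, are details the paper leaves implicit. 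One aside is slightly off: being Gorenstein does not by itself give canonical singularities near $Y$. Inversion of adjunction only needs $Y$ to be a normal Cartier divisor with $K_Z+Y$ Cartier, and canonicity of $Z$ near $Y$ is then a consequence of plt-ness rather than an input.
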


K-stability is also defined for log Calabi Yau pairs so we can extend the previous result to the case that $Y\subset Z$ is not necessarily Fano. By Theorem \ref{odaka-CY}, we can generalize the previous proposition to this case as follows:

\begin{proposition}\label{prop:KYnefimpliesKstabofpair}
    Let $Z$ be a Gorenstein Fano variety of dimension $n$ and let $Y \subset Z$ be a hypersurface $Y \in |\calO_Z(d)|$ for some $d \ge 2$ with $K_Y$ nef, where $\calO_Z(1)$ is a very ample line bundle on $Z$ such that $\calO(-K_Z) \cong \calO_Z(k)$ for some $d\geq k$.  If $(Z,\frac{k}{d}Y)$ is klt (respectively, log canonical), then the pair $(Z,\frac{k}{d}Y)$ is K-stable (respectively, K-semistable).
\end{proposition}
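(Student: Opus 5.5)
The idea is to observe that the pair in question is log Calabi--Yau and then apply Theorem~\ref{odaka-CY} directly. The setup is the same as in Proposition~\ref{prop:KstabofYimpliesKstabofpair}, except that we no longer need the orbifold cone degeneration.

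First I would check that $(Z,\frac{k}{d}Y)$ really is a log Calabi--Yau pair. Since $\calO(-K_Z)\cong\calO_Z(k)$ and $Y\in|\calO_Z(d)|$, we have
\[
K_Z+\tfrac{k}{d}Y \sim_{\bQ} \calO_Z(-k)+\tfrac{k}{d}\,\calO_Z(d)\sim_{\bQ}0 .
\]
Hence $K_Z+\frac{k}{d}Y$ is $\bQ$-Cartier (indeed $\bQ$-linearly trivial). Because $Z$ is Gorenstein, hence normal, the pair is well defined in the sense of the paper's conventions. The hypothesis $d\ge k$ gives $0<\frac{k}{d}\le 1$, so the boundary has coefficient at most one and is effective. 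This is exactly what allows the pair to be klt or lc at all.

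The hypothesis that $K_Y$ is nef is consistent with this. By adjunction, $K_Y=(K_Z+Y)|_Y\sim\calO_Y(d-k)$, which is nef precisely when $d\ge k$. So this hypothesis only records that we are outside the Fano range of Proposition~\ref{prop:KstabofYimpliesKstabofpair}; it plays no further role in the argument.

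Second, I would invoke Theorem~\ref{odaka-CY}. It says that a log Calabi--Yau pair is K-stable (respectively K-semistable) exactly when it is klt (respectively log canonical). Here K-stability is understood via the general Futaki invariant for the polarized pair with respect to any ample polarization, for instance $\calO_Z(1)$; this is the convention the paper adopted just before the statement. Substituting the hypothesis that $(Z,\frac{k}{d}Y)$ is klt (respectively lc) then gives the conclusion immediately.

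The only step that needs real care is this polarization convention. For log Calabi--Yau pairs, K-stability is not defined through special test configurations of $-(K+\Delta)$, since that divisor is trivial. So I would state explicitly that we use Odaka's framework, which applies to an arbitrary ample $L$ with $K_Z+\frac{k}{d}Y\equiv 0$. In that framework the Donaldson--Futaki invariant reduces to the discrepancy term, and this is what makes the equivalence in Theorem~\ref{odaka-CY} hold for every polarization. Once this is fixed, the proof is a one-line application.
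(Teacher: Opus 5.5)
Your proposal is correct and matches the paper's argument: the paper gives no separate proof and obtains the statement directly from Theorem~\ref{odaka-CY} applied to the log Calabi--Yau pair $(Z,\frac{k}{d}Y)$, with the nefness of $K_Y$ amounting to $d\ge k$ by adjunction. One small slip is that Gorenstein does not imply normal (a nodal curve is Gorenstein), so normality of $Z$ should come from the standing convention that Fano varieties and pairs are normal, not from the Gorenstein hypothesis.
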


Next, we have a useful result known as \textit{interpolation} \cite[Proposition 2.13]{ascher2024wall}: 

\begin{proposition}\label{prop:k-interpolation}
Let $X$ be a $\bQ$-Fano variety. Let $D$ and $\Delta$ be effective $\bQ$-divisors on $X$ satisfying the following properties:
\begin{itemize}
    \item Both $D$ and $\Delta$ are rational multiples of $-K_X$.
    \item $-K_X-D$ is ample, and $-K_X-\Delta$ is nef.
    \item The log pairs $(X,D)$ and $(X,\Delta)$ are K-(poly/semi)stable and K-semistable, respectively.
\end{itemize}
Then we have
\begin{enumerate}
    \item If $D\neq 0$, then $(X,tD+(1-t)\Delta)$ is K-(poly/semi)stable for any $t\in (0,1]$.
    \item If $D=0$, then $(X,(1-t)\Delta)$ is K-semistable
    for any $t\in (0,1]$.
    \item If $\Delta\sim_{\bQ}-K_X$ and $(X,\Delta)$ is klt, then $(X,tD+(1-t)\Delta)$ is uniformly K-stable for any $t\in (0,1)$.
\end{enumerate}
\end{proposition}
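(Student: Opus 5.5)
The plan is to reduce everything to the valuative criterion for K-stability (Fujita--Li, together with Blum--Xu/Liu--Xu--Zhuang for the polystable and uniform versions). The point is that the $\beta$-invariant is affine in the boundary once all boundaries are proportional to $-K_X$.

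\textbf{Setup.} Write $D\sim_{\bQ} a(-K_X)$ and $\Delta\sim_{\bQ} b(-K_X)$ with $0\le a<1$ and $0\le b\le 1$. Put $B_t=tD+(1-t)\Delta$. Then
\[-K_X-B_t\sim_{\bQ} c_t(-K_X),\qquad c_t=t(1-a)+(1-t)(1-b)\ge 0,\]
and $c_t>0$ for $t>0$. For a prime divisor $E$ over $X$ set
\[\beta_{B}(E)=A_{X,B}(E)-c_B\,S_{-K_X}(E),\]
using the homogeneity $S_{cL}=cS_L$. Since $A_{X,B_t}(E)=A_X(E)-\mathrm{ord}_E(B_t)$ is affine in $t$, and so is $c_t$, we get
\[\beta_{B_t}(E)=t\,\beta_D(E)+(1-t)\,\beta_\Delta(E).\]
When $c_\Delta=0$, i.e. $b=1$, we have $\beta_\Delta(E)=A_{X,\Delta}(E)$. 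Nonnegativity of this for all $E$ is exactly log canonicity, which by Theorem~\ref{odaka-CY} is K-semistability in the log Calabi--Yau case. So in all cases K-semistability of a pair here is equivalent to $\beta\ge 0$ on all $E$.

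\textbf{Semistable and stable parts of (1) and (2).} By hypothesis $\beta_D\ge0$ and $\beta_\Delta\ge0$, so the identity above gives $\beta_{B_t}\ge 0$. Hence $(X,B_t)$ is K-semistable for all $t\in(0,1]$; this covers (2) and the semistable part of (1). For the stable case of (1), $\beta_D(E)>0$ for every $E$ (equivalently $\delta>1$ by Liu--Xu--Zhuang), so $\beta_{B_t}(E)>0$ for $t>0$. Finiteness of $\Aut$ is not needed here, since we work directly with $\delta$.

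\textbf{Polystable case of (1); the main obstacle.} Here I use the characterization: $(X,B)$ is K-polystable if and only if it is K-semistable and every special divisor $E$ with $\beta_B(E)=0$ is induced by a $\bG_m$-action on $(X,B)$. If $\beta_{B_t}(E)=0$ with $t>0$, nonnegativity forces $\beta_D(E)=\beta_\Delta(E)=0$. Polystability of $(X,D)$ then says the special degeneration of $X$ along $E$ is a product, given by a one-parameter subgroup $\lambda\subset\Aut(X,D)$. The difficulty is that $\lambda$ need not preserve $\Delta$.

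To get around this I would argue as follows. Degenerate $(X,B_t)$ along $\lambda$ to $(X,tD+(1-t)\Delta_0)$, where $\Delta_0=\lim\lambda(s)\Delta$. This degeneration has Futaki invariant $\beta_{B_t}(E)=0$. By uniqueness of the K-polystable degeneration (Li--Wang--Xu), it suffices to show that the K-polystable limit is isomorphic to $(X,B_t)$. This is the step I expect to be the real work. The fallback is to apply the openness and semicontinuity argument along the segment in $t$, as in \cite{ascher2024wall}, to conclude that $\lambda$ fixes $\Delta$.

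\textbf{Part (3).} Since $(X,\Delta)$ is klt, so is $(X,(1+e)\Delta)$ for some small $e>0$. Hence $\mathrm{ord}_E\Delta<A_X(E)/(1+e)$, which gives
\[\beta_\Delta(E)=A_{X,\Delta}(E)\ge \tfrac{e}{1+e}A_X(E)\ge \tfrac{e}{1+e}A_{X,B_t}(E).\]
Combined with $\beta_D\ge0$, this yields $\beta_{B_t}(E)\ge \varepsilon A_{X,B_t}(E)$ for $t<1$, where $\varepsilon=(1-t)e/(1+e)$. This is equivalent to $\delta(X,B_t)>1$, i.e. uniform K-stability.
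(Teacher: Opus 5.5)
Your arguments for the K-semistable and K-stable parts of (1), for (2), and for (3) are correct. With every boundary proportional to $-K_X$ one has $\beta_{B_t}=t\beta_D+(1-t)\beta_\Delta$, and your estimate in (3) gives $\delta(X,B_t)\ge(1-\varepsilon)^{-1}>1$. The paper itself gives no argument and simply quotes \cite[Proposition 2.13]{ascher2024wall}.

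The genuine gap is the polystable case of (1), and it cannot be closed. You need the one-parameter subgroup $\lambda\subset\Aut(X,D)$ attached to a divisor $E$ with $\beta_{B_t}(E)=0$ to preserve $\Delta$; equivalently, you need the limit $(X,tD+(1-t)\Delta_0)$ to be isomorphic to $(X,B_t)$. This fails in general, and with it the polystable assertion as stated. Take $X=\bP^1$, $D=\tfrac12([0]+[\infty])$ and $\Delta=\tfrac12([0]+[1])$. Every hypothesis holds: both pairs are K-polystable, and $-K_X-D$ and $-K_X-\Delta$ both have degree $1$. For $t\in(0,1)$ we have
\[
B_t=\tfrac12[0]+\tfrac t2[\infty]+\tfrac{1-t}{2}[1],
\]
and since $\beta_{(\bP^1,\sum c_ip_i)}(p_j)=\tfrac12\sum_i c_i-c_j$, this gives $\beta_{B_t}([0])=0$. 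The divisor $[0]$ induces the $\bG_m$-action fixing $0$ and $\infty$. This action preserves $D$ but moves $[1]$ into $[\infty]$. The resulting special test configuration therefore has central fiber $(\bP^1,\tfrac12[0]+\tfrac12[\infty])$ and vanishing Futaki invariant, yet it is not a product. Since $\Aut(\bP^1,B_t)$ is finite, $(X,B_t)$ is strictly K-semistable, not K-polystable. So neither your Li--Wang--Xu route nor a semicontinuity argument in $t$ can show that $\lambda$ fixes $\Delta$.

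Your argument does prove the polystable statement under an extra hypothesis making $\Delta$ invariant under $\Aut^0(X,D)$. For instance, $\mathrm{Supp}\,\Delta\subseteq\mathrm{Supp}\,D$ suffices, because a connected group preserving $D$ fixes each of its components. Then $\lambda$ preserves $\Delta$, so the test configuration of $(X,B_t)$ induced by $E$ is the product one given by $\lambda\subset\Aut(X,B_t)$, and you are done with no uniqueness argument. This is the only situation in which the paper uses the polystable case: in Proposition \ref{prop:polystabilityofpair} (and hence Theorem \ref{thm:index1}), $D=\epsilon Y$ and $\Delta$ is a multiple of the same divisor $Y$.
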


From interpolation, we can conclude the following. Roughly, this says if $Y \subset Z$ are as in the previous propositions, the K-polystability of $Z$ and $Y$ will imply the K-stability of the pair $(Z,\alpha Y)$ for large ranges of $\alpha$.  

\begin{lemma}\label{lem:semistabilityofpair}
    Let $Z$ be a K-polystable Gorenstein Fano variety of dimension $n$ and let $Y \subset Z$ be a hypersurface $Y \in |\calO_Z(d)|$ for some $d \ge 2$, where $\calO_Z(1)$ is a very ample line bundle on $Z$ such that $\calO(-K_Z) \cong \calO_Z(k)$ for some $k >0$.
    
    \begin{enumerate}
        \item Suppose $k \le d(n+1)$.  If $Y$ is Fano and K-polystable, then $(Z, cY)$ is K-semistable for all $c \in (0, 1 - \frac{k-d}{nd})$.
        \item If the log Calabi Yau pair 
        $(Z,\frac{k}{d}Y)$ is klt, then $(Z,cY)$ is K-stable for all $c \in (0, \frac{k}{d})$.
        \item If the log Calabi Yau pair 
        $(Z,\frac{k}{d}Y)$ is log canonical, then $(Z,cY)$ is K-semistable for all $c \in (0, \frac{k}{d})$. 
    \end{enumerate}
\end{lemma}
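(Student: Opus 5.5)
The plan is to apply interpolation (Proposition \ref{prop:k-interpolation}) on $X=Z$, taking $D$ and $\Delta$ to be suitable multiples of $Y$. Since $Y\in|\calO_Z(d)|$ and $-K_Z\sim kH$ with $H=\calO_Z(1)$, we have $cY\sim_{\bQ} -\frac{cd}{k}K_Z$. So every divisor below is a rational multiple of $-K_Z$, and $-K_Z-cY$ is ample precisely when $c<\frac{k}{d}$ and nef when $c\le \frac{k}{d}$. In each case I intend to interpolate between a known ``endpoint'' pair and the trivial boundary, using that $Z$ itself is K-polystable.

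For (1): by Proposition \ref{prop:KstabofYimpliesKstabofpair}, the K-polystability of $Y$ makes the pair $(Z,c_0Y)$ K-semistable, where $c_0=1-\frac{k-d}{nd}$. Since $Y$ is Fano we have $d<k$, so $c_0<1<\frac{k}{d}$ and $-K_Z-c_0Y$ is ample. Apply Proposition \ref{prop:k-interpolation}(2) with $D=0$ and $\Delta=c_0Y$. Then $-K_Z-\Delta$ is nef, $(Z,0)=Z$ is K-polystable and in particular K-semistable, and $(Z,\Delta)$ is K-semistable. This gives that $(Z,(1-t)c_0Y)$ is K-semistable for all $t\in(0,1]$. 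Equivalently, $(Z,cY)$ is K-semistable for $c\in(0,c_0)$, and in fact also at $c=0$ and $c=c_0$.

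For (3): take $\Delta=\frac{k}{d}Y\sim_{\bQ}-K_Z$. The pair $(Z,\Delta)$ is log canonical log Calabi--Yau, so by Theorem \ref{odaka-CY} it is K-semistable, and $-K_Z-\Delta\equiv 0$ is nef. Interpolation (2) with $D=0$ then gives K-semistability of $(Z,(1-t)\frac{k}{d}Y)$ for $t\in(0,1]$, which covers all $c\in(0,\frac{k}{d})$.

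For (2): assume now that $(Z,\frac{k}{d}Y)$ is klt. The natural tool is Proposition \ref{prop:k-interpolation}(3), which needs a nonzero $D$ with $(Z,D)$ K-polystable. This is the main obstacle: $D=0$ is excluded there, and I must not assume $Z$ is K-stable. I would try first to produce the nonzero $D$ from part (3). Pick a small $c_1>0$; by (3), or by (2) of interpolation, $(Z,c_1Y)$ is K-semistable. Then I would upgrade this to K-polystability by the following argument. Suppose $(Z,c_1Y)$ had a special degeneration with vanishing Futaki invariant. The Futaki invariant of a special test configuration is affine-linear in the coefficient $c$ along this family of boundaries. Since the invariant is nonnegative at $c=0$ and at $c=\frac{k}{d}$ (by K-semistability at both endpoints), vanishing at an interior point forces it to vanish identically, in particular at $c=\frac{k}{d}$. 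But the klt log Calabi--Yau pair is K-stable by Theorem \ref{odaka-CY}, so the test configuration must be trivial. The same linearity argument applied directly at any $c\in(0,\frac{k}{d})$ already yields K-stability: K-semistability holds by (3), and any special test configuration with $\Fut=0$ at $c$ has $\Fut=0$ at $\frac{k}{d}$ and is therefore trivial. Alternatively, once some $(Z,c_1Y)$ is K-polystable with $c_1Y\neq0$, Proposition \ref{prop:k-interpolation}(3) with $D=c_1Y$, $\Delta=\frac{k}{d}Y$ gives uniform K-stability on $(c_1,\frac{k}{d})$, and letting $c_1\to0$ covers the whole range. The delicate point I would check carefully is the linearity of $\Fut$ in $c$ for a test configuration that is special for one coefficient but only normal for others. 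I would handle this through the $\beta$-invariant of the associated divisorial valuation, $\beta_{(Z,cY)}(E)=A_Z(E)-c\,\mathrm{ord}_E(Y)-S_{cY}(E)$, which is visibly affine in $c$ after rescaling, so the Fujita--Li valuative criterion makes the argument rigorous.
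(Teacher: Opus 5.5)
Your treatment of (1) and (3) is exactly the paper's: interpolation with $D=0$ against the endpoint pair. In (1) that endpoint is $(Z,(1-\frac{k-d}{nd})Y)$, K-semistable by Proposition~\ref{prop:KstabofYimpliesKstabofpair}. In (3) it is the lc log Calabi--Yau pair $(Z,\frac{k}{d}Y)$, K-semistable by Theorem~\ref{odaka-CY}.

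For (2), the paper also just invokes interpolation, namely part (3) of Proposition~\ref{prop:k-interpolation} with $D=0$ and $\Delta=\frac{k}{d}Y$. The restriction $D\neq 0$ appears only in part (1) of that proposition. Part (3) only needs $(Z,0)$ to be K-semistable, so the ``main obstacle'' you identify is a misreading, and that route even yields uniform K-stability.

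Your replacement argument is nonetheless correct. With $s=\frac{cd}{k}\in(0,1)$ one has $S_{-K_Z-cY}(E)=(1-s)S_{-K_Z}(E)$, and hence
\[
\beta_{(Z,cY)}(E)=(1-s)\,\beta_Z(E)+s\,A_{(Z,\frac{k}{d}Y)}(E)>0
\]
for every prime divisor $E$ over $Z$. This holds because $\beta_Z(E)\ge 0$ and $(Z,\frac{k}{d}Y)$ is klt. Fujita's valuative criterion then gives K-stability of the klt log Fano pair $(Z,cY)$.

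This is essentially the proof of interpolation (3) specialized to $D=0$. It makes the argument self-contained and shows that only K-semistability of $Z$ is used. Once this identity is written down, the Futaki-invariant linearity discussion, the intermediate upgrade of $(Z,c_1Y)$ to K-polystability, and the alternative via $D=c_1Y$ with $c_1\to 0$ are all unnecessary.
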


\begin{proof}
    Assume first that $Y$ is Fano. Since $Y$ is K-polystable, we conclude $(Z, (1- \frac{k-d}{nd})Y)$ is K-semistable by Proposition \ref{prop:KstabofYimpliesKstabofpair}.  By assumption, $Z$ is K-polystable, so by interpolation applied to the pairs $(Z,0)$ and $(Z,(1- \frac{k-d}{nd})Y)$, $(Z, c Y)$ is K-semistable for all $c \in (0, 1 - \frac{k-d}{nd})$.  

     Now, assume that the log Calabi Yau pair $(Z,\frac{k}{d}Y)$ is klt (respectively, log canonical).  In this case, necessarily $k \le d$, so $K_Y$ is nef by adjunction.  By Proposition \ref{prop:KYnefimpliesKstabofpair}, $(Z, \frac{k}{d}Y)$ is K-stable (respectively, K-semistable).  Therefore, by interpolation, we conclude $(Z, c Y)$ is K-stable (respectively, K-semistable) for all $c \in (0,\frac{k}{d})$. 
\end{proof}

Finally, we prove that the previous lemma can be upgraded to conclude K-polystability of $(Z,cY)$ when $Y$ is K-polystable or, if $K_Y$ is nef, GIT-polystable.  First, we have a lemma well known to experts (see, e.g. \cite[Theorem 3.4]{OSS}).

\begin{lemma}\label{lem:KimpliesGIT}
    Let $Z$ be a projective variety and $\calL$ a very ample line bundle on $Z$.  Let $P = \bP(H^0(Z, \calL))$ be the space of sections of $\calL$ and $G$ a reductive group acting on $Z$ with linearized action on $P$.  Denote by $[P/G]$ the GIT quotient stack.  Let $Y \in |\calL|$ and suppose $Y$ is a K-polystable Fano variety.  Then, $[Y]$ is a GIT polystable point of $[P/G]$. 
\end{lemma}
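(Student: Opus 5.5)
We argue by contradiction via the Hilbert--Mumford criterion, following the standard route of \cite[Theorem 3.4]{OSS}. Suppose $[Y]$ is not GIT polystable. Then there is a one-parameter subgroup $\lambda\colon \bG_m \to G$ such that $\lim_{t\to 0}\lambda(t)\cdot[Y] = [Y_0]$ exists in $P$, with $[Y_0]$ not in the orbit $G\cdot[Y]$. We may take $Y_0$ to be the limit in the closure of the orbit, chosen either outside the semistable locus or in the closed orbit inside $\overline{G\cdot [Y]}$. The aim is to turn this one-parameter degeneration into a test configuration of $Y$ that would violate K-polystability.

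\textbf{Step 1: the test configuration.} The family $\{\lambda(t)\cdot Y\}_{t\ne0}$ together with its flat limit gives a closed subscheme $\calY \subset Z\times \bA^1$, flat over $\bA^1$. It is $\bG_m$-equivariant for the action that is $\lambda$ on $Z$ and the standard action on $\bA^1$. Its general fiber is isomorphic to $Y$ via $\lambda(t)$, and its central fiber is $Y_0\in|\calL|$. Polarizing by the restriction of $\calL$, or more naturally by a multiple of the relative anticanonical class, makes $\calY$ a test configuration of $(Y,-K_Y)$. Since every fiber lies in the same linear system $|\calL|$, adjunction identifies $-K_{\calY/\bA^1}$ with the restriction of $-K_{Z\times\bA^1}-\calY$, up to the $\bG_m$-linearization twist.

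\textbf{Step 2: compare Futaki invariant and GIT weight.} Use Paul--Tian / CM line bundle theory: on the parameter space $P$ (or on the open locus of Fano members) the CM line bundle $\lambda_{\CM}$ is $G$-linearized. For a family of members of a fixed linear system on a fixed $Z$, it is a positive multiple of $\calO_P(1)$, up to a character of $G$, which is trivial after passing to the semisimple part or noting that the central torus acts trivially on $P$. Consequently $\Fut(\calY)$ equals a positive multiple of the Hilbert--Mumford weight $\mu([Y],\lambda)$, with the sign convention fixed so that positive weight corresponds to positive Futaki. This is the key computation. I would do it by computing $(-K_{\overline{\calY}/\bP^1})^{n}$ on the $\infty$-trivial compactification, viewed as a divisor in $Z\times\bP^1$ twisted by $\lambda$. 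Its degree is governed by the $\lambda$-weight of the defining section, since $\overline{\calY}\in |pr_1^*\calL\otimes\calO_{\bP^1}(w)|$ with $w$ the weight.

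\textbf{Step 3: conclude.} K-semistability of $Y$ gives $\Fut\ge0$ for all special test configurations. The test configuration above need not be special, or even normal. However, by \cite{LX14} (and Li--Wang--Xu's treatment of K-polystable limits), for a K-polystable $Y$ any degeneration with $\Fut\le 0$ must be a product. Concretely, the CM weight is nonnegative for every test configuration after normalization, and zero only for products. Therefore $\mu([Y],\lambda)\ge 0$, with equality only when $Y_0\cong Y$ via a product configuration induced by a subgroup of $G$, which places $Y_0$ in $G\cdot[Y]$. This yields semistability, and it also shows that the orbit is closed in the semistable locus, i.e. polystability.

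\textbf{Main obstacle.} The hardest step is Step 2: proving that the CM line bundle restricted to the relevant locus of $P$ is a positive multiple of $\calO_P(1)$, and that non-normal or non-special central fibers do not break the inequality. For the first point I would compute via Knudsen--Mumford expansion on the universal family $\calU\subset Z\times P$, which is a divisor of bidegree $(\calL,\calO(1))$. For the second, I would invoke the fact that for K-polystable $Y$ the Donaldson--Futaki invariant of an arbitrary test configuration is still $\ge0$ with equality only for product ones (Li--Xu, Berman--Boucksom--Jonsson).
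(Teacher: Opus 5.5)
Your proposal is essentially the paper's argument, which is itself a sketch of \cite[Theorem 3.4]{OSS}: via the CM line bundle of \cite{PaulTianCMI}, the Futaki invariant of the test configuration induced by a one-parameter subgroup $\lambda$ is a positive multiple of $\mu([Y],\lambda)$, so K-polystability gives semistability and forces weight-zero degenerations to be products, hence a closed orbit. You are more explicit than the paper about normalizing the induced test configuration and about the sign of the CM line bundle, but, as in the paper, the chain ``the normalization is a product, so $Y_0\cong Y$, so $[Y_0]\in G\cdot[Y]$'' is asserted rather than proved (the first step needs $\mathcal{Y}$ itself to be normal, which you can get from the comparison in \cite{LX14} between $\Fut(\mathcal{Y})$ and the Futaki invariant of its normalization, together with $\mathcal{Y}$ being $S_2$ when $Z$ is Gorenstein; the second step needs the isomorphism $Y\cong Y_0$ to be induced by an element of $G$, as happens for linearly normal $Y\subset\bP^N$ with $G=PGL_{N+1}$), and your remark that the central torus of $G$ acts trivially on $P$ is false in general, so you really need the linearization of $\calO_P(1)$ compatible with the CM line bundle (automatic when $G$ has no nontrivial characters, as in the paper's applications).
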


\begin{proof}
    This holds for finite groups $G$ by Lemma \ref{lem:GITstability-finiteAut} so assume $G$ is infinite and let $\lambda$ be a 1-parameter subgroup in $G$.  This follows from \cite{OSS} so we only provide a sketch.  By \cite{PaulTianCMI}, the Hilbert-Mumford weight $\mu_{\lambda}(Y)$ is a positive scalar multiple of the Futaki invariant of the associated test configuration.  Therefore, because $Y$ is K-polystable, the Futaki invariant is nonnegative so the Hilbert-Mumford weight is non-negative.  This implies that $[Y]$ is GIT semistable.  However, it also has closed orbit: as it was K-polystable, the only test configurations with Futaki invariant $0$ are of product type and hence have central fiber isomorphic to $Y$, and therefore the limit of any one-parameter subgroup with trivial Hilbert-Mumford weight must be isomorphic to $Y$.
\end{proof}

Using the previous lemma, we obtain the following. 

\begin{proposition}\label{prop:polystabilityofpair}
    Let $Z$ be a K-polystable Gorenstein Fano variety of dimension $n$ and let $Y \subset Z$ be a hypersurface $Y \in |\calO_Z(d)|$ for some $d \ge 2$, where $\calO_Z(1)$ is a very ample line bundle on $Z$ such that $\calO(-K_Z) \cong \calO_Z(k)$ for some $k >0$.  
    
    \begin{enumerate}
        \item Suppose $k \le d(n+1)$.  If $Y$ is Fano and K-polystable, then $(Z, cY)$ is K-polystable for all $c \in (0, 1 - \frac{k-d}{nd})$.
        \item If the log Calabi Yau pair 
        $(Z,\frac{k}{d}Y)$ is klt, then $(Z,cY)$ is K-stable for all $c \in (0, \frac{k}{d})$.
        \item If the log Calabi Yau pair 
        $(Z,\frac{k}{d}Y)$ is log canonical  and $Y$ is GIT polystable with respect to the action of $G = \Aut(Z)$, then $(Z,cY)$ is K-polystable for all $c \in (0, \frac{k}{d})$.
    \end{enumerate}
\end{proposition}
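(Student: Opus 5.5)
Part (2) is exactly Lemma \ref{lem:semistabilityofpair}(2), so nothing new is needed there. The work is in upgrading K-semistability to K-polystability in parts (1) and (3). In both cases Lemma \ref{lem:semistabilityofpair} already gives that $(Z,cY)$ is K-semistable for every $c$ in the stated interval. So it remains to show that any special test configuration of $(Z,cY)$ with vanishing Futaki invariant is of product type.

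My plan is to use the structure of the K-moduli/K-polystable degeneration. Since $(Z,cY)$ is K-semistable, it admits a special degeneration, with zero Futaki invariant, to a unique K-polystable log Fano pair $(Z_0,cY_0)$. It suffices to show $(Z_0,cY_0)\cong (Z,cY)$.

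\textbf{Step 1: $Z_0\cong Z$.} The family of ambient varieties is a degeneration of $Z$, and I would argue that it can be taken isotrivial. Interpolation (Proposition \ref{prop:k-interpolation}) applied to $(Z,0)$ and $(Z,c'Y)$ with $c'$ an endpoint shows that the K-semistable pairs $(Z,tY)$ degenerate compatibly. By the wall-crossing/local VGIT structure of \cite{ascher2024wall} for small $c$, the polystable degeneration near $c=0$ has ambient space the K-polystable degeneration of $Z$, which is $Z$ itself since $Z$ is K-polystable. Having no walls in the open interval then propagates this to all $c$ in the interval. Concretely, the degeneration is then induced by a one-parameter subgroup $\lambda$ of $G=\Aut(Z)$ acting on $P=\bP(H^0(Z,\calO_Z(d)))$, and $Y_0=\lim_{t\to0}\lambda(t)\cdot Y$.

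\textbf{Step 2: reduce to GIT.} For such $\lambda$-induced test configurations, \cite{PaulTianCMI} as used in Lemma \ref{lem:KimpliesGIT} shows that the Futaki invariant of $(Z\times\bA^1, c\mathcal Y)$ is a combination of the Futaki invariant of $Z$ along $\lambda$ and a positive multiple of the Hilbert--Mumford weight $\mu_\lambda(Y)$. The first term is zero because $Z$ is K-polystable and the configuration is a product for $Z$. Hence Futaki invariant zero forces $\mu_\lambda(Y)=0$.

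\textbf{Step 3: conclude from GIT polystability of $Y$.} In case (1), Lemma \ref{lem:KimpliesGIT} shows that K-polystability of $Y$ gives GIT polystability of $[Y]$. In case (3), GIT polystability is assumed. Either way, $\mu_\lambda(Y)=0$ together with GIT polystability implies that the orbit is closed, so $Y_0$ lies in $G\cdot Y$. The degeneration is therefore a product test configuration, and $(Z,cY)$ is K-polystable.

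The main obstacle is Step 1, namely justifying that every zero-Futaki special degeneration of $(Z,cY)$ keeps $Z$ fixed and comes from $\Aut(Z)$. I would first try the argument from \cite[Theorem 3.4]{OSS} and the local VGIT description of K-moduli near $c=0$ in \cite{ascher2024wall}, combined with the uniqueness of K-polystable degenerations \cite{blum2019uniqueness}.
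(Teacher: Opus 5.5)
\textbf{There is a genuine gap.} It is in your Step 1, at intermediate values of $c$.

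Your Steps 2 and 3 are fine for degenerations induced by a one-parameter subgroup of $\Aut(Z)$. For $0<c\ll 1$, Step 1 is also a known theorem: when $Z$ is K-polystable, K-polystability of $(Z,cY)$ for small $c$ is governed by GIT of $Y$ under $\Aut(Z)$. The paper cites this as \cite[Theorem 10.3]{LZnonproportional}.

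What you do not justify is extending Step 1 to every $c$ in the open interval. The claim that there are ``no walls in the open interval'' for this pair does not follow from anything you have established. Knowing only that $(Z,cY)$ is K-semistable on the whole interval does not rule out the following: at some intermediate $c_0$ the pair is strictly K-semistable, and its K-polystable degeneration has a different ambient space. Ruling that out is exactly the K-polystability you are trying to prove, so the argument is circular as written. Your use of interpolation does not fill this gap either. Applying Proposition \ref{prop:k-interpolation} to $(Z,0)$ and $(Z,c'Y)$ falls under case (2), since $D=0$. That case yields only K-semistability and says nothing about how K-polystable degenerations vary with $c$.

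\textbf{The missing idea} is to run interpolation with a \emph{nonzero K-polystable} endpoint instead of $(Z,0)$.

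First, obtain GIT polystability of $Y$: from Lemma \ref{lem:KimpliesGIT} in case (1), and by hypothesis in case (3). Here $\Aut(Z)$ is reductive because $Z$ is K-polystable, so the GIT is meaningful. The small-coefficient result (your Steps 1--3 for $c\ll 1$, or directly \cite[Theorem 10.3]{LZnonproportional}) then gives that $(Z,\epsilon Y)$ is K-polystable for $0<\epsilon\ll 1$.

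Second, set $c_{\max}=1-\frac{k-d}{nd}$ in case (1) and $c_{\max}=\frac{k}{d}$ in case (3). The pair $(Z,c_{\max}Y)$ is K-semistable by Proposition \ref{prop:KstabofYimpliesKstabofpair}, respectively Proposition \ref{prop:KYnefimpliesKstabofpair}. Moreover $-K_Z-c_{\max}Y$ is ample, respectively numerically trivial.

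Third, apply Proposition \ref{prop:k-interpolation}(1) with $D=\epsilon Y\neq 0$ and $\Delta=c_{\max}Y$. This gives K-polystability of $(Z,cY)$ for all $c\in[\epsilon,c_{\max})$. Letting $\epsilon\to 0$ covers the whole interval.

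This is the paper's proof. It never needs to classify zero-Futaki special degenerations at intermediate $c$. The reason it works is the mechanism your ``no walls'' claim implicitly needs. Since $Y\sim_{\bQ}\frac{d}{k}(-K_Z)$, the $\beta$-invariant of a fixed divisorial valuation is affine in $c$. So if it vanishes at an interior $c_0$, and is nonnegative up to $c_{\max}$ by K-semistability, it also vanishes at $c=\epsilon$. There, K-polystability of $(Z,\epsilon Y)$ forces the valuation to be of product type.
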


\begin{proof}
    Part (2) is \ref{lem:semistabilityofpair}. For parts (1) and (3), note by \cite{ABHLX}, the K-polystability of $Z$ implies $G = \Aut(Z)$ is reductive, so the GIT quotient is defined.  If $Y$ is Fano, by Lemma \ref{lem:KimpliesGIT}, then the K-polystability of $Y$ implies that $Y$ is GIT polystable, so in either case we may assume $Y$ is GIT polystable. 
    
    By Lemma \ref{lem:semistabilityofpair}, we have $(Z,cY)$ is K-semistable for $c$ as in the statement.  Because $(Z,\epsilon Y)$ is K-semistable for any $\epsilon \ll 1$, by \cite[Theorem 10.3]{LZnonproportional}, the GIT polystability of $Y$ implies that $(Z, \epsilon Y)$ is K-polystable.  Therefore, we can in fact apply interpolation to the pairs $(Z, \epsilon Y)$ and $(Z,(1- \frac{k-d}{nd})Y)$ to conclude $(Z, c Y)$ is K-polystable for all $c \in (0, 1 - \frac{k-d}{nd})$ (if $Y$ is Fano) or $c \in (0, \frac{k}{d})$ (if $K_Y$ is nef).
\end{proof}

The results in this section will be used repeatedly in the following way: 

\begin{theorem}\label{thm:kstab-of-cover}
    Let $X$ be a Fano variety that is a degree $m\geq 2$ cyclic cover of an $n$-dimensional Gorenstein Fano variety $Z$ branched over a hypersurface $Y \in |\calO_Z(d)|$ such that $m \mid d$ and $d \ge 2$, where $\calO_Z(1)$ is a very ample line bundle on $Z$ such that $\calO(-K_Z) \cong \calO_Z(k)$ for some $k > 0$.

    Assume that $Z$ is K-polystable. In addition,
    
    \begin{enumerate}
        \item If $Y$ is Fano, assume that $Y$ is K-polystable and $k\leq  n\frac{d}{m}+d-1$; or
        \item assume that $(Z,\frac{k}{d}Y)$ is log canonical and $Y$ is GIT polystable with respect to the action of $G = \Aut(Z)$.
    \end{enumerate}

    Then, $X$ is K-polystable.  If $\Aut(X)$ is finite, then $X$ is K-stable.
\end{theorem}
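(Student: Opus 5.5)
The strategy is to pass from $X$ to the pair on the base via Theorem \ref{thm:finitecovers}, and then apply Proposition \ref{prop:polystabilityofpair} to that pair.

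\textbf{Step 1: the quotient pair.} Let $G=\mu_m$ act on $X$ by deck transformations, so that $X/G=Z$. The ramification divisor is $(m-1)$ times the reduced preimage of $Y$. By Riemann--Hurwitz,
\[
K_X=\pi^*\Big(K_Z+\big(1-\tfrac1m\big)Y\Big).
\]
Theorem \ref{thm:finitecovers} then says that $X$ is K-polystable if and only if $(Z,(1-\frac1m)Y)$ is K-polystable. Note that $X$ is Fano exactly when $-(K_Z+(1-\frac1m)Y)$ is ample, i.e. when $k>(1-\frac1m)d$. The hypothesis $m\mid d$ is what makes the cyclic cover exist: $\calO_Z(d/m)$ is an $m$-th root of $\calO_Z(Y)$. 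So the whole problem reduces to showing that $c=1-\frac1m$ lies in the ranges allowed by Proposition \ref{prop:polystabilityofpair}.

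\textbf{Case (1): $Y$ Fano and K-polystable.} Proposition \ref{prop:polystabilityofpair}(1) gives K-polystability of $(Z,cY)$ for $c\in(0,1-\frac{k-d}{nd})$, provided $k\le d(n+1)$. We need
\[
1-\tfrac1m<1-\tfrac{k-d}{nd}\iff k-d<\tfrac{nd}{m}.
\]
Since everything is an integer and $m\mid d$, this is the same as $k\le n\frac dm+d-1$, which is exactly the hypothesis. The same hypothesis gives $k\le d(n+1)$, since $\frac{nd}{m}+d-1<nd+d$.

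\textbf{Case (2): $(Z,\frac kdY)$ lc and $Y$ GIT polystable.} Here I would split according to the sign of $K_Y$.

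If $k\le d$, then $(Z,\frac kdY)$ is a log Calabi--Yau pair and $K_Y$ is nef. Proposition \ref{prop:polystabilityofpair}(3) gives K-polystability for $c\in(0,\frac kd)$. Since $X$ is Fano, $1-\frac1m<\frac kd$, so $c=1-\frac1m$ is in range.

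If $k>d$, the pair $(Z,\frac kdY)$ does not make sense as stated, because $\frac kd>1$ exceeds the lc bound. Presumably the statement intends $k\le d$ in case (2), or else reads ``lc'' as forcing it: a coefficient $>1$ on a prime divisor is never lc, so lc of $(Z,\frac kdY)$ already implies $k\le d$ when $Y\neq 0$. So case (2) reduces entirely to the log Calabi--Yau situation.

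\textbf{Stability.} Finally, if $\Aut(X)$ is finite, then K-polystability upgrades to K-stability by the remark after the definitions, citing \cite{blum2019uniqueness}.

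\textbf{Main obstacle.} I expect the delicate point to be the endpoint and integrality bookkeeping in case (1): checking that the strict inequality needed for the open interval matches the stated bound on $k$. A secondary check is that Proposition \ref{prop:polystabilityofpair}(1) genuinely gives polystability on the whole open interval, including near $c=1-\frac1m$ rather than only near $0$. This needs interpolation between $(Z,\epsilon Y)$ and the K-semistable endpoint, with the polystable pair taken as $D$ in Proposition \ref{prop:k-interpolation}.
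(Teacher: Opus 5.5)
Your proposal is correct and follows the paper's proof essentially verbatim. Like the paper, you reduce via Theorem \ref{thm:finitecovers} to the K-polystability of $(Z,(1-\frac1m)Y)$ and check that $c=1-\frac1m$ lies in the interval given by Proposition \ref{prop:polystabilityofpair}: in case (1) you use the integrality of $\frac{nd}{m}$ to convert $k-d<\frac{nd}{m}$ into the stated bound on $k$, and in case (2) you use the Fano condition $k>\frac{m-1}{m}d$. Your extra remark that log canonicity of $(Z,\frac kd Y)$ forces $k\le d$ is the same observation the paper makes in the proof of Lemma \ref{lem:semistabilityofpair}.
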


\begin{proof}
    By \cite{liu2022equivariant} (Theorem \ref{thm:finitecovers}), $X$ is K-polystable if and only if the pair $(Z, (1 - \frac{1}{m})Y)$ is K-polystable.  By Proposition \ref{prop:polystabilityofpair}, we know $(Z, cY)$ is K-polystable for any $c \in (0, 1 - \frac{k-d}{nd})$ if $Y$ is Fano and for any $c \in (0, \frac{k}{d})$ if $(Z, \frac{k}{d}Y)$ is log canonical.

    Suppose we are in case (1) and $k \leq n\frac{d}{m}+d-1$. Since $m$ divides $d$ the right-hand side is a positive integer. Then,
\[
k<n\frac{d}{m}+d \iff k-d< n\frac{d}{m} \iff \frac{k-d}{nd} < \frac{1}{m} \iff 1-\frac{k-d}{nd} > 1-\frac{1}{m}.
\] 
Notice that $k<n\frac{d}{m}+d <d(n+1)$ so Proposition \ref{prop:polystabilityofpair} applies and therefore $(Z, (1 - \frac{1}{m})Y)$ is K-polystable and the result follows.

    Now suppose we are in case (2).  Note that the hypothesis that $X$ is Fano implies $k > \frac{m-1}{m} d $ and therefore,
    \[ 1 - \frac{1}{m} = \frac{m-1}{m} < \frac{k}{d}.\]  Hence, $(Z, (1 - \frac{1}{m})Y)$ is K-polystable and the result follows.

    We therefore conclude $X$ is K-polystable, and hence if $\Aut(X)$ is finite, $X$ is K-stable. 
\end{proof}

\section{K-polystable branched covers}\label{sec:covers}

In this section, we prove the K-polystability of several Fano varieties arising as branched covers using the results in the previous section.  Most of these are known to experts or have appeared earlier in the literature.

\begin{corollary}
    Suppose $Y \subset \bP^n$ is a smooth hypersurface of degree $d \ge 2$.  Then, 
        \begin{enumerate}
        \item if $d \le n$ and $Y$ is K-polystable, then $(\bP^n, cY)$ is K-polystable for all $c \in \Big (0, 1 - \frac{n+1-d}{nd}\Big)$, and
        \item if $d \ge n+1$, $(\bP^n,cY)$ is K-polystable for all $c \in \big(0, \frac{n+1}{d}\big)$.
    \end{enumerate}
\end{corollary}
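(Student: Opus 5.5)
This is a direct application of Proposition \ref{prop:polystabilityofpair} with $Z = \bP^n$, $\calO_Z(1) = \calO_{\bP^n}(1)$ (very ample) and $k = n+1$, since $\calO(-K_{\bP^n}) \cong \calO_{\bP^n}(n+1)$. The space $\bP^n$ is a smooth, hence Gorenstein, Fano variety. It is K-polystable, because it admits the Fubini--Study K\"ahler--Einstein metric; algebraically, one can also check $\delta(\bP^n)=1$ together with reductivity of $\mathrm{PGL}_{n+1}$. So the standing hypotheses of the proposition hold, and it remains to check the case-specific conditions.

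\textbf{Case (1), $d \le n$.} By adjunction $-K_Y = \calO_Y(n+1-d)$ with $n+1-d \ge 1$, so $Y$ is Fano. The inequality $k = n+1 \le d(n+1)$ holds trivially since $d \ge 2$. Since $Y$ is assumed K-polystable, Proposition \ref{prop:polystabilityofpair}(1) gives K-polystability of $(\bP^n, cY)$ for every $c \in (0, 1 - \frac{k-d}{nd}) = (0, 1 - \frac{n+1-d}{nd})$. This is exactly the stated range, so this case is pure bookkeeping.

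\textbf{Case (2), $d \ge n+1$.} Here $K_Y = \calO_Y(d-n-1)$ is nef and $\frac{k}{d} = \frac{n+1}{d} \le 1$. I would invoke Proposition \ref{prop:polystabilityofpair}(3), which needs two inputs.

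First, the log Calabi--Yau pair $(\bP^n, \frac{n+1}{d}Y)$ must be log canonical. Since $Y$ is a smooth prime divisor, $(\bP^n, Y)$ is log smooth, hence lc. Because the coefficient satisfies $\frac{n+1}{d} \le 1$, the pair $(\bP^n, \frac{n+1}{d}Y)$ is also lc. It is even klt when $d > n+1$, in which case part (2) of the proposition already gives K-stability.

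Second, $Y$ must be GIT polystable for the action of $\Aut(\bP^n) = \mathrm{PGL}_{n+1}$, linearized through $\SL_{n+1}$ on $\bP(H^0(\bP^n,\calO(d)))$. This is Lemma \ref{lem:GITstability-hypersurfaces}: since $d \ge n+1 \ge 3$ once $n \ge 2$, a smooth hypersurface is GIT stable, hence polystable. Proposition \ref{prop:polystabilityofpair}(3) then yields K-polystability of $(\bP^n, cY)$ for all $c \in (0, \frac{n+1}{d})$.

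The only delicate point is making sure the group in the GIT statement matches the one used in Proposition \ref{prop:polystabilityofpair}, namely $\Aut(\bP^n)$ acting via $\SL_{n+1}$, and handling tiny edge cases. The case $n=1$ with $d=2$ falls under Lemma \ref{lem:GITstability-hypersurfaces} with $d=2$, which still gives polystability, so there is no real obstacle.
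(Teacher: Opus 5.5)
Your proposal is correct and is exactly the paper's argument: apply Proposition \ref{prop:polystabilityofpair} to $Z=\bP^n$ with $k=n+1$. When $d\le n$ you use part (1); when $d\ge n+1$ you use part (3), with GIT polystability of the smooth $Y$ supplied by Lemma \ref{lem:GITstability-hypersurfaces}. (One minor quibble: your parenthetical ``algebraic'' justification, that $\delta(\bP^n)=1$ together with reductivity of $\mathrm{PGL}_{n+1}$ gives K-polystability, is not a valid implication in general, but the Fubini--Study metric, or the toric barycenter criterion, suffices.)
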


\begin{proof}
    First observe that $\bP^n$ is K-polystable. Because $Y$ is smooth, it is GIT polystable by Lemma \ref{lem:GITstability-hypersurfaces}.  The hypotheses of Proposition \ref{prop:polystabilityofpair} are hence all satisfied, and in the notation of Proposition \ref{prop:polystabilityofpair} with $Z = \bP^n$, we have $k = n+1$, so the result follows. 
\end{proof}

\begin{corollary}
    Suppose $Z$ is a smooth K-stable Fano variety and let $Y \subset Z$ be a smooth divisor $Y \in |\calO_Z(d)|$ for some $d \ge 2$, where $\calO_Z(1)$ is a very ample line bundle on $Z$ such that $\calO(-K_Z) \cong \calO_Z(k)$ for some $0<k \le d(n+1)$. Then, 
    \begin{enumerate}
        \item if $d < k$ and $Y$ is a K-polystable Fano variety, $(Z, cY)$ is K-stable for all $c \in (0, 1 - \frac{k-d}{nd})$, and
        \item if $d \ge k$, $(Z,cY)$ is K-stable for all $c \in (0, \frac{k}{d})$.
    \end{enumerate}
\end{corollary}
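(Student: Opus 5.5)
The plan is to reduce both parts to Proposition \ref{prop:polystabilityofpair}, and then upgrade K-polystability to K-stability by showing that $\Aut(Z,Y)$ is finite. Since $Z$ is K-stable, it is in particular K-polystable, and it is Gorenstein because it is smooth. So the standing hypotheses of Proposition \ref{prop:polystabilityofpair} hold with $\calO_Z(1)$, $k$ and $d$ as in the statement.

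For part (1), $Y$ is Fano and K-polystable and $k \le d(n+1)$, so Proposition \ref{prop:polystabilityofpair}(1) gives that $(Z,cY)$ is K-polystable for all $c \in (0, 1-\frac{k-d}{nd})$.

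For part (2), $d \ge k$. Since $Z$ and $Y$ are smooth and $Y$ is a smooth prime divisor, the pair $(Z, \frac{k}{d}Y)$ has coefficient $\frac{k}{d} \le 1$ on a smooth divisor. It is therefore plt, hence log canonical, and it is klt when $d > k$. If $d>k$, Proposition \ref{prop:polystabilityofpair}(2) directly gives K-stability for $c \in (0,\frac{k}{d})$. If $d = k$, we instead need GIT polystability of $[Y]$ under $G=\Aut(Z)$ in order to apply part (3). Since $Z$ is K-stable, $\Aut(Z)$ is finite (by \cite{blum2019uniqueness}, as used above), so Lemma \ref{lem:GITstability-finiteAut} shows $[Y]$ is GIT stable, and part (3) gives K-polystability. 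In fact, this finiteness also handles the other cases uniformly.

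The last step upgrades K-polystability to K-stability in every case. The group $\Aut(Z,cY)$ is a closed subgroup of $\Aut(Z)$, and $\Aut(Z)$ is finite because $Z$ is K-stable. Hence $\Aut(Z,cY)$ is finite, and the remark that a K-polystable log Fano pair with finite automorphism group is K-stable finishes the argument. For this we need $(Z,cY)$ to be log Fano, meaning klt with $-(K_Z+cY)$ ample. Ampleness holds because $-(K_Z+cY) \sim \calO_Z(k-cd)$ and $cd<k$ in both ranges; in part (1) this uses $1-\frac{k-d}{nd} \le \frac{k}{d}$ when $d<k$. The klt condition holds because $c<1$ and $Y$ is a smooth divisor.

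The main obstacle I expect is the boundary case $d=k$ in part (2), where the Calabi--Yau pair is only log canonical. It is resolved by finiteness of $\Aut(Z)$ via Lemma \ref{lem:GITstability-finiteAut}. A secondary point is checking that the finiteness of automorphisms of K-stable Fano varieties is indeed available from the cited results.
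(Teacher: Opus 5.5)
Your proposal is correct and follows essentially the paper's route: finiteness of $\Aut(Z)$ (from K-stability) makes $[Y]$ GIT stable by Lemma \ref{lem:GITstability-finiteAut}, smoothness of $Y$ and $Z$ gives the log canonicity needed, and Proposition \ref{prop:polystabilityofpair} then applies. Your final step, upgrading K-polystability to K-stability via finiteness of $\Aut(Z,cY)\subset\Aut(Z)$ together with the check that $(Z,cY)$ is log Fano, is left implicit in the paper's one-line ``the result follows''.
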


\begin{proof}
    Because $Z$ is K-stable, $\Aut(Z)$ is finite, and hence any $Y \in |\calO_Z(d)|$ is GIT stable by Lemma \ref{lem:GITstability-finiteAut}.  Because $Y$ and $Z$ are smooth, $(Z,Y)$ is log canonical.  Therefore, the hypotheses of Proposition \ref{prop:polystabilityofpair} are satisfied and the result follows.
\end{proof}

We prove several corollaries on the stability of hypersurfaces and finite covers, and in particular provide algebraic proofs (and strengthenings) of the results of Arezzo, Ghigi, and Pirola in \cite{AGP06}.  First, we reproduce the following result of Liu and Zhu for the convenience of the reader (\cite[Theorem 5.1]{liu2022equivariant}):

\begin{corollary}\label{cor:hypersurface-cover}
    Let $Y = (f(x_0, \dots, x_n) = 0) \subset \mathbb{P}^{n}$ be a K-polystable Fano hypersurface or a smooth Calabi-Yau hypersurface of degree $d \geq 2$.  If $X = (f(x_0, \dots, x_n) + x_{n+1}^d = 0) \subset \mathbb{P}^{n+1}$ is the degree $d$ cyclic cover of $Y$, then $X$ is K-polystable. If $d \geq 3$ and $X$ is smooth, then $X$ is K-stable.
\end{corollary}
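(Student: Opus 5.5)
Apply Theorem \ref{thm:kstab-of-cover} with $Z = \bP^n$, $\calO_Z(1)=\calO_{\bP^n}(1)$, $k = n+1$, and $m = d$. The hypersurface $X = (f + x_{n+1}^d = 0) \subset \bP^{n+1}$ is the degree $d$ cyclic cover of $\bP^n$ branched along $Y = (f=0) \in |\calO_{\bP^n}(d)|$. The projection $[x_0:\dots:x_{n+1}] \mapsto [x_0:\dots:x_n]$ is defined on $X$, because $x_0=\dots=x_n=0$ forces $x_{n+1}=0$. It is the quotient by $\mu_d$ acting on $x_{n+1}$, and since $m = d$ the divisibility $m \mid d$ holds. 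Also $\bP^n$ is K-polystable and Gorenstein Fano, and $X$ is Fano because $d \le n+1$, which holds in both the Fano and the Calabi--Yau case for $Y$.

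We split into two cases according to the type of $Y$.

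\emph{Case $Y$ Fano and K-polystable.} We use hypothesis (1) of Theorem \ref{thm:kstab-of-cover}, which requires $k \le n\frac{d}{m} + d - 1$. With $m=d$ this reads $n+1 \le n + d - 1$, i.e.\ $d \ge 2$, which is assumed.

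\emph{Case $Y$ a smooth Calabi--Yau hypersurface, $d = n+1$.} We use hypothesis (2). The pair $(\bP^n, \frac{k}{d}Y) = (\bP^n, Y)$ is log canonical since $Y$ is a smooth divisor in a smooth variety. $Y$ is GIT polystable for $\Aut(\bP^n)$ by Lemma \ref{lem:GITstability-hypersurfaces}. For $d=2$, $Y$ would be a quadric, which is Fano when $n \ge 2$, so that subcase is already covered; the only degenerate situation is $n=1$, $d=2$, where the statement is elementary.

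In either case Theorem \ref{thm:kstab-of-cover} gives that $X$ is K-polystable. If moreover $d \ge 3$ and $X$ is smooth, then $X$ is a smooth hypersurface of degree at least $3$ and, since $X$ is Fano, not a K3 surface. By \cite{MM63}, $\Aut(X)$ is finite when $\dim X = n \ge 2$, and the last clause of Theorem \ref{thm:kstab-of-cover} upgrades K-polystability to K-stability. When $n=1$, a smooth Fano curve is $\bP^1$; this forces $d\le 2$, so the case does not arise for $d \ge 3$.

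The main obstacle is making sure Theorem \ref{thm:kstab-of-cover} genuinely applies in both cases. First, the projection must identify $X$ as the $\mu_d$-quotient onto $\bP^n$ with branch divisor exactly $Y$ and ramification relation $K_X = \pi^*(K_{\bP^n} + (1-\frac1d)Y)$. Second, the small-dimensional boundary cases must be checked, e.g.\ that $d \le n+1$ is automatic. Everything else is the numerical inequality $k \le n\frac{d}{m} + d - 1$, which is immediate.
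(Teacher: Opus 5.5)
Your proposal is correct and follows the paper's proof. You realize $X$ as the $\mu_d$-cyclic cover of $\bP^n$ branched along $Y$ and apply Theorem \ref{thm:kstab-of-cover}: case (1) when $Y$ is K-polystable Fano, and case (2) when $Y$ is smooth Calabi--Yau, with GIT polystability from Lemma \ref{lem:GITstability-hypersurfaces}. You then upgrade to K-stability using finiteness of $\Aut(X)$ from \cite{MM63}. Along the way you make explicit the numerical check $n+1 \le n+d-1$ and the low-dimensional edge cases, which the paper leaves implicit.
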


\begin{proof}
    Consider the map $\mathbb{P}^{n+1} \to \mathbb{P}(1^{n},d) $ given by $[x_0: \dots:x_n:x_{n+1}] \mapsto [x_0: \dots :x_n:x_{n+1}^d]$.  The image of $X$ is $\mathbb{P}^n$ and the induced map $X \to \mathbb{P}^n$ is a $d:1$ cover branched over $Y$.  Because $Y$ is smooth in the Calabi Yau case, it is GIT polystable by Lemma \ref{lem:GITstability-hypersurfaces}, and therefore Theorem \ref{thm:kstab-of-cover} implies that $X$ is K-polystable.  If $d \ge 3$, by \cite{MM63} we conclude $\mathrm{Aut}(X)$ is finite because $X$ is a smooth Fano hypersurface.  This implies $X$ is K-stable. 
\end{proof}

Our next corollary is a generalization of the stability of the Fermat hypersurface and \cite[Case (a), pg. 3]{AGP06}:

\begin{corollary}\label{cor:fermat-generalization}
    Let $X = (f(x_0, \dots, x_{n-k+1}) + x_{n-k+2}^d + \dots + x_{n+1}^d = 0) \subset \mathbb{P}^{n+1}$ be a Fano hypersurface be such that $f(x_0, \dots, x_{n-k+1})$ defines a smooth hypersurface of degree $d$ in $\mathbb{P}^{n-k+1}$ and $k \ge n+2 -d$.  Then, $X$ is K-polystable.  If $d  \geq 3$, then $X$ is K-stable. 
\end{corollary}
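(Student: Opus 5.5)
The plan is to induct on the number of added diagonal powers, applying Corollary~\ref{cor:hypersurface-cover} once for each new variable. For $0 \le j \le k$ set
\[ X_j = \big(f(x_0,\dots,x_{n-k+1}) + x_{n-k+2}^d + \dots + x_{n-k+1+j}^d = 0\big) \subset \bP^{n-k+1+j}, \]
so that $X_0 = (f=0)\subset \bP^{n-k+1}$ and $X_k = X$. By construction $X_{j+1}$ is exactly the degree $d$ cyclic cover of $\bP^{n-k+1+j}$ branched along $X_j$, as in Corollary~\ref{cor:hypersurface-cover}.

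First I would check that every $X_j$ is smooth, via the Jacobian criterion. A singular point of $X_j$ must have $x_{n-k+2}=\dots=x_{n-k+1+j}=0$, since $d\ge 2$ and the partial derivatives in these variables are $d\,x_i^{d-1}$. It must then be a singular point of $(f=0)\subset\bP^{n-k+1}$, and there are none by hypothesis. Next I would locate the Calabi--Yau member of the chain. Since $X_j$ has degree $d$ in $\bP^{n-k+1+j}$, it is Fano iff $d < n-k+2+j$ and Calabi--Yau iff $j = j_0 := d-(n-k+2)$. The hypothesis $k\ge n+2-d$ gives $j_0\ge 0$. The Fano hypothesis $d\le n+1$ on $X$ gives $j_0\le k-1$. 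Hence $X_{j_0}$ is a smooth Calabi--Yau hypersurface of degree $d$, and for $j_0<j\le k$ every $X_j$ is Fano. The members $X_j$ with $j<j_0$ are of general type, and we simply ignore them.

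The induction then runs as follows. Corollary~\ref{cor:hypersurface-cover} applied to the smooth Calabi--Yau $X_{j_0}$ shows that $X_{j_0+1}$ is K-polystable. For $j_0<j<k$, $X_j$ is a K-polystable Fano hypersurface by the inductive hypothesis, so the same corollary gives that $X_{j+1}$ is K-polystable. After reaching $j=k$ we conclude that $X$ is K-polystable. If $d\ge 3$, then $X$ is smooth by the first step, and the last clause of Corollary~\ref{cor:hypersurface-cover} (equivalently, finiteness of $\Aut(X)$ by \cite{MM63}) upgrades this to K-stability. For $d=2$, $X$ is a smooth quadric, which is only K-polystable, consistent with the statement.

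The main obstacle is the bookkeeping at the start of the chain rather than any new stability input. The essential point is that the Calabi--Yau level $X_{j_0}$ exists within the range $0\le j_0\le k-1$, and this is exactly where the hypothesis $k\ge n+2-d$ is used. I would also check the degenerate low-dimensional cases. When $X_{j_0}$ has dimension $0$ (a set of $d=2$ distinct points in $\bP^1$, with $\bP^1$ as base), I would verify that Corollary~\ref{cor:hypersurface-cover}, or directly Theorem~\ref{thm:kstab-of-cover}, still applies to the pair $(\bP^1,\tfrac12 Y)$. If it does not, I would handle that case by hand, using that a smooth conic is K-polystable.
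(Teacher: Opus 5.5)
Your proposal is correct and follows the paper's proof. The paper likewise locates the smooth Calabi--Yau level $(f+x_{n-k+2}^d+\dots+x_{n-m+1}^d=0)\subset\bP^{n+1-m}$ with $d=n+2-m$, which is your $X_{j_0}\subset\bP^{d-1}$. It then applies Theorem~\ref{thm:kstab-of-cover} to get the first Fano cover, inducts upward through the degree $d$ cyclic covers, and uses finiteness of $\Aut(X)$ from \cite{MM63} to get K-stability when $d\ge 3$. Your explicit checks that each $X_j$ is smooth, that $0\le j_0\le k-1$, and of the $d=2$ base case (two points in $\bP^1$) simply spell out details the paper leaves implicit.
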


\begin{proof}
    It suffices to prove the K-polystability because, if $d > 2$, then $\mathrm{Aut} (X)$ is finite by the same argument as in the previous proof, so the stability follows.  

    Because $X$ is Fano, $d < n+2$.  Let $m$ be such that $d = n+2-m$.  Because $k\ge n+2 - d$, we have $k \ge m$, so we may write $X = (f(x_0, \dots, x_{n-k+1}) + x_{n-k+2}^d + \dots + x_{n-m+1}^d + x_{n-m+2}^d  + \dots + x_{n+1}^d = 0)$.  By induction and Theorem \ref{thm:kstab-of-cover}, it suffices to show that the Fano hypersurface $Z = (f(x_0, \dots, x_{n-k+1}) + x_{n-k+2}^d + \dots + x_{n-m+1}^d + x_{n-m+2}^d=0 ) \subset \mathbb{P}^{n+2-m}$ is K-polystable.  This follows directly from Theorem \ref{thm:kstab-of-cover} because the Calabi-Yau hypersurface $Y  = (f(x_0, \dots, x_{n-k+1}) + x_{n-k+2}^d + \dots + x_{n-m+1}^d =0 ) \subset \mathbb{P}^{n+1-m}$ is smooth.  
\end{proof}

We recover the well-known result on stability of a general hypersurface, originally shown in \cite{tian2012canonical}:

\begin{corollary}\label{cor:general-hypersurface}
    A general degree $d$ Fano hypersurface $X_d \subset \bP^{n+1}$ is K-polystable.  If $d \geq 3$, then $X$ is K-stable.
\end{corollary}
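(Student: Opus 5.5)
By openness of K-semistability and K-stability in families \cite{BLX19}, it suffices to exhibit, for each $d \le n+1$, one K-polystable degree $d$ hypersurface in $\bP^{n+1}$. For $d \ge 3$ we want that example to be smooth, hence with finite automorphism group by \cite{MM63} and therefore K-stable. Corollary~\ref{cor:fermat-generalization} supplies such an example. For $d = 1,2$ every smooth member is projective space or a smooth quadric. Both are homogeneous and K-polystable, so the statement is immediate there, and we assume $d \ge 3$ from now on.

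Write $m = n+2-d \ge 1$ for the Fano index. We apply Corollary~\ref{cor:fermat-generalization} with $k = n+2-d = m$. This requires $f(x_0,\dots,x_{n-k+1})$ to define a smooth degree $d$ hypersurface in $\bP^{n-k+1} = \bP^{d-1}$. We simply take $f$ to be the Fermat polynomial $x_0^d + \dots + x_{d-1}^d$, which is smooth in $\bP^{d-1}$. We need $d-1 \ge 1$ so that the ambient space makes sense; for $d\ge 3$ it is fine. Then $X$ is the Fermat hypersurface
\[ x_0^d + \dots + x_{n+1}^d = 0, \]
which is smooth, and the corollary gives that $X$ is K-stable.

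Alternatively, one can argue directly by induction using Corollary~\ref{cor:hypersurface-cover}. The base case is the Calabi--Yau Fermat hypersurface of degree $d$ in $\bP^{d-1}$, which is smooth and hence GIT polystable. Each cyclic cover step then adds a variable and preserves K-polystability, through Theorem~\ref{thm:kstab-of-cover} and Lemma~\ref{lem:KimpliesGIT}.

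Finally, let $U \subset \bP(H^0(\bP^{n+1},\calO(d)))$ be the open set of smooth hypersurfaces, over which the universal family is a family of smooth Fano varieties. The K-stable locus is open in $U$ and contains the Fermat point, so it is a nonempty open, hence dense, subset. A general $X_d$ is therefore K-stable, and in particular K-polystable. The only point requiring care is the citation of openness of K-stability, as opposed to K-semistability. If only semistable openness is available, one combines it with the observation that a general smooth $X_d$ with $d\ge3$ has finite automorphism group and uses openness of uniform K-stability \cite{BLX19}, which coincides with K-stability in this setting. I expect this bookkeeping to be the only obstacle.
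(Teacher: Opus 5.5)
Your proof is correct and follows essentially the same route as the paper: for $d \ge 3$ you invoke Corollary~\ref{cor:fermat-generalization} (with the explicit choice $k = n+2-d$ and the Fermat polynomial as $f$) and then openness of K-stability from \cite{BLX19}, differing only in that you handle $d = 2$ via homogeneity of the smooth quadric rather than via Corollary~\ref{cor:hypersurface-cover} and induction. Your caveat about openness is also sound, since \cite{BLX19} gives openness of uniform K-stability, which coincides with K-stability by \cite{LXZ22}.
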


\begin{proof}
    If $d = 1$, $X_1 \cong \bP^n$ which is K-polystable.  If $d = 2$ and $n = 1$, then $X_2\subset \bP^2$ is $X_2 \cong \bP^1$ which is K-polystable.  If $d = 2$ and $n > 1$, the result follows from Corollary \ref{cor:hypersurface-cover} and induction as every smooth quadric can be written in the form of the Corollary.  If $d > 2$, then by Corollary \ref{cor:fermat-generalization}, there exists a K-stable hypersurface of degree $d$.  By openness of K-stability \cite[Theorem 4.5]{BLX19}, it follows that the general Fano hypersurface is K-stable.
\end{proof}

The remaining two corollaries in this section are generalizations of \cite[Cases (d), (e), pg. 3]{AGP06}. 

\begin{corollary}\label{cor:ci-base-case}
    Let $X$ be a Fano variety constructed as an $m:1$ cyclic cover of $\bP^n$ branched over a degree $d$ hypersurface $Y$, for $m\mid d$ and $m \ge 2$.  Assume in addition
    \begin{enumerate}
        \item if $d \le n$, $Y$ is K-polystable, or
        \item if $d > n$, $Y$ is GIT polystable (which holds automatically if $Y$ is smooth by Lemma \ref{lem:GITstability-hypersurfaces}). 
    \end{enumerate}
    Then, $X$ is K-polystable.
\end{corollary}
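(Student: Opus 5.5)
We apply Theorem \ref{thm:kstab-of-cover} with $Z = \bP^n$, $\calO_Z(1) = \calO_{\bP^n}(1)$ (very ample) and $k = n+1$. The space $\bP^n$ is a Gorenstein Fano variety and is K-polystable, so the standing hypotheses on $Z$ hold. The branch divisor is $Y \in |\calO_{\bP^n}(d)|$ with $m \mid d$ and $m\ge 2$, hence $d \ge 2$. The cover is then $X \to \bP^n$, realized for instance as $X = (x_{n+1}^m = f) \subset \bP(1^{n+1}, d/m)$. It remains to check condition (1) or (2) of Theorem \ref{thm:kstab-of-cover} in each case.

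\textbf{Case $d \le n$.} Here $Y$ is a Fano hypersurface, since $d < n+1$, and it is K-polystable by assumption. We must verify the numerical condition
\[ k = n+1 \le n\frac{d}{m} + d - 1. \]
Since $m \mid d$, we have $d/m \ge 1$, so the right-hand side is at least $n + d - 1 \ge n+1$ because $d \ge 2$. Condition (1) therefore holds and $X$ is K-polystable.

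\textbf{Case $d > n$.} Now $d \ge n+1 = k$, so $K_Y$ is nef and $Y$ is not Fano. We use condition (2), which requires two things.

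First, $Y$ must be GIT polystable with respect to $\Aut(\bP^n) = PGL_{n+1}$. This is the hypothesis, and when $Y$ is smooth it follows from Lemma \ref{lem:GITstability-hypersurfaces}.

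Second, the log Calabi--Yau pair $(\bP^n, \frac{n+1}{d}Y)$ must be log canonical. This is the main point to address, since it is not literally among the stated hypotheses. If $Y$ is smooth it is immediate, because the coefficient $\frac{n+1}{d}$ is at most $1$. For general GIT-polystable $Y$, I would derive it from the implication ``GIT semistable implies log canonical at the Calabi--Yau threshold''. Concretely, if $(\bP^n, \frac{n+1}{d}Y)$ were not lc, a divisor computing the lc threshold would give (via the argument of Odaka / Hacking's GIT versus lc comparison, or directly \cite[Theorem 10.3]{LZnonproportional} combined with Theorem \ref{odaka-CY}) a destabilizing one-parameter subgroup. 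Should that step be delicate, I would instead read hypothesis (2) as including log canonicity, as in Theorem \ref{thm:kstab-of-cover}(2), and note that it holds automatically in the smooth case.

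\textbf{Conclusion.} The Fano condition on $X$ gives $\frac{m-1}{m} < \frac{n+1}{d}$, so Theorem \ref{thm:kstab-of-cover}(2) applies and $X$ is K-polystable.
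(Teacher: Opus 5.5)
You follow the paper's route: Theorem~\ref{thm:kstab-of-cover} with $Z=\bP^n$ and $k=n+1$. Case (1) is fine, since $n+1\le n\frac dm+d-1$ holds because $\frac dm\ge1$ and $d\ge2$.

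The gap is in case (2). Theorem~\ref{thm:kstab-of-cover}(2) needs $(\bP^n,\frac{n+1}{d}Y)$ to be log canonical, and the corollary does not assume this. The paper's one-line proof also passes over it. Your proposed bridge, ``GIT semistable $\Rightarrow$ lc at the Calabi--Yau threshold'', is false. The known implication runs the other way: log canonicity at $\frac{n+1}{d}$ implies GIT semistability (Hacking for plane curves, and its higher-dimensional analogues), not conversely. Also, \cite[Theorem 10.3]{LZnonproportional} only upgrades K-semistability of $(Z,\epsilon Y)$ to K-polystability for small $\epsilon$. It says nothing about singularities at $c=\frac kd$. Nor does the assumption that $X$ is a klt Fano help. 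It only gives that $(\bP^n,(1-\frac1m)Y)$ is klt, and $1-\frac1m<\frac{n+1}{d}$.

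In fact no argument can close this gap, because the statement fails without log canonicity.
\begin{itemize}
\item Let $T=(F_T=0)\subset\bP^3$ be the tangent developable of the twisted cubic $C$ (the discriminant quartic of binary cubics). Its stabilizer contains $\SL_2$, acting irreducibly on $\bC^4$, so the centralizer is finite. By Luna's criterion its orbit is closed, so $T$ is GIT polystable.
\item $T$ has a cuspidal edge along $C$, so $\mathrm{lct}(\bP^3;T)=\frac56<1$.
\item Take $n=3$ and $d=m=4$. The quartic threefold $X=(x_4^4=F_T)\subset\bP^4$ is a klt Fano variety, since $\frac34<\frac56$.
\item Let $E$ be the divisor over $C$ extracted by the $(2,3)$-weighted blowup adapted to the cusp. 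Then $A_{\bP^3}(E)=5$ and $\mathrm{ord}_E T=6$.
\item $\mathrm{vol}(H-tE)$ equals $1-\frac32t^2+\frac23t^3$ on $[0,1]$ and $\frac16(3-2t)^3$ on $[1,\frac32]$. The break at $t=1$ occurs because $(H-tE)$ has degree $1-t$ on the strict transforms of the tangent lines.
\item Hence $S_{\calO(1)}(E)=\frac{11}{16}$, and $\beta_{(\bP^3,\frac34 T)}(E)=\frac12-\frac{11}{16}<0$.
\end{itemize}
By Theorem~\ref{thm:finitecovers}, $X$ is K-unstable.

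So your fallback is the necessary fix, not an optional one. Hypothesis (2) must include log canonicity of $(\bP^n,\frac{n+1}{d}Y)$. This is automatic when $Y$ is smooth, since $\frac{n+1}{d}\le 1$. With that hypothesis added, your argument coincides with the paper's.
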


This includes, in particular, double covers of $\bP^n$ branched over a smooth hypersurface of even degree $d$ such that $\frac{n+1}{2} \leq \frac{d}{2} \le n$  generalizing \cite[Case (d), pg. 3]{AGP06}, and general weighted projective hypersurfaces $X_d \subset \bP(1^{n+1},a_{n+1})$ such that $a_{n+1} \mid d$ (in the notation of the corollary, $m = \frac{d}{a_{n+1}}$): if $Y = (f= 0) \subset \bP^n$ is a general Fano hypersurface of degree $d$, it is K-polystable by Corollary \ref{cor:general-hypersurface}, and the $m:1$ cover is given by $X = (f + x_{n+1}^m = 0) \subset \bP(1^{n+1}, a_{n+1})$.  Because $X$ is K-polystable, the general such $X_d$ is K-polystable.  

\begin{proof}
    This is Theorem \ref{thm:kstab-of-cover} applied to $Z = \bP^n$ and $Y \in |\calO_{\bP^n}(d)|$. 
\end{proof}

\begin{corollary}
    Let $X$ be a Fano variety constructed as an $m:1$ cyclic cover of a K-polystable Fano hypersurface $Z \subset \bP^{n+1}$ of degree $k$, branched over a hypersurface $Y \subset |\calO_{Z}(d)|$, for $m \ge 2$, $m \mid d$.  Assume in addition $Y$ is GIT polystable (which holds automatically if $Y$ is smooth by Lemma \ref{lem:GITstability-finiteAut} or Lemma \ref{lem:GIT-of-(2,d)ci}), and, if $d < n+2-k$, $Y$ is K-polystable.  Then, $X$ is K-polystable.
\end{corollary}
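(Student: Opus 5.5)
The plan is to apply Theorem \ref{thm:kstab-of-cover} directly, with $Z$ the given hypersurface of degree $k$ in $\bP^{n+1}$. One point of notation comes first: in the corollary the letter $k$ denotes the degree of $Z$, while in Theorem \ref{thm:kstab-of-cover} it denotes the integer with $\calO(-K_Z)\cong\calO_Z(k)$. To avoid a clash, write $\iota := n+2-k$. By adjunction $\calO(-K_Z)\cong \calO_Z(\iota)$, and $\iota>0$ because $Z$ is Fano.

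The hypotheses of Theorem \ref{thm:kstab-of-cover} on $Z$ are checked as follows.
\begin{itemize}
\item $Z$ is a hypersurface, hence Gorenstein.
\item $\calO_Z(1)$, the restriction of $\calO_{\bP^{n+1}}(1)$, is very ample.
\item $Z$ is K-polystable by assumption.
\item $m\mid d$ and $m\ge 2$ force $d\ge 2$.
\end{itemize}
It remains to split into the two cases of Theorem \ref{thm:kstab-of-cover} according to whether $Y$ is Fano. By adjunction $-K_Y = \calO_Y(\iota-d)$, so $Y$ is Fano exactly when $d<\iota=n+2-k$.

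\textbf{Case $d<n+2-k$.} Here $Y$ is Fano and K-polystable by hypothesis, and we must check the numerical condition $\iota\le n\frac{d}{m}+d-1$. Since $m\mid d$ we have $d/m\ge 1$, so $n\frac{d}{m}\ge n$. Since $d\ge 2$ we have $d-1\ge 1$. Hence the right-hand side is at least $n+1$. On the other hand $\iota=n+2-k\le n+1$ because $k\ge 1$. So case (1) of Theorem \ref{thm:kstab-of-cover} applies and $X$ is K-polystable.

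\textbf{Case $d\ge n+2-k$.} Here $K_Y$ is nef, and we use case (2) of Theorem \ref{thm:kstab-of-cover}. This requires two things.
\begin{itemize}
\item $Y$ must be GIT polystable for $\Aut(Z)$, which is a hypothesis of the corollary.
\item The pair $(Z,\frac{\iota}{d}Y)$ must be log canonical. This is the one point not literally among the stated hypotheses.
\end{itemize}
For log canonicity, in the situations indicated in the parenthetical ($Y$ smooth, and $Z$ a smooth hypersurface) I would argue as follows. The pair $(Z,Y)$ is snc, hence log canonical. Since $0<\frac{\iota}{d}\le 1$, the pair $(Z,\frac{\iota}{d}Y)$ is log canonical as well. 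More generally, I would note that log canonicity follows whenever $(Z,Y)$ is lc, for instance from inversion of adjunction when $Y$ has lc singularities.

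I expect this log canonicity step to be the main obstacle, or at least the place where the hypotheses need care, since GIT polystability alone does not control singularities. If necessary I would read the corollary as implicitly assuming $Y$ smooth (or $(Z,Y)$ lc) in this range. With that, Theorem \ref{thm:kstab-of-cover}(2) gives that $X$ is K-polystable. The remaining requirement of that theorem, that $X$ be Fano, is part of the statement of the corollary.
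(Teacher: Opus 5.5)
Your argument is the paper's argument: the paper's entire proof is ``apply Theorem~\ref{thm:kstab-of-cover} to $Z$ and $Y$.'' Your check that $n+2-k\le n\frac{d}{m}+d-1$ in the Fano case is correct, and it fills in a detail the paper leaves implicit. The log canonicity of $(Z,\frac{n+2-k}{d}Y)$ that you flag for $d\ge n+2-k$ is a genuine omission in the corollary and in its one-line proof, not a weakness of your proposal. It also cannot be dropped: without it the statement is false.

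Here is a counterexample. Let $Z=\{x_0^3+\dots+x_4^3=0\}\subset\bP^4$. It is K-stable and $\Aut(Z)$ is finite, so every $Y\in|\calO_Z(2)|$ is GIT stable by Lemma~\ref{lem:GITstability-finiteAut}. Let $p=[1:-1:0:0:0]$; then $Z\cap\{x_0+x_1=0\}$ is the cone with vertex $p$ over a smooth plane cubic. Let $Y=Z\cap\{(x_0+x_1)x_2=0\}$, so that $m=d=2=n+2-k$ and $\mathrm{mult}_pY=4$.
\begin{itemize}
\item The pair $(Z,\frac12Y)$ is klt. Writing $Y_1,Y_2$ for the two components, its log discrepancies are the average of those of the lc pair $(Z,Y_1)$ and the plt pair $(Z,Y_2)$.
\item Hence the double cover $X=\{w^2=(x_0+x_1)x_2,\ \sum_i x_i^3=0\}\subset\bP^5$ is a klt Fano threefold.
\item However, $(Z,Y)$ is not lc at $p$.
\item Since $-(K_Z+\frac12Y)\sim H$, we get $A_{(Z,\frac12Y)}(\mathrm{ord}_p)=3-2=1$.
\item On the other hand, $S_H(\mathrm{ord}_p)\ge\frac13\int_0^{\sqrt[3]{3}}(3-t^3)\,dt=\frac34\sqrt[3]{3}>1$.
\end{itemize}
So $(Z,\frac12Y)$ is K-unstable, and hence $X$ is K-unstable by Theorem~\ref{thm:finitecovers}. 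Reading the corollary with the extra lc hypothesis, as you propose, is therefore necessary.

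One small remark on your smooth-$Y$ case: you do not need $Z$ to be smooth there. A smooth Cartier divisor forces $Z$ to be smooth along $Y$, and $Z$ is klt elsewhere because it is K-polystable. So $(Z,Y)$ is lc whenever $Y$ is smooth.
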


This includes, in particular, double covers of a smooth quadric $Q \subset \bP^{n+1}$ branched over a smooth hypersurface of even degree $d$ such that $\frac{n}{2} \leq \frac{d}{2} < n$  as in \cite[Case (e), pg. 3]{AGP06}. 

\begin{proof}
    This is Theorem \ref{thm:kstab-of-cover} applied to the given $Z$ and $Y$. 
\end{proof}

\section{K-stability of complete intersections with small Fano index}

In this section, we study the case of small Fano index.  The same results on complete intersections $X \subset \bP^{n+k}$ of Fano index 1 were obtained simultaneously by Yijue Hu, which will appear in forthcoming work. 

Using the machinery developed in the previous sections, we obtain the K-stability of general complete intersections of small index.

\begin{theorem}\label{thm:index1}
	Let $k \ge 1$ and $n \ge 2$.  Let $X\subset \mathbb P(1^{n+1},a_{n+1},\ldots,a_{n+k})$ be a Fano weighted complete intersection of type $(r_1,\ldots,r_k)$, where $r_i \ge 2$ and  $a_{n+i} \, \vert \, r_i$ for each $i$, with the following form: 
	\begin{equation*}
		X\colon \begin{cases}
			f_1+x_{n+1}^{r_1/{a_{n+1}}}=0\\
			f_2+x_{n+2}^{r_2/{a_{n+2}}}=0\\
			\quad \vdots \\
			f_k+x_{n+k}^{r_k/{a_{n+k}}}=0\\
		\end{cases} \subset \mathbb P(1^{n+1},a_{n+1},\ldots,a_{n+k})
	\end{equation*}
	such that $f_i \in \mathbb C[x_0,\ldots, x_n]$ is a homogeneous polynomials of degree $r_i$ for $1\leq i\leq k $ and  $\sum B_i$ is snc where $B_i$ is the divisor in $\mathbb P^n$ corresponding to the vanishing of $f_i$. Let $q:=\min\Big\{\frac{a_{n+1}}{r_1},\ldots,\frac{a_{n+k}}{r_k}\Big\}$. 
    \begin{enumerate}
        \item If the Fano index $\iota_X$ satisfies
	\begin{equation} \label{eq:smallindexineq}
	\iota_X < (n+1)q    
	\end{equation}
	then $X$ is K-polystable and is K-stable if $\mathrm{Aut}(X)$ is finite.
    \item If the Fano index $\iota_X$ satisfies
	\begin{equation} \label{eq:smallindexeq}
	\iota_X = (n+1)q    
	\end{equation}
	and each $B_i$ is a GIT-polystable hypersurface of $\mathbb P^n$, then $X$ is K-polystable and is K-stable if $\mathrm{Aut}(X)$ is finite.
    \end{enumerate}
In particular, if $X$ as above is smooth such that $\iota_X\leq (n+1)q$ and is not isomorphic to $\mathbb P^n$ or a quadric, then it is K-stable.
\end{theorem}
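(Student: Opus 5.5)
The plan is to realize $X$ as a Galois cover of $\mathbb P^n$ with abelian group $G=\prod_{i=1}^k \mu_{m_i}$, where $m_i=r_i/a_{n+i}$, and then apply Theorem \ref{thm:finitecovers}. The projection $[x_0:\dots:x_{n+k}]\mapsto[x_0:\dots:x_n]$ is well defined on $X$, because $X$ misses the locus $(x_0=\dots=x_n=0)$: on that locus the equations force every $x_{n+i}=0$. This projection is the quotient by $G$, where $\mu_{m_i}$ acts on $x_{n+i}$. Its ramification along $B_i$ has index $m_i$, so $K_X=\pi^*\big(K_{\mathbb P^n}+\sum_i(1-\tfrac1{m_i})B_i\big)$. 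It therefore suffices to show that the log Fano pair
\[
\Big(\mathbb P^n,\ \Delta:=\sum_{i=1}^k \big(1-\tfrac{a_{n+i}}{r_i}\big)B_i\Big)
\]
is K-polystable. Note that $-(K_{\mathbb P^n}+\Delta)\sim \mathcal O\big(n+1-\sum(r_i-a_{n+i})\big)=\mathcal O(\iota_X)$.

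To do this I would run interpolation (Proposition \ref{prop:k-interpolation}) between $(\mathbb P^n,0)$ and a log Calabi--Yau pair. Define the boundary $\Gamma:=\sum_i c_iB_i$ with $c_i$ proportional to $1-\tfrac{a_{n+i}}{r_i}$ and scaled so that $\sum c_ir_i=n+1$. Concretely, $\Gamma=t^{-1}\Delta$ with $t=\tfrac{n+1-\iota_X}{n+1}$, so that $\Delta=t\Gamma$. The first check is that $(\mathbb P^n,\Gamma)$ is lc, or klt. Since $\sum B_i$ is snc, this holds exactly when every $c_i\le 1$ (respectively $<1$). Now $c_i\le1$ means
\[
1-\tfrac{a_{n+i}}{r_i}\le \tfrac{n+1-\iota_X}{n+1},\quad\text{i.e.}\quad \iota_X\le (n+1)\tfrac{a_{n+i}}{r_i}.
\]
Requiring this for every $i$ gives exactly $\iota_X\le (n+1)q$, with strict inequality giving klt.

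Two cases remain. In case (1), $(\mathbb P^n,\Gamma)$ is klt log Calabi--Yau, hence K-stable by Theorem \ref{odaka-CY}. Interpolating with $D=0$ then gives K-semistability. To get polystability I would use part (3) of Proposition \ref{prop:k-interpolation}, taking $\Delta$ there to be $\Gamma$ and $D$ to be $\epsilon\Gamma$. This gives uniform K-stability of $(\mathbb P^n,s\Gamma)$ for $s\in(0,1)$; since $t<1$ (as $\iota_X\ge1$), $(\mathbb P^n,\Delta)$ is K-stable. In case (2), $(\mathbb P^n,\Gamma)$ is only lc, hence K-semistable, and I mimic Proposition \ref{prop:polystabilityofpair}. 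For small $\epsilon$, $(\mathbb P^n,\epsilon\Gamma)$ is K-semistable, and by \cite[Theorem 10.3]{LZnonproportional} it is K-polystable if the divisor $\sum c_iB_i$ is GIT-polystable under $\mathrm{SL}(n+1)$. Interpolating between $(\mathbb P^n,\epsilon\Gamma)$ and $(\mathbb P^n,\Gamma)$ then gives K-polystability at $s=t$.

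I expect the main obstacle to be this GIT step. The boundary is a weighted sum of hypersurfaces of possibly different degrees, whereas Lemma \ref{lem:sncquadrics} treats only equal-weight products of quadrics. My plan is to use additivity of the Hilbert--Mumford weight for weighted products: we have $\mu(\Gamma,\lambda)=\sum c_i\mu(f_i,\lambda)$, and each term is $\ge0$ because each $B_i$ is GIT-polystable. This yields semistability. For a closed orbit, equality would force $\mu(f_i,\lambda)=0$ for every $i$, and then polystability of each $B_i$ puts each $f_i$ in the fixed locus of $\lambda$, so $\lambda$ lies in the common stabilizer and the limit of $\Gamma$ is $\Gamma$ itself. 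Possibly the non-proportional framework of \cite{LZnonproportional} is needed here instead.

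Finally, K-polystability of $X$ follows from Theorem \ref{thm:finitecovers}. If $\mathrm{Aut}(X)$ is finite, \cite{blum2019uniqueness} upgrades this to K-stability. For the last assertion, $X$ is smooth, so \cite[Theorem 1.3]{autsmthWCI} gives finite $\mathrm{Aut}(X)$ unless $X\cong\mathbb P^n$ or a quadric. In case (2), with smooth $B_i$, GIT-polystability holds by Lemma \ref{lem:GITstability-hypersurfaces}.
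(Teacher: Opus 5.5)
\paragraph{Assessment.} Most of your proposal follows the paper: the Galois quotient $X\to\mathbb P^n$ with boundary $\sum_i(1-\frac{a_{n+i}}{r_i})B_i$, the rescaled log Calabi--Yau pair $\Gamma$, the snc computation matching klt/lc with $\iota_X<(n+1)q$ and $\iota_X\le(n+1)q$, interpolation, and Theorem \ref{thm:finitecovers}. Case (1) is fine.

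\paragraph{The gap.} The gap is the GIT step in case (2) when $k\ge 2$. You claim that if $B_i$ is GIT-polystable and $\mu(f_i,\lambda)=0$, then $f_i$ is fixed by $\lambda$. This is false. Polystability only gives $\lim_{t\to0}\lambda(t)\cdot[f_i]\in\mathrm{SL}(n+1)\cdot[f_i]$, i.e.\ $f_i$ is fixed by some conjugate $g_i^{-1}\lambda g_i$. Since $g_i$ depends on $i$, the limit of the tuple $(f_1,\dots,f_k)$ need not lie in its orbit.

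For example, in $\mathbb P^2$ take $Q_1=(x_0x_2+x_1^2=0)$, $Q_2=(x_0x_2+x_1^2+x_2^2=0)$ and $\lambda(t)\colon(x_0,x_1,x_2)\mapsto(t^{-1}x_0,x_1,tx_2)$.
\begin{itemize}
\item Both smooth conics have weight $0$; $Q_1$ is fixed and $Q_2$ specializes to $Q_1$.
\item The quartic $Q_1Q_2$ therefore specializes to the double conic $Q_1^2$.
\item So $\frac12(Q_1+Q_2)$ is semistable but not polystable, even though each $Q_i$ is polystable.
\end{itemize}
These conics are tangent at $[1:0:0]$. Your argument never uses the snc hypothesis, and without it the conclusion fails.

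\paragraph{How the paper fills it.} The missing ingredient is exactly where the paper uses snc.
\begin{itemize}
\item If some $B_i$ is GIT-stable (e.g.\ smooth of degree $\ge3$), your additivity already gives $\mu>0$ for every nontrivial $\lambda$, since all coefficients are positive.
\item The paper then reduces to the case where all $B_i$ are quadrics. After discarding equations with $r_i=a_{n+i}$, every $a_{n+i}=1$ and every coefficient equals $\frac12$. So the boundary is the single divisor $\frac12\sum_iB_i$ with $\sum_iB_i\in|\mathcal O(2k)|$, and no non-proportional GIT is needed.
\item Lemma \ref{lem:sncquadrics} shows $\sum_iB_i$ is GIT-stable for $k\ge2$. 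If $\mu(f_1,\lambda)=\mu(f_2,\lambda)=0$ for a nontrivial $\lambda$, invertibility of the $A_l$ forces symmetric weights, so both quadrics contain $\mathbb P(W_{\lambda_0})$. A nonzero $x\in\ker(M_1-cM_2)$ is then a point of $B_1\cap B_2$ where the gradients are proportional, contradicting snc.
\end{itemize}
With this in hand, Proposition \ref{prop:polystabilityofpair}(3) (your small-$\epsilon$ step plus interpolation) completes case (2) for smooth $B_i$, and in particular the final assertion.
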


\begin{proof}
    We denote by $\mathbb P$ the weighted projective space $\mathbb P(1^{n+1},a_{n+1},\ldots,a_{n+k})$. Let $G=\prod_{i=1} ^k \mathbb Z/r_k\mathbb Z$ be a group acting on $\mathbb P$ via $\xi \cdot (x_0,\ldots,x_{n+k})=(x_0,\ldots,\xi_1 x_{n+1},\ldots \xi_kx_{n+k})
$ where $\xi_i \in \mathbb Z/r_i\mathbb Z$ is a primitive $r_i$-th root of unity. Then $G$ acts on $X$ and we have a quotient morphism $X\rightarrow X/G\simeq \mathbb P^n$ branched along the divisor $B=\sum_{i=1}^k \big( 1-\frac{a_{n+i}}{r_i}\big)B_i$. Moreover,
$$
\pi^*(K_{\mathbb P^n}+B) = \pi^*\bigg(K_{\mathbb P^n}+\sum_{i=1}^k \big( 1-\frac{a_{n+i}}{r_i}\big)B_i\bigg) =\bigg(-n-1- \sum_{i=1}^k a_{n+i} + \sum_{i=1}^k r_i\bigg)H_X  = K_X.
$$
So $\pi \colon X \rightarrow \mathbb P^n$ is Galois (See \cite[Definition~3.1]{liu2022equivariant}) and $(\mathbb P^n,B)$ is a log Fano pair by \cite[Proposition~5.20]{KM} or direct observation.

 Let $\Gamma :=\lambda B$ where $\lambda =\frac{n+1}{\sum_{i=1}^k r_i - \sum_{i=1}^k a_{n+i}}$. Without loss of generality we may assume $r_i\geq 2 a_{n+i}$ for each $i$ since if $r_j = a_{n+j}$ for some $1\leq j\leq k$ then $X$ is isomorphic to a complete intersection of codimension $k-1$. Therefore, $\sum_{i=1}^k r_i - \sum_{i=1}^k a_{n+i} \geq \sum_{i=1}^k a_{n+i} \geq k \geq 1 $. Recall that the Fano index of $X$ is $1 \leq \iota_X=\sum_{i=0}^{n+k}a_i-\sum_{i=1}^k r_i=n+1+\sum_{i=1}^ka_{n+i}-\sum_{i=1}^k r_i$. We conclude that $\lambda \geq 1+\frac{1}{\sum_{i=1}^kr_i - \sum_{i=1}^ka_{n+i}} > 1$. Since $\mathbb P^n$ is smooth and $B$ is an snc divisor by assumption then the Calabi-Yau pair $(\mathbb P^n,\Gamma)$ is klt (resp. lc) if and only if 
 \begin{equation*} \label{eq:klt}
 \lambda\Big(1-\frac{a_{n+j}}{r_{j}}\Big) < 1, \quad \forall \,\, 1\leq j\leq k  \quad  \quad \Bigg(\text{resp.} \,\,\,\,\,\lambda\Big(1-\frac{a_{n+j}}{r_{j}}\Big) \leq 1, \quad \forall \,\, 1\leq j\leq k \Bigg)  
\end{equation*}
which is equivalent to $\lambda(1-q) < 1$ (resp. $\lambda(1-q) \leq 1$) where $q:=\min\Big\{\frac{a_{n+1}}{r_1},\ldots,\frac{a_{n+k}}{r_k}\Big\}$. Since $1 \leq \iota_X=\sum_{i=0}^{n+k}a_i-\sum_{i=1}^k r_i$ so that $\sum_{i=1}^k r_i - \sum_{i=1}^k a_{n+i}=n+1-\iota_X$, the previous inequality becomes $\frac{n+1}{n+1-\iota_X} \cdot (1-q) < 1.
$ (resp. $\frac{n+1}{n+1-\iota_X} \cdot (1-q) \leq 1.
$)
In other words, we showed that the Calabi-Yau pair $(\mathbb P^n,\Gamma)$ is klt (resp. lc) if and only if 
\begin{equation} \label{eq:indexcond}
\iota_X < (n+1)q. \quad \Big(\text{resp.} \,\,\, \,\iota_X \leq (n+1)q\Big)
\end{equation}
In the klt case, the K-stability of $(\PP^n,cB)$ for all $c\in (0,\lambda)$ follows directly from  Proposition \ref{prop:polystabilityofpair}. Otherwise, in the lc case, we may assume that the $B_i$ are quadrics and the GIT-stability of $B$ is Lemma \ref{lem:sncquadrics}. So we can now apply Proposition \ref{prop:polystabilityofpair} to conclude similarly the K-stability of $(\PP^n,cB)$ for all $c\in (0,\lambda)$.

Hence, choosing $c=1$ shows that $(\mathbb P^n,B)$ is K-stable (resp. K-polystable). By \cite[Theorem~1.2]{liu2022equivariant} we conclude that $X$ is K-polystable. On the other hand, if  $\mathrm{Aut}(X)<\infty $,  it follows that $X$ is K-stable.

The last assertion follows from the fact that smooth complete intersections in well-formed weighted projective space have finite automorphism group with the exception of $\mathbb P^n$ and the quadric hypersurface. See \cite[Theorem~1.3]{autsmthWCI}.
\end{proof}

The index conditions are satisfied automatically for small index: 

\begin{corollary}\label{cor:index-one}
Let $k \ge 1$ and $n \ge 2$.  Let $X\subset \mathbb P(1^{n+1},a_{n+1},\ldots,a_{n+k})$ be a general Fano weighted complete intersection of type $(r_1,\ldots,r_k)$, where $r_i \ge 2$ and  $a_{n+i} \, \vert \, r_i$ for each $i$.  If $X$ has Fano index 1, then $X$ is K-polystable and K-stable if $\Aut(X)$ is finite.
\end{corollary}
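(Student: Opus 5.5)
The plan is to reduce to Theorem \ref{thm:index1}(1) by exhibiting one K-polystable member of the family in the special form considered there, and then to spread the conclusion to the general member by openness. Concretely, choose general homogeneous $f_i \in \bC[x_0,\dots,x_n]$ of degree $r_i$. By Bertini the divisors $B_i = (f_i = 0) \subset \bP^n$ are smooth and $\sum B_i$ is snc. Set
\[ X_0\colon \; f_i + x_{n+i}^{r_i/a_{n+i}} = 0, \qquad 1 \le i \le k. \]
The proof of Proposition \ref{prop:smoothness-of-wted-ci} shows that $X_0$ is a smooth member of the family.

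The key step is to check the index inequality \eqref{eq:smallindexineq} when $\iota_X = 1$, i.e. that $1 < (n+1)q$. We may assume each $r_i \ge 2a_{n+i}$, since an equation with $r_j = a_{n+j}$ simply eliminates the variable $x_{n+j}$, and we pass to a lower codimension complete intersection of the same shape. Let $q = a_{n+j}/r_j$ be attained at the index $j$. The index-one condition reads $n = \sum_i (r_i - a_{n+i})$, so
\[ r_j - a_{n+j} \le n \quad\Longrightarrow\quad r_j \le n + a_{n+j} < (n+1)a_{n+j}, \]
where the strict inequality uses $a_{n+j} \ge 1$ and, when $a_{n+j} = 1$, the bound $r_j \le n+1$ with equality excluded. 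Equality $r_j = n+1$ with $a_{n+j} = 1$ would force $k = 1$ and $r_1 - 1 = n$; this is the borderline hypersurface case $X_{n+1} \subset \bP^{n+1}$. There the pair $(\bP^n, \lambda B)$ is only lc, so I would instead invoke Theorem \ref{thm:index1}(2), using that the smooth $B_1$ is GIT-polystable by Lemma \ref{lem:GITstability-hypersurfaces}. Outside that case we get $(n+1)q > 1 = \iota_X$, and Theorem \ref{thm:index1}(1) gives that $X_0$ is K-polystable. This bookkeeping, and in particular the boundary case where \eqref{eq:smallindexineq} degenerates into \eqref{eq:smallindexeq}, is the step I expect to need the most care.

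To pass to the general $X$, first suppose $\Aut(X_0)$ is finite. Then $X_0$ is K-stable, and since K-stability is open in the smooth family of such complete intersections \cite{BLX19}, the general member is K-stable, hence K-polystable. If some members have infinite automorphism group, \cite[Theorem 1.3]{autsmthWCI} forces them to be $\bP^n$ or a quadric. Neither has index $1$ when $n \ge 2$, so in fact $\Aut(X_0)$ is finite and the K-stability statement follows as well. Thus the general index-one $X$ is K-polystable, and it is K-stable whenever $\Aut(X)$ is finite.
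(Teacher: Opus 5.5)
Your argument is correct and follows the paper's skeleton. You pass to the special form of Theorem~\ref{thm:index1} with snc $B_i$, and check that $1<(n+1)q$ can fail only when, after discarding equations with $r_i=a_{n+i}$, one has $k=1$, $a_{n+1}=1$, $r_1=n+1$. The paper reaches the same residual case by assuming $1\ge (n+1)q$ and rearranging the Fano condition.

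The real difference is how that residual case $X_{n+1}\subset\bP^{n+1}$ is handled. The paper cites Fujita \cite{kentohyper}, which gives K-stability of \emph{every} smooth index-one hypersurface. You instead stay within the paper's cover machinery and apply Theorem~\ref{thm:index1}(2), using that a smooth hypersurface of degree $n+1\ge 3$ is GIT-stable. This is legitimate: for $k=1$ the relevant log Calabi--Yau pair is $(\bP^n,B_1)$, which is lc, and the divisor whose GIT-polystability Proposition~\ref{prop:polystabilityofpair}(3) requires is $B_1$ itself. So this case is exactly Corollary~\ref{cor:hypersurface-cover}, and you could cite that directly. Your route is self-contained and uniform in $n$, but it only gives K-stability of the general member, which is all the corollary needs. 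The paper's route is shorter and proves more, but relies on a deep external theorem.

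You are also more careful than the paper in passing to the general member: you check that $\Aut(X_0)$ is finite, so that openness of K-stability applies, since openness is not available for K-polystability alone. One cosmetic slip: $n+a_{n+j}<(n+1)a_{n+j}$ needs $a_{n+j}\ge 2$, not $a_{n+j}\ge 1$. Since you treat $a_{n+j}=1$, $r_j=n+1$ separately, nothing is lost.
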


\begin{proof}
    By Theorem \ref{thm:index1}, this holds if $1 < (n+1)q$ where $q:=\min\Big\{\frac{a_{n+1}}{r_1},\ldots,\frac{a_{n+k}}{r_k}\Big\}$.  To complete the proof, assume that $1 \ge (n+1)q$.  Up to relabeling, we may assume that $q=\frac{a_{n+1}}{r_1}$, so $r_1 \ge (n+1) a_{n+1}$.  Because $X$ was Fano, we have $r_1 + \dots + r_k \le n + a_{n+1} + \dots + a_{n+k}$, and therefore 
    \[ (n+1)a_{n+1} + r_2 + \dots + r_k \le r_1 + \dots + r_k \le n + a_{n+1} + \dots + a_{n+k}. \] Rearranging, we find 
    \[ (r_2 - a_{n+2}) + \dots + (r_k - a_{n+k}) \le n(1 - a_{n+1}).\]
    As the right side is non-positive and the left is non-negative, this is possible only if both are $0$, and hence $r_i = a_{n+i}$ for $i \ge 2$ and $1 = a_{n+1}$.  This implies $X$ is isomorphic to a hypersurface of degree $r_1$ in $\bP^{n+1}$, which is K-polystable by \cite{kentohyper}.
\end{proof}

\section{K-stability of the general complete intersection}\label{sec:generalcase}

The aim of this section is to prove the general complete intersection of arbitrary dimension and multi-degree is K-polystable, and in fact K-stable if it is not isomorphic to a $\bP^n$ or a quadric.  We will use induction on the dimension and codimension of the complete intersection, primarily making use of Theorem \ref{thm:kstab-of-cover}.  We will prove a more general result for weighted projective spaces and deduce the complete intersection result as a corollary.

For the induction, we first prove Theorem \ref{thm:general-case-induction} stated in the introduction. 

\begin{theorem}\label{thm:general-case-induction}
	    For integers $n \geq 2$ and $k \geq 2$, let $X \subset \mathbb{P}(1^{n+1}, a_{n+1}, \dots, a_{n+k})$ be a Fano weighted complete intersection of type $(r_1, \dots, r_k)$ such that $a_{n+i} \mid r_i$ and dimension $\dim X=n$ with equations
	\begin{equation*}
		X\colon \begin{cases}
			f_1=0\\
			f_2=0\\
			\quad \vdots \\
			f_{k-1}=0 \\
			f_k+x_{n+k}^{r_k/a_{n+k}}=0\\
		\end{cases} 
	\end{equation*}
	where $f_1, \dots ,f_k \in \mathbb{C} \left[ x_0, \dots , x_{n+k-1} \right]$ are a regular sequence of homogeneous polynomials of degree $\deg f_i = r_i$. 
	Call $Y \subset \mathbb{P}(1^{n+1}, a_{n+1}, \dots, a_{n+k-1})$ the complete intersection of type $(r_1, \dots, r_{k-1})$ defined by 
	\begin{equation*}
		Y \colon \begin{cases}
			f_1=0\\
			f_2=0\\
			\quad \vdots \\
			f_{k-1}=0 \\
		\end{cases} 
	\end{equation*}
	Call $B_Y$ the complete intersection $Y \cap (f_k=0) \subset \mathbb{P}(1^{n+1}, a_{n+1}, \dots, a_{n+k-1})$.  Assume $Y$ is K-polystable. If $B_Y$ is Fano, assume that $B_Y$ is K-polystable, and if $B_Y$ is not Fano, assume that the pair $(Y, \frac{n+1+\sum_{i=1}^{k-1} a_{n+i}-\sum_{i=1}^{k-1} r_i}{r_k}B_Y)$ is log canonical. Then, $X$ is K-polystable. If $\Aut(X)$ is finite, then $X$ is K-stable.
\end{theorem}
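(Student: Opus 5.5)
The plan is to realize $X$ as an $m$-fold cyclic cover of $Y$ and apply Theorem \ref{thm:kstab-of-cover} with $Z = Y$, $m = r_k/a_{n+k}$, and branch divisor $B_Y \in |\calO_Y(r_k)|$. The group $\mathbb{Z}/m\mathbb{Z}$ acts on $x_{n+k}$ by an $m$-th root of unity. Projecting away from the coordinate $x_{n+k}$, i.e. $[x_0:\dots:x_{n+k}] \mapsto [x_0:\dots:x_{n+k-1}]$, is well defined on $X$. Indeed, if $x_0 = \dots = x_{n+k-1} = 0$ then $f_k = 0$ forces $x_{n+k} = 0$. So this projection gives a finite morphism $X \to Y$ of degree $m$ whose branch locus is $Y \cap (f_k = 0) = B_Y$.

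Next I check the numerical hypotheses. Take $\calO_Y(1)$ to be the restriction of $\calO(1)$; where weights differ one replaces it by a suitable multiple. By adjunction $-K_Y = \calO_Y(k')$ with
\[ k' = n+1+\textstyle\sum_{i=1}^{k-1} a_{n+i} - \sum_{i=1}^{k-1} r_i, \]
and $d = r_k$, so $m \mid d$ by hypothesis. The coefficient $\frac{k'}{d}$ in condition (2) of Theorem \ref{thm:kstab-of-cover} is then exactly the coefficient in the statement. We have $\dim Y = n$, and $Y$ is Gorenstein since it is a complete intersection; for general data it is in fact smooth. If $r_k = a_{n+k}$ then $X \cong Y$ and there is nothing to prove, so we may assume $m \ge 2$.

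In the Fano case we must verify $k' \le n\frac{d}{m} + d - 1$. Since $\iota_X = k' + a_{n+k} - r_k \ge 1$, and $\frac{d}{m} = a_{n+k}$, this becomes $k' \le n a_{n+k} + r_k - 1$. This holds because $k' \le n + a_{n+k} + r_k - 1 - \dots$; more directly, $k' \le n+1 + \sum a - \sum r \le n+1$ when $r_i \ge a_{n+i}$, and $n+1 \le n a_{n+k} + r_k - 1$ since $r_k \ge 2$. In the non-Fano case the lc hypothesis is given, but Theorem \ref{thm:kstab-of-cover}(2) also requires that $B_Y$ be GIT polystable under $\Aut(Y)$.

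The main obstacle is this GIT polystability in case (2). My first approach is Lemma \ref{lem:GITstability-finiteAut} when $\Aut(Y)$ is finite. When it is not — for example $Y = \bP^n$, already covered by the hypersurface case, or $Y$ a quadric, handled by Lemma \ref{lem:GIT-of-(2,d)ci} and Lemma \ref{lem:GITstability-hypersurfaces} — I would argue as in Lemma \ref{lem:GIT-of-(2,d)ci}, using the invariant discriminant. A cleaner alternative is to bypass GIT by invoking Proposition \ref{prop:polystabilityofpair}(3) only for semistability, via Lemma \ref{lem:semistabilityofpair}(3). Then one upgrades using that $\Aut(X)$ is finite, so K-semistable plus finite automorphisms gives K-stable once one knows $X$ is K-polystable. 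I expect the honest route is to note that for $\Aut(Y)$ finite every point is GIT stable, and that by \cite{autsmthWCI} the infinite case reduces to the quadric or projective space, which the earlier lemmas handle.

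With these checks, Theorem \ref{thm:kstab-of-cover} gives that $X$ is K-polystable. Finiteness of $\Aut(X)$ then gives K-stability.
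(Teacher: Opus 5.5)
Your proposal is essentially the paper's proof: $X$ is the $\frac{r_k}{a_{n+k}}$-fold cyclic cover of $Y$ branched along $B_Y$, and one applies Theorem~\ref{thm:kstab-of-cover}, using case (1) when $B_Y$ is Fano and case (2) otherwise. In case (2) the GIT polystability of $B_Y$ comes from Lemma~\ref{lem:GITstability-finiteAut} when $\Aut(Y)$ is finite and from Lemma~\ref{lem:GIT-of-(2,d)ci} when $Y$ is a quadric. Your check $k'\le n+1\le n a_{n+k}+r_k-1$ and the reduction to $m\ge 2$ are details the paper leaves implicit, and, as in the paper, your GIT step tacitly uses that $Y$ and $B_Y$ are smooth. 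Drop the ``cleaner alternative'': it is circular, since K-semistability together with finite $\Aut(X)$ does not give K-stability unless K-polystability is already known.
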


\begin{proof}
    Consider the map \[\pi: \mathbb{P}(1^{n+1}, a_{n+1},\dots,a_{n+k}) \to \mathbb{P}(1^{n+1}, a_{n+1},\dots,r_{k})\] defined by \[[x_0: \dots :x_{n+k-1}:x_{n+k}] \mapsto [x_0: \dots: x_{n+k-1}: x_{n+k}^{r_k/a_{n+k}}].\]  Denoting by $y_{n+k}$ the last coordinate, the image of $X$ under this map is defined by the equations 
    \begin{equation*}
        X\colon \begin{cases}
            f_1=0\\
            f_2=0\\
            \quad \vdots \\
            f_{k-1}=0 \\
            f_k+y_{n+k}=0\\
            \end{cases} 
    \end{equation*}
so the image of $X$ is the $Y \subset \mathbb{P}(1^{n+1}, a_{n+1},\dots,a_{n+k-1})$ where 
    \begin{equation*}
        Y \colon \begin{cases}
            f_1=0\\
            f_2=0\\
            \quad \vdots \\
            f_{k-1}=0 \\
            \end{cases} 
    \end{equation*}
    as above. By assumption, $Y$ is K-polystable.  

\textit{Case 1:} Assume $B_Y$ is not Fano. Then, the Fano index of $X$ must satisfy $\iota_X\le a_{n+k}$. Consider the log Calabi-Yau pair $(Y,\frac{\iota_Y}{r_k}B_Y)$, where $B_Y \in |\calO _Y(r_k)|$ and 
$$
\calO_Y(-K_Y) \cong \calO_Y\Big(n+1+\sum_{i=1}^{k-1} a_{n+i}-\sum_{i=1}^{k-1} r_i\Big)
=\calO_Y(\iota_X-a_{n+k}+r_k).
$$
Notice that $\iota_Y =\iota_X-a_{n+k}+r_k \le r_k$ since $\iota_X\leq a_{n+k}$ and thus $\frac{\iota_Y}{r_k} \le 1$.  By assumption, $(Y,\frac{\iota_Y}{r_k}B_Y)$ is log canonical. Moreover, if $Y$ is not a quadric hypersurface, then $B_Y$ is GIT polystable with respect to the action of $\Aut(Y)$ by Lemma \ref{lem:GITstability-finiteAut}. Otherwise the GIT stability of $B_Y$ follows from Lemma \ref{lem:GIT-of-(2,d)ci}. Hence, we can apply case (2) of Theorem \ref{thm:kstab-of-cover} to conclude that $X$ is K-polystable.

\textit{Case 2:} Assume $B_Y$ is Fano.  Then, by assumption, $B_Y$ is K-polystable. Furthermore, $B_Y \in |\calO_Y(r_k)|$ and $\calO(-K_Y) \cong \calO(n+1+\sum_{i=1}^{k-1} a_{n+i} - \sum_{i=1}^{k-1} r_i)$.  Therefore, by case (1) of Theorem \ref{thm:kstab-of-cover}, $X$ is K-polystable.
\end{proof}

Now, we prove the stability of the general complete intersection. 

\begin{theorem}\label{thm:general-case-weighted-ci}
    Let $X \subset \mathbb{P}(1^{n+1}, a_{n+1},\dots,a_{n+k})$ be a general $n$-dimensional Fano weighted complete intersection of type $(r_1, \dots, r_k)$ with $n \geq 1$ and $k\geq 1$.  Assume $a_{n+i} \mid r_i$ for each $1\le i \le k$.  Then, $X$ is K-polystable, and is K-stable if $\Aut(X)$ is finite. 
\end{theorem}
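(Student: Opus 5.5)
We argue by induction on the codimension $k$, and for fixed $k$ by induction on the dimension $n$. By openness of K-(semi/poly)stability in families over a smooth base \cite{BLX19}, together with the smoothness of the generic member (Proposition \ref{prop:smoothness-of-wted-ci}), it suffices to exhibit \emph{one} K-polystable member of each family. Moreover, the K-stable member should have finite automorphism group whenever the generic member does. By \cite[Theorem 1.3]{autsmthWCI}, the latter holds unless the generic member is $\bP^n$ or a quadric, so it is enough to produce a smooth K-polystable member.

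First we dispose of degenerate cases. If $r_i = a_{n+i}$ for some $i$, we can eliminate the variable $x_{n+i}$ and $X$ is a complete intersection of smaller codimension. We may therefore assume $r_i \ge 2a_{n+i}$ for all $i$. The case $n = 1$ is trivial, since $X \cong \bP^1$. The case $k = 1$ is Corollary \ref{cor:ci-base-case} (with $m = r_1/a_{n+1}$), applied with $Y$ a general hypersurface of $\bP^n$. If $r_1 \le n$, then $Y$ is Fano and K-polystable by Corollary \ref{cor:general-hypersurface}. If $r_1 > n$, then $Y$ is smooth and hence GIT polystable by Lemma \ref{lem:GITstability-hypersurfaces}.

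For $k \ge 2$, we order the degrees so that the last equation carries the covering variable, and apply Theorem \ref{thm:general-case-induction} to $X$ given by $f_1 = \dots = f_{k-1} = 0$, $f_k + x_{n+k}^{r_k/a_{n+k}} = 0$, with $f_1,\dots,f_k$ general polynomials in $x_0,\dots,x_{n+k-1}$. The hypotheses are then checked as follows.
\begin{enumerate}
\item $Y$ is a general Fano weighted complete intersection of codimension $k-1$ and dimension $n+1$. It is Fano because $\iota_Y = \iota_X - a_{n+k} + r_k > 0$. It is K-polystable by the inductive hypothesis on $k$.
\item $B_Y$ is a general weighted complete intersection of codimension $k$ and dimension $n$ in $\bP(1^{n+1},a_{n+1},\dots,a_{n+k-1})$. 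In this ambient space, the divisibility $a_{n+i}\mid r_i$ is only available for $i\le k-1$, while $\deg f_k = r_k$ is divisible by $1$. So $B_Y$ is again of the required shape, with the last weight replaced by $1$ in the ambient space $\bP(1^{n+2},a_{n+1},\dots,a_{n+k-1})$ after adding no variable. More precisely, $B_Y$ lives in a space with $n+1$ weight-one variables and $k-1$ extra variables, has codimension $k$, and the equation $f_k$ may be taken to involve $x_n$ as a "covering" variable of weight one. Thus $B_Y$ falls under the theorem in dimension $n-1$ (for the variables $x_0,\dots,x_{n-1}$), with weights $(1^{n}, 1, a_{n+1},\dots,a_{n+k-1})$ and degrees $(r_k, r_1,\dots,r_{k-1})$. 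If $B_Y$ is Fano, it is K-polystable by induction on $n$.
\item If $B_Y$ is not Fano, we need the pair $(Y,\frac{\iota_Y}{r_k}B_Y)$ to be log canonical. Since $\iota_Y \le r_k$ and $B_Y$ is a smooth (by genericity and Bertini on the smooth $Y$) divisor on a smooth variety, the pair is even klt when $\iota_Y<r_k$ and lc when $\iota_Y = r_k$.
\end{enumerate}
Theorem \ref{thm:general-case-induction} then gives that this particular $X$ is K-polystable. Genericity of the $f_i$ makes $X$, $Y$, $B_Y$ simultaneously smooth, since these are finitely many open conditions and each is nonempty by Proposition \ref{prop:smoothness-of-wted-ci} and Bertini.

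The main obstacle is the bookkeeping in step (2). We must ensure that $B_Y$, viewed in the right weighted projective space, is genuinely a general member of a family covered by the inductive hypothesis, namely that it is well formed, that the divisibility condition holds, and that $\dim\ge 1$. We must also handle the edge cases where $n-1 < 2$ or where $B_Y$ degenerates to $\bP^{n-1}$ or a quadric. The low-dimensional base cases ($\dim B_Y = 1$, i.e. conics and $\bP^1$) are K-polystable directly. Because the induction runs over all weighted types simultaneously, we set it up as a double induction on $(k,n)$ over the whole class in the statement, which is exactly why the theorem is stated for weighted complete intersections rather than only for $\bP^{n+k}$.
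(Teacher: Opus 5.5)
Your proposal is correct and follows essentially the paper's proof. You reduce to a single smooth special-form member via openness and \cite{autsmthWCI}, and you treat the base cases $n=1$ and $k=1$ (the latter via Corollary \ref{cor:ci-base-case}). You then run a double induction that feeds $Y$ (codimension $k-1$) and $B_Y$ (dimension $n-1$, codimension $k$, viewed in $\bP(1^n,1,a_{n+1},\dots,a_{n+k-1})$) into Theorem \ref{thm:general-case-induction}. The only differences from the paper are the order of the two inductions and your explicit reduction when $r_i=a_{n+i}$.

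Two slips to fix, neither of which breaks the argument. First, $\dim Y=n$, not $n+1$; this is harmless because your outer induction on $k$ covers $Y$ in every dimension. Second, the openness you use is that of K-(semi)stability, not K-polystability; your argument is fine because your smooth K-polystable member has finite automorphism group, hence is K-stable, and K-stability is open.
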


\begin{proof}
    We may assume that $r_1, \dots, r_k \ge 2$ by replacing $\bP(1^{n+1},a_{n+1}, \dots, a_{n+k})$ with a subspace defined by the vanishing of one coordinate function, and Proposition \ref{prop:smoothness-of-wted-ci}, we may assume that $X$ is smooth.  If $X \cong \bP^n$ or $X$ is a smooth quadric, then $X$ is K-polystable (see, e.g. Corollary \ref{cor:general-hypersurface}), so we assume that $X$ is not isomorphic to $\bP^n$ or a smooth quadric.  By openness of K-stability, it suffices to produce a single example of a K-stable complete intersection.
    
    We prove the result by a double induction on the dimension $n$ and the codimension $k$ of the complete intersection $X$.  If $(n,k) = (1,k)$, then $X$ is a smooth Fano curve, and therefore $X \cong \mathbb{P}^1$ is which K-polystable.  If $(n,k) = (n,1)$, then $X$ is a general smooth Fano weighted hypersurface, which is K-polystable by Corollary \ref{cor:ci-base-case}, and in fact K-stable by the assumption that $X$ is not isomorphic to $\bP^n$ or a smooth quadric.  To run the induction, we introduce the following notation.

    \begin{setup} \label{setup: X and Y index >=2}
    For integers $n \geq 2$ and $k \geq 2$, let $X \subset \mathbb{P}(1^{n+1}, a_{n+1},\dots,a_{n+k})$ be a Fano complete intersection of type $(r_1, \dots, r_k)$ with $a_{n+i} \mid r_i$ and dimension $\dim X=n$ with equations
    \begin{equation*}
        X\colon \begin{cases}
            f_1=0\\
            f_2=0\\
            \quad \vdots \\
            f_{k-1}=0 \\
            f_k+x_{n+k}^{r_k/a_{n+k}}=0\\
            \end{cases} 
    \end{equation*}
    where $f_1, \dots f_k \in \mathbb{C} \left[ x_0, \dots , x_{n+k-1} \right]$ are general homogeneous polynomials of degree $\deg f_i = r_i$. 
    Moreover, call $Y \subset \mathbb{P}(1^{n+1}, a_{n+1},\dots,a_{n+k-1})$ the complete intersection of type $(r_1, \dots, r_{k-1})$ defined by 
    \begin{equation*}
        Y \colon \begin{cases}
            f_1=0\\
            f_2=0\\
            \quad \vdots \\
            f_{k-1}=0 \\
            \end{cases} 
    \end{equation*}
    Call $B_k$ the hypersurface $B_k \coloneqq (f_k=0)$ in $\mathbb{P}(1^{n+1}, a_{n+1},\dots,a_{n+k-1})$ and denote by $B_Y = B_k|_Y = (f_1 = \dots = f_k = 0)$.  As the $f_i$ are general, $B_Y$ is a weighted complete intersection. Observe that, by moving one of the weights $1$ to the end of the list and writing $B_Y \subset \bP(1^{n}, a_{n+1},\dots,a_{n+k-1},1) =: \bP(1^n,b_{(n-1)+1}, \dots,b_{n-1+k})$, $B_Y$ is a complete intersection of dimension $n-1$ and of multidegree $(r_1, \dots, r_k)$ in $\bP(1^n,b_{(n-1)+1}, \dots,b_{n-1+k})$ such that $b_{(n-1)+i} \mid r_i$.
\end{setup}

Let $X \subset \mathbb{P}(1^{n+1}, a_{n+1},\dots,a_{n+k})$ be as in Setup \ref{setup: X and Y index >=2}, so $X$ has dimension $n$ and codimension $k$.  Assume by induction that the general weighted complete intersection of dimension $n-1$ and codimension $k$ in a weighted projective space $\mathbb{P}(1^{n}, b_{(n-1)+1},\dots,b_{(n-1)+k})$ defined by $k$ equations of degree $s_i$ such that $b_{(n-1)+i} \mid s_i$ is K-polystable.  Similarly, assume by induction that the general weighted complete intersection of dimension $n$ and codimension $k-1$ in $\mathbb{P}(1^{n+1}, c_{n+1},\dots,c_{n+(k-1)})$ defined by $k-1$ equations of degree $t_i$ such that $c_{n+i} \mid t_i$ is K-polystable.\footnote{This induction may be explained as follows: we induct on $n$, starting from the base case $(n,k) = (1,k)$, where the result holds for any $k$, and then assume for induction the result holds in dimension $n-1$ for any codimension $k$.  To prove this, we use induction on $k$ to show the result holds for $(n,k)$.  The base case of this induction is $(n,1)$, which holds in general, and the inductive hypothesis is then the result holds for $(n,k-1)$.  Therefore, from the two base cases that the result holds for $(n,1)$ and $(1,k)$ for any $n,k$, we may assume by induction the result holds for $(n-1,k)$ and $(n,k-1)$.} 

By the inductive hypothesis, because $Y$ as in Setup \ref{setup: X and Y index >=2} is general of dimension $n$ and codimension $k-1$, we assume $Y$ is K-polystable.  Because $B_Y$ as in Setup \ref{setup: X and Y index >=2} is general of dimension $n-1$ and codimension $k$, if it is Fano, we may assume by induction $B_Y$ is K-polystable, and if it is not Fano, we may assume it is smooth by Proposition \ref{prop:smoothness-of-wted-ci}.  By the computation in the proof of Theorem \ref{thm:general-case-induction}, as $B_Y$ is not Fano, $\frac{n+1+\sum_{i=1}^{k-1} a_{n+i}-\sum_{i=1}^{k-1} r_i}{r_k} \le 1$ and hence $(Y, \frac{n+1+\sum_{i=1}^{k-1} a_{n+i}-\sum_{i=1}^{k-1} r_i}{r_k}B_Y)$ is log canonical.  The hypotheses of Theorem \ref{thm:general-case-induction} are satisfied and $X$ is K-polystable. 

Finally, by openness of K-stability \cite[Theorem 4.5]{BLX19}, the general Fano weighted complete intersection of type $(r_1, \dots, r_k)$ in $\mathbb{P}(1^{n+1}, a_{n+1},\dots,a_{n+k})$ such that $a_{n+i} \mid r_i$ is K-polystable and K-stable if it is not isomorphic to $\bP^n$ or a quadric.  This proves Theorem \ref{thm:wted-version}.
\end{proof}

Letting $a_{n+1} = \dots = a_{n+k} = 1$ in the previous result, we obtain the K-stability of the general complete intersection, which proves Theorem \ref{thm: index >=2}.

\begin{corollary}
        Let $X \subset \mathbb{P}^{n+k}$ be a general $n$-dimensional Fano complete intersection of type $(r_1, \dots, r_k)$ with $n \geq 1$ and $k\geq 1$.  Assume $r_i \ge 2$ for each $i$, and if $\min r_i = 2$, then $k \ge 2$. Then, $X$ is K-stable. 
\end{corollary}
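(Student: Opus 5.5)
This is the specialization of Theorem \ref{thm:general-case-weighted-ci} to $a_{n+1}=\dots=a_{n+k}=1$. With these weights the divisibility hypothesis $a_{n+i}\mid r_i$ holds trivially, and $\bP(1^{n+1},1^k)=\bP^{n+k}$. Theorem \ref{thm:general-case-weighted-ci} therefore shows that a general Fano complete intersection $X\subset\bP^{n+k}$ of type $(r_1,\dots,r_k)$ is K-polystable, and K-stable whenever $\Aut(X)$ is finite. It remains to show that the numerical hypotheses force $\Aut(X)$ to be finite for general $X$.

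By Proposition \ref{prop:smoothness-of-wted-ci}, or simply Bertini, a general such $X$ is smooth.

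If $n\ge 2$, the assumption $r_i\ge 2$ for all $i$ puts us in the setting of the theorem of \cite{LW84,benoist2013separation}. That theorem gives finiteness of $\Aut(X)$ unless $X$ is a K3 surface or a quadric.
\begin{itemize}
\item $X$ is not a K3 surface, since $X$ is Fano.
\item $X$ is not a quadric hypersurface: since $k\ge 2$ and every $r_i\ge 2$, the degree of $X$ is $\prod r_i\ge 4$. The extra hypothesis that $k\ge 2$ when $\min r_i=2$ is exactly what excludes the case $k=1$, $r_1=2$.
\end{itemize}
When $k=1$ and $r_1\ge 3$, we can instead cite \cite{MM63} directly, or \cite[Theorem 1.3]{autsmthWCI}. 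In every case $\Aut(X)$ is finite, so $X$ is K-stable by the remark following the definition of K-stability, which rests on \cite{blum2019uniqueness}.

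The main obstacle is the case $n=1$. A smooth Fano curve is $\bP^1$, which has infinite automorphism group and is not K-stable. So the corollary must really be read for $n\ge 2$, or the numerical hypotheses must exclude curves. Here they do: for $n=1$ the Fano condition $\sum r_i\le k+1$ together with $r_i\ge 2$ gives $2k\le k+1$, hence $k=1$ and $r_1=2$, which the hypothesis excludes. With that observation the proof is complete.
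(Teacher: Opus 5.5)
Your proposal is correct and follows the paper's own route: specialize Theorem \ref{thm:general-case-weighted-ci} to $a_{n+1}=\dots=a_{n+k}=1$, then use the finiteness of $\Aut(X)$ for smooth complete intersections (\cite{LW84,benoist2013separation}) to upgrade K-polystability to K-stability. You also check that the hypotheses make the case $n=1$ vacuous (they force $k=1$, $r_1=2$); the paper leaves this implicit, and it is worth stating since $\bP^1$ is not K-stable.
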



Finally, we conclude with several corollaries on K-stability of certain complete intersections.  The first is known by \cite{AGP06} but we provide an algebraic proof using the induction above.

\begin{corollary}\label{cor:int-of-2-quadrics}
    Let $X \subset \bP^{n+2}$ be any smooth Fano complete intersection of two quadrics, $n \ge 2$.  Then, $X$ is K-stable.
\end{corollary}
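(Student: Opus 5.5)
Every smooth complete intersection of two quadrics in $\bP^{n+2}$ can be put in a simultaneously diagonal form. The plan is to use this normal form to exhibit $X$ as a double cover of a smooth quadric branched along a smooth quadric section, and then apply Theorem \ref{thm:general-case-induction} with $k=2$, $a_{n+1}=a_{n+2}=1$, $r_1=r_2=2$.

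\textbf{Normal form.} By the classical theory of pencils of quadrics (Segre/Weierstrass), smoothness of $X=Q_1\cap Q_2$ means the pencil has $n+3$ distinct degenerate members. After a change of coordinates,
\[ Q_1=\Big(\sum_{i=0}^{n+2} x_i^2=0\Big), \qquad Q_2=\Big(\sum_{i=0}^{n+2}\lambda_i x_i^2=0\Big), \]
with the $\lambda_i$ pairwise distinct. Replacing $Q_2$ by $Q_2-\lambda_{n+2}Q_1$ gives the form of Theorem \ref{thm:general-case-induction}:
\[ f_1=\sum_{i=0}^{n+2} x_i^2, \qquad f_2+x_{n+2}^2 \ \text{ with } \ f_2=\sum_{i=0}^{n+1}\mu_i x_i^2 \ \text{ after rescaling}, \]
where the $\mu_i=\lambda_i-\lambda_{n+2}$ are nonzero and pairwise distinct. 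I expect the main bookkeeping obstacle to be here: $f_1$ still involves $x_{n+2}$, whereas the theorem needs $f_1\in\bC[x_0,\dots,x_{n+1}]$. I would fix this by using the pencil to eliminate $x_{n+2}^2$ from one equation instead. Take the degenerate member $Q_2-\lambda_{n+2}Q_1$, which does not involve $x_{n+2}$, as $f_1$. For the second equation take $Q_1$, whose $x_{n+2}^2$ term can be isolated, so that $f_2=\sum_{i\le n+1}x_i^2$.

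\textbf{Identifying $Y$ and $B_Y$.} With this choice, $Y=(f_1=0)\subset\bP^{n+1}$ is the quadric $\sum_{i\le n+1}\mu_i x_i^2=0$. It is smooth because all $\mu_i\neq0$. Then $B_Y=Y\cap(\sum_{i\le n+1} x_i^2=0)$ is a complete intersection of two simultaneously diagonal quadrics with distinct ratios $\mu_i$, hence smooth of dimension $n-1$. The map forgetting $x_{n+2}$ exhibits $X$ as the double cover of $Y$ branched over $B_Y$.

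\textbf{Checking the hypotheses.} $Y$ is a smooth quadric, so it is K-polystable (Corollary \ref{cor:general-hypersurface}). The hypotheses on $B_Y$ split into cases.
\begin{itemize}
\item $n-1\ge 2$: $B_Y$ is Fano, and I proceed by induction on $n$, with this corollary applied to $B_Y\subset\bP^{n+1}$, giving that $B_Y$ is K-stable.
\item $n=2$: $B_Y$ is a smooth elliptic curve and not Fano. The relevant coefficient is $\iota_Y/r_2 = n/2 = 1$, and $(Y,B_Y)$ is log canonical since $Y$ and $B_Y$ are smooth. Equivalently, I could treat the base case $n=2$ (the quartic del Pezzo surface) directly as known.
\item $n=3$: $B_Y$ is a del Pezzo surface of degree $4$, which is K-stable by the $n=2$ case of the induction.
\end{itemize}
Theorem \ref{thm:general-case-induction} then gives K-polystability of $X$. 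Finally, $\Aut(X)$ is finite by \cite{LW84,benoist2013separation}, since $X$ is a smooth complete intersection with all $r_i\ge2$ that is neither a K3 surface nor a quadric. Hence $X$ is K-stable. The induction runs for all $n\ge2$ because in each case the branch locus is again a smooth intersection of two quadrics in diagonal form.
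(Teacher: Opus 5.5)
Your proposal is correct and is essentially the paper's own proof. It uses the same simultaneous diagonalization, with the degenerate member of the pencil (free of $x_{n+2}$) taken as the first equation, so that $Y$ is a smooth quadric and $B_Y$ is a smooth intersection of two quadrics one dimension lower. It then applies Theorem~\ref{thm:general-case-induction}, inducts on $n$, and finishes with finiteness of $\Aut(X)$.

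The only difference is the base case $n=2$, which the paper simply quotes from \cite{MM90}. Your alternative via the non-Fano (log canonical, coefficient $1$) branch of Theorem~\ref{thm:general-case-induction} also works. The GIT input it needs is that the elliptic curve $B_Y$ has finite stabilizer in $\Aut(Y)$. This holds because only finitely many translations of $B_Y$ preserve $\calO_{B_Y}(1)$, even though $\Aut(B_Y)$ itself is infinite and so the finiteness citation in Lemma~\ref{lem:GIT-of-(2,d)ci} does not literally apply.
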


\begin{proof}
    Observe that a smooth complete intersection of two quadrics in $\bP^4$ is K-stable by \cite{MM90}.  We now assume $n \ge 3$ and proceed by induction on $n$. 
    
    Let $Q_1, Q_2 \subset \bP^{n+2}$ be two quadric hypersurfaces and suppose $X = Q_1 \cap Q_2$ is smooth.  By simultaneous diagonalization, up to switching $Q_1$ and $Q_2$ and replacing $Q_2$ with a generic element of the pencil $\langle Q_1, Q_2 \rangle$, we may assume $Q_1 = (f_1 = 0)$ and $Q_2 = (f_2 = 0)$ where 
    \[ f_1 = \sum_{i=0}^{n+2} a_i x_i^2 \qquad f_2 = \sum_{i=0}^{n+2}  x_i^2 \]
    using the assumption that $X$ is smooth to assume $f_2$ has full rank.

    By replacing $f_1$ with $f_1 - a_{n+2} f_2$, we may assume $a_{n+2} = 0$.  The smoothness assumption on $X$ then implies $a_i \ne 0$ for $i \ne {n+2}$.  Write $f_2 = f_2' + \sum_{i=0}^{n+1} x_i^2$.  Then,      \begin{equation*}
        X\colon \begin{cases}
            f_1=0\\
            f_2' + x_{n+2}^2=0\\
            \end{cases} 
    \end{equation*}
    where $f_1, f_2' \in \mathbb{C} \left[ x_0, \dots , x_{n+1} \right]$.

    With the notation from Theorem \ref{thm:general-case-induction}, $Y$ is a smooth quadric in $\bP^{n+1}$ and $B_Y$ is a smooth complete intersection of two quadrics in $\bP^{n+1}$.  By Corollary \ref{cor:general-hypersurface}, $Y$ is K-polystable, and as $B_Y$ is a smooth complete intersection of two quadrics of lower dimension, by the inductive hypothesis, $B_Y$ is K-polystable.  Then, by Theorem \ref{thm:general-case-induction}, $X$ is K-polystable.  As $X$ has a finite automorphism group by \cite{LW84} or \cite[Theorem 3.1]{benoist2013separation}, it is K-stable. 
\end{proof}



We can also prove that complete intersections defined by certain nets of quadrics in $\bP^7$ are K-stable, bootstrapping from the fact that every smooth complete intersection of three quadrics in $\bP^6$ is K-stable by \cite{AZ-Seshadri}. 

\begin{theorem}\label{three-quadrics}
	Let $X$ be smooth complete intersection of three quadrics $X = Q_1' \cap Q_2' \cap Q_3' \subset \bP^7$.  If $X$ can be written as $X = Q_1 \cap Q_2 \cap Q_3$ for some $Q_1, Q_2, Q_3$ in the net of quadrics $\langle Q_1', Q_2', Q_3' \rangle$ such that $Q_1$ and $Q_2$ have a common singular point $p$, then $X$ is K-stable.
\end{theorem}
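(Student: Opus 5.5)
The plan is to put $X$ into the shape required by Theorem~\ref{thm:general-case-induction} with $n=4$, $k=3$ and all weights $a_{n+i}=1$, so that $X$ becomes a double cover of a complete intersection of two quadrics in $\bP^6$. The two inputs $Y$ and $B_Y$ are then smooth complete intersections of two and three quadrics in $\bP^6$, whose K-polystability is already known.

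\textbf{Normal form.} Choose coordinates $x_0,\dots,x_7$ on $\bP^7$ with $p=[0:\dots:0:1]$.
\begin{itemize}
\item Since $Q_1=(g_1=0)$ and $Q_2=(g_2=0)$ are singular at $p$, both $g_1$ and $g_2$ have no monomial involving $x_7$. Hence $g_1=f_1$ and $g_2=f_2$ with $f_1,f_2\in\bC[x_0,\dots,x_6]$.
\item Write $g_3=a x_7^2+x_7L+g$ with $L$ linear and $g$ quadratic in $x_0,\dots,x_6$.
\item I claim $a\neq 0$. Otherwise $p\in Q_3$, so $p\in X$. But $dg_1(p)=dg_2(p)=0$, so the Jacobian of $(g_1,g_2,g_3)$ at $p$ has rank at most $1<3$, and $X$ would be singular at $p$.
\item Rescale so that $a=1$, and substitute $x_7\mapsto x_7-L/2$. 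This changes neither $f_1$ nor $f_2$, and turns $g_3$ into $f_3+x_7^2$ with $f_3\in\bC[x_0,\dots,x_6]$.
\end{itemize}
Now $X$ has exactly the form of Theorem~\ref{thm:general-case-induction}. Here $Y=(f_1=f_2=0)\subset\bP^6$ is a fourfold complete intersection of two quadrics, and $B_Y=(f_1=f_2=f_3=0)\subset\bP^6$ is a threefold complete intersection of three quadrics. The variety $B_Y$ is Fano of index $7-6=1$, so we are in the case where $B_Y$ is Fano.

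\textbf{Smoothness of $Y$ and $B_Y$.} The projection $[x_0:\dots:x_7]\mapsto[x_0:\dots:x_6]$ is well defined on $X$, because $p\notin X$. It exhibits $X$ as the double cover of $Y$ branched along $B_Y$, given locally by $x_7^2=-f_3$.
\begin{itemize}
\item Smoothness of $X$ forces $Y$ to be smooth. Away from $B_Y$ the cover is étale. Along $B_Y$, the local equation $x_7^2+f_3=0$ over the germ of $Y$ is smooth only if $Y$ is smooth there and $f_3|_Y$ vanishes to order one.
\item The same local computation shows that $B_Y$ is smooth.
\item Regularity of the sequences is automatic from the dimension count. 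I expect this step, making the local analysis of the double cover rigorous (including the ramification), to be the only place needing care, but it is standard.
\end{itemize}

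\textbf{Conclusion.}
\begin{itemize}
\item $Y$ is K-stable by Corollary~\ref{cor:int-of-2-quadrics}, with $n=4\ge 2$.
\item $B_Y$ is K-stable by \cite[Theorem 5.1]{AZ-Seshadri}, since every smooth complete intersection of three quadrics in $\bP^6$ is K-stable.
\item Theorem~\ref{thm:general-case-induction} (Case~2) then gives that $X$ is K-polystable.
\item Since $X$ is a smooth complete intersection that is neither a K3 surface nor a quadric, $\Aut(X)$ is finite by \cite{LW84,benoist2013separation}. Hence $X$ is K-stable.
\end{itemize}
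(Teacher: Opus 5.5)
Your proposal is correct and follows essentially the same route as the paper. You normalize $p=[0:\dots:0:1]$, complete the square in $Q_3$ using $p\notin Q_3$, and deduce smoothness of $(f_1=f_2=0)$ and $(f_1=f_2=f_3=0)$ in $\bP^6$ from smoothness of $X$; you then combine Corollary~\ref{cor:int-of-2-quadrics}, \cite{AZ-Seshadri} and the double-cover result, with finiteness of $\Aut(X)$ upgrading K-polystability to K-stability. The differences are cosmetic: you skip the paper's preliminary rank lemma, which is not actually needed, and you cite Theorem~\ref{thm:general-case-induction} (Case 2), which here is just Theorem~\ref{thm:kstab-of-cover}(1).
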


We begin with a preliminary lemma. 

\begin{lemma}
	Let $X \subset \bP^n$ be a smooth complete intersection of three quadrics such that $X = Q_1 \cap Q_2 \cap Q_3$ and $Q_1, Q_2$ have a common singular point $p$.  Then, $p \notin Q_3$ and both $Q_1$ and $Q_2$ may be assumed to have rank $n-1$.
\end{lemma}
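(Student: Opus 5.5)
We argue directly with Jacobian criteria. Choose coordinates with $p = [0:\dots:0:1]$, so $x_n$ is the last coordinate. Since $p$ is singular on $Q_1$ and $Q_2$, the defining quadrics $f_1, f_2$ do not involve $x_n$: $f_1, f_2 \in \bC[x_0,\dots,x_{n-1}]$.

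\emph{First claim: $p \notin Q_3$.} Suppose $p \in Q_3$. Then $p \in X$. At $p$ the gradients of $f_1$ and $f_2$ vanish, so the Jacobian matrix of $(f_1,f_2,f_3)$ at $p$ has rank at most $1 < 3$. Hence $X$ is singular at $p$, a contradiction. So $f_3$ contains the monomial $x_n^2$ with nonzero coefficient.

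\emph{Second claim: the ranks of $Q_1$ and $Q_2$.} View $f_1, f_2$ as quadrics in $\bP^{n-1}$ with coordinates $x_0,\dots,x_{n-1}$, and write $C = (f_1 = f_2 = 0) \subset \bP^{n-1}$. The projection from $p$ restricts to a map $X \to C$. Completing the square in $f_3$ gives the normal form
\[ f_3 = x_n^2 + g(x_0,\dots,x_{n-1}), \]
after a coordinate change $x_n \mapsto x_n + \ell(x_0,\dots,x_{n-1})$, which fixes $f_1, f_2$. Thus $X \to C$ is the double cover branched along $C \cap (g=0)$. This is exactly the setting of Theorem \ref{thm:general-case-induction}.

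We next show that $C$ is smooth of codimension $2$. Take a point $q \in C$ at which $\nabla f_1, \nabla f_2$ (in $x_0,\dots,x_{n-1}$) are linearly dependent. The fiber of $X \to C$ over $q$ contains a point $\tilde q$, obtained by choosing $x_n$ with $x_n^2 = -g(q)$. The Jacobian of $(f_1, f_2, f_3)$ at $\tilde q$ has first two rows with zero $x_n$-entry and dependent remaining entries, so its rank is at most $2$. Then $X$ is singular at $\tilde q$, a contradiction. Hence $C$ is a smooth complete intersection.

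The singular locus of any quadric $aQ_1 + bQ_2$ in the pencil contains $p$. Smoothness of $C$ forces every member of the pencil, viewed in $\bP^{n-1}$, to have corank at most $1$ there. Indeed, a corank $\geq 2$ member would have singular locus meeting the other quadric of the pencil, giving a singular point of $C$. In particular, for a general member of the pencil $\langle f_1, f_2\rangle$ the restriction to $\bP^{n-1}$ has full rank $n$, since a smooth pencil has a smooth general member. Then:
\begin{itemize}
\item replacing $Q_1, Q_2$ by general members $Q_1', Q_2'$ of the pencil still cuts out the same $X$ together with $Q_3$;
\item the common singular point $p$ is preserved;
\item each of $Q_1', Q_2'$ now has singular locus exactly $\{p\}$ in $\bP^n$, that is, rank $n$ as an $(n+1)\times(n+1)$ matrix.
\end{itemize}

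The main obstacle is matching the rank convention: "rank $n-1$" in the statement. I would interpret the index as counting variables $x_0,\dots,x_{n-1}$, so that corank $1$ means rank $n$ on $\bC^{n+1}$, and check this against the paper's conventions. If the statement is literal, I would try to show that the pencil can be chosen to contain degenerate members of one further corank, using the discriminant $\det(aA_1+bA_2)$ on $\bP^{n-1}$. This determinant has degree $n$ in $(a:b)$ and distinct roots by smoothness of $C$, so it has roots. Picking $Q_1, Q_2$ as two distinct roots gives quadrics each of corank exactly $2$ in $\bP^n$, that is, rank $n-1$: each is singular along a line through $p$. These are still in the net and still share the singular point $p$. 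I expect this second interpretation is the intended one, and I would verify it via the distinct-roots argument.
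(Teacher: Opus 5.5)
Your proof is correct for the statement as the paper means it, but it reaches the rank assertion by a different route. The first claim, $p\notin Q_3$ by the Jacobian criterion, is the same as the paper's. For the rank, the paper simultaneously diagonalizes $f_1,f_2$ in $x_0,\dots,x_{n-1}$. It then argues that if no member of the pencil has vertex exactly $p$, some $x_i$ is missing from both equations, so $Q_1,Q_2$ are singular along a line through $p$, and that line meets $Q_3$ in singular points of $X$.

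You instead first show that $C=\overline{Q}_1\cap\overline{Q}_2\subset\bP^{n-1}$ is smooth of codimension two, which the paper only proves later, inside the proof of Theorem~\ref{three-quadrics}. You then get a nondegenerate member of the pencil from Bertini. This is the more robust argument. A pencil of quadratic forms need not be simultaneously diagonalizable, and if every member is degenerate there need not be a common kernel vector (e.g.\ the pencil spanned by $x_0x_1$ and $x_0x_2$ in three variables). So the paper's reduction to the case $a_i=b_i=0$ rests on a normal form that itself needs the kind of nondegeneracy being proved; your route uses no normal form. You should still write out the slogan ``a smooth pencil has a smooth general member''. By Bertini the general member is smooth off the base locus $C$. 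At each $q\in C$ one has $a\nabla f_1(q)+b\nabla f_2(q)\neq 0$ for all $(a,b)\neq(0,0)$, because $C$ is smooth of codimension two there. The corank-at-most-one paragraph is not needed for this.

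On the convention, your first reading is the intended one, and you should drop the closing switch. The paper's ``rank $n-1$'' means a cone with vertex exactly $p$ over a smooth quadric in $\bP^{n-1}$, i.e.\ matrix rank $n$. The evidence:
\begin{enumerate}
\item its diagonal form has the $n$ coefficients $a_0,\dots,a_{n-1}$;
\item it speaks of the \emph{general} member of the pencil;
\item the proof of Theorem~\ref{three-quadrics} invokes the lemma to say that $f_1,f_2$ define smooth quadrics in $\bP^{n-1}$.
\end{enumerate}
Choosing $Q_1,Q_2$ at roots of the discriminant produces the opposite, namely quadrics that are singular in $\bP^{n-1}$. That literal variant is in fact true, but your argument for it rests on the unproved claim that the discriminant has distinct roots. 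You would need at least two distinct roots, and that requires the classical argument: at a root of multiplicity at least $2$ with one-dimensional kernel, the kernel vector is isotropic for a nondegenerate member of the pencil, so it is a singular point of $C$.
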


\begin{proof}
	If $p \in Q_3$, then $X$ would be singular by the Jacobian criterion, which contradicts the smoothness assumption.  Without loss of generality, we may replace $Q_1$ and $Q_2$ with any linearly independent elements of the pencil $\langle Q_1, Q_2\rangle $, all of which are necessarily singular at $p$.  We claim the smoothness assumption on $X$ forces the general element of this pencil to be a quadric of rank $n-1$.  Indeed, we may simultaneously diagonalize $Q_1, Q_2$ so they are given by equations of the form 
	\[ Q_1 = (a_0x_0^2 + \dots + a_{n-1}x_{n-1}^2 = 0)\]
	\[ Q_2 = (b_0x_0^2 + \dots + b_{n-1}x_{n-1}^2 = 0)\]
	where the coefficient of $x_n^2$ vanishes by assumption that $Q_1, Q_2$ are singular.  The only way every element of this pencil has rank $< n-1$ is if there exists some $0 \le i \le n-1$ such that $a_i = b_i  = 0$.  Without loss of generality, assume $i = n-1$.  Then, this implies $Q_1, Q_2$ and every element of the pencil they span are singular along the curve $x_0 = \dots = x_{n-2} = 0$, and in particular the intersection $Q_1 \cap Q_2$ is singular there.  Intersecting with $Q_3$, we see that $Q_1 \cap Q_2 \cap Q_3$ is singular along the intersection points of $Q_3$ with $x_0 = \dots = x_{n-2} = 0$.  
\end{proof}

Now, we prove the theorem. 

\begin{proof}
	Write $X = Q_1 \cap Q_2 \cap Q_3 \subset \bP^n$ where $Q_1, Q_2$ have a common singular point $p$.  By the previous lemma, we have $p \notin Q_3$ and may assume $Q_1, Q_2$ have rank $n-1$. 
	
	Without loss of generality, assume $p = [0:0: \dots : 0: 1]$.  Then, by completing the square in the equation of $Q_1$, we may write 
	\[ Q_1 = (f_1(x_0, \dots, x_{n-1}) = 0)\]
	\[ Q_2 = (f_2(x_0, \dots, x_{n-1}) = 0)\]
	\[ Q_3 = (f_3(x_0, \dots, x_{n-1}) + x_n^2 = 0)\]
	where $f_1, f_2$ define smooth quadrics in $\bP^{n-1}$ by assumption.  Denote by $\overline{Q}_i = (f_i = 0) \subset \bP^{n-1}$.
	
	If $\overline{Q}_1 \cap \overline{Q}_2$ were singular, then $Q_1 \cap Q_2$ (the cone over $\overline{Q}_1 \cap \overline{Q}_2$) would be singular along a curve, which would contradict the smoothness of $X = Q_1 \cap Q_2 \cap Q_3$.  Therefore, $\overline{Q}_1 \cap \overline{Q}_2$ is smooth, and hence K-stable by Corollary \ref{cor:int-of-2-quadrics}.   Similarly, the complete intersection $\overline{Q}_1 \cap \overline{Q}_2 \cap \overline{Q}_3 \subset \bP^6$ must be smooth (if $[x_0: \dots: x_{n-1}]$ were a singular point of this intersection, then $[x_0: \dots :x_{n-1}:0]$ would be a singular point of $X$, contradicting its smoothness), so by \cite{AZ-Seshadri}, is K-stable.  Because $X$ is a $2:1$ cover of the smooth complete intersection $\overline{Q}_1 \cap \overline{Q}_2$ branched over the smooth variety $\overline{Q}_1 \cap \overline{Q}_2 \cap \overline{Q}_3$, by Theorem \ref{thm:kstab-of-cover}, $X$ is K-stable. 
\end{proof}

While we do not show that \textit{every} smooth complete intersection of three quadrics in $\bP^7$ is K-stable, the previous result is inductive in the following sense: from the K-stability of every smooth complete intersection of three quadrics in $\bP^{n}$, the argument above shows that any complete intersection of three quadrics in $\bP^{n+1}$ of the form as in Theorem \ref{three-quadrics} is K-stable.

\bibliography{ref}
\bibliographystyle{alpha}

\end{document}